\numberwithin{theorem}{section}
\numberwithin{equation}{section}
\newcommand{\burg}{{\sf b}}
\newcommand\R{\mathbb{R}}
\newcommand\Rg{\mathcal{R}}
\newcommand\T{\mathcal{T}}
\newcommand\N{\mathbb{N}}
\newcommand\Z{\mathbb{Z}}
\newcommand\C{\mathbb{C}}
\newcommand\E{\mathcal{E}}
\newcommand\del{\delta}
\newcommand{\<}{\langle}
\renewcommand{\>}{\rangle}
\newcommand{\ulin}{u^{\rm lin}}
\newcommand{\Mrefine}{\mathcal{M}_{\rm r}}
\def\Xint#1{\mathchoice
{\XXint\displaystyle\textstyle{#1}}%
{\XXint\textstyle\scriptstyle{#1}}%
{\XXint\scriptstyle\scriptscriptstyle{#1}}%
{\XXint\scriptscriptstyle\scriptscriptstyle{#1}}%
\!\int}
\def\XXint#1#2#3{{\setbox0=\hbox{$#1{#2#3}{\int}$ }
\vcenter{\hbox{$#2#3$ }}\kern-.6\wd0}}
\def\mint{\Xint-}
\def\diam{{\rm diam}}
\def\diam{{\rm diam}}
\def\R{\mathbb{R}}
\def\N{\mathbb{N}}
\def\Z{\mathbb{Z}}
\def\C{\mathbb{C}}
\def\DD{{D}'}
\def\dx{\,{\rm d}x}
\def\dt{\,{\rm d}t}
\def\<{\langle}
\def\>{\rangle}
\def\mA{{\sf A}}
\def\del{\delta}
\def\a{{\rm a}}
\def\c{{\rm c}}
\def\h{{\rm h}}
\def\a{{\rm a}}
\def\c{{\rm c}}
\def\L{\Lambda}
\def\Us{\mathscr{U}}
\def\Usz{\Us_0}
\def\Ush{\dot{\Us}^{1,2}}
\def\E{\mathscr{E}}
\def\Eb{\E^{\rm b}}
\def\RO{\mathcal{R}}
\def\L{\Lambda}
\def\rcut{r_{\rm cut}}
\def\Rg{\mathscr{R}}
\def\T{\mathcal{T}}
\def\Th{\mathcal{T}_h}
\def\Ih{I_h}
\def\vor{\rm vor}
\begin{document}

\title[A Posteriori Error Estimates for Adaptive QM/MM]{A Posteriori Error Estimates for Adaptive QM/MM Coupling Methods}

\author{Yangshuai Wang}
\address{Yangshuai Wang\\
Institute of Natural Sciences, School of Mathematical Sciences, and MOE-LSC\\
Shanghai Jiao Tong University\\
Shanghai\\
China}
\email{yswang2016@sjtu.edu.cn}

\author{Huajie Chen}
\address{Huajie Chen\\
School of Mathematical Sciences\\
Beijing Normal University\\
Beijing\\
China}
\email{chen.huajie@bnu.edu.cn}

\author{Mingjie Liao}
\address{Mingjie Liao\\
Institute of Natural Sciences, School of Mathematical Sciences, and MOE-LSC\\
Shanghai Jiao Tong University\\
Shanghai\\
China}
\email{mingjieliao@sjtu.edu.cn}

\author{Christoph Ortner}
\address{Christoph Ortner\\
Mathematics Institute\\
University of Warwick\\
Coventry CV4 7AL\\
UK}
\email{c.ortner@warwick.ac.uk}

\author{Hao Wang}
\address{Hao Wang\\
School of Mathematics\\
Sichuan University\\
Sichuan\\
China}
\email{wangh@scu.edu.cn}

\author{Lei Zhang}
\address{Lei Zhang\\
Institute of Natural Sciences, School of Mathematical Sciences, and MOE-LSC\\
Shanghai Jiao Tong University\\
Shanghai\\
China}
\email{lzhang2012@sjtu.edu.cn}

\thanks{YW, ML, and LZ are partially supported by National Natural Science Foundation of China 
(NSFC 11871339, 11861131004). HC is supported by National Natural Science 
Foundation of China (NSFC 11971066) and the National Key Research and 
Development Program of China (2019YFA0709601). CO is supported by EPSRC Grant 
EP/R043612/1 and by the Leverhulme Trust under Grant RPG-2017-191. CO and LZ are 
further supported by the SJTU-Warwick Joint Seed Fund 2019/20. HW is 
supported by National Natural Science Foundation of China (NSFC 11971336, 
11501389).}

\begin{abstract}
Hybrid quantum/molecular mechanics models (QM/MM methods) are widely used in material and molecular simulations when MM models do not provide sufficient accuracy but pure QM models are computationally prohibitive. Adaptive QM/MM coupling methods feature on-the-fly classification of atoms during the simulation, allowing the QM and MM subsystems to be updated as needed. In this work, we propose such an adaptive QM/MM method for material defect simulations based on a new residual based {\it a posteriori} error estimator, which provides both lower and upper bounds for the true error. We validate the analysis and illustrate the effectiveness of the new scheme on numerical simulations for material defects.
\end{abstract}

\subjclass[2010]{65N12, 65N15, 65Q10, 65Z05}
\keywords{QM/MM coupling; A posteriori error estimate; Adaptive algorithm; Crystal defects}

\maketitle

\section{Introduction}
\label{sec:introduction}
Quantum mechanics and molecular mechanics (QM/MM) coupling methods have been widely used for simulations of large systems in materials science and biology \cite{bernstein09, csanyi04, gao02, kermode08, ogata01, zhang12}.
In QM/MM simulations, the computational domain is partitioned into QM and MM regions. The region of primary interest (e.g., a material defect) is described by a QM model, which is embedded in an ambient environment (e.g., bulk crystal) that is described by an MM model.
In this manner, QM/MM methods can in principle combine the accuracy of a quantum mechanical description with the efficiency of classical molecular mechanics.

A fundamental challenge for QM/MM methods is how to optimally assign each atom to QM or MM subsystems so that a (quasi-)optimal balance between accuracy and efficiency can be achieved.
{\it A priori} choices, even when they are feasible, typically lead to sub-optimal distribution of computational resources.
Adaptive QM/MM coupling methods offer an automatic partition of QM/MM subsystems on the fly according to some error indicators during the simulation process. In addition to the optimisation of computational cost, this allows an adaption to {\em moving} regions of interest.
Adaptive QM/MM methods have been proposed, e.g., for the study of molecular fragments in macromolecules, monitoring molecules entering/leaving binding sites and tracking proton transfer via the Grotthuss mechanism (see \cite{duster17} and references therein).
Most adaptive QM/MM methods are for solute-solvent system, and are based on different (heuristic) criteria, such as distance to active sites \cite{heyden2007, kerdcharoen1996, watanabe2014};  Hamiltonian conservation \cite{boereboom2016};  density based adaptivity \cite{waller2014}; number adaptivity \cite{Tekenaka:2012}; local atomic stress \cite{Glukhova:2014}.
For materials with defects, \cite{csanyi04, kermode08} propose the criterion of distance to defect. Closely related ideas can be found in the quasi-continuum method for density-functional theory~\cite{Gavini:2007, Ponga:2016}.

The various {\it a posteriori} error estimators proposed in \cite{boereboom2016, heyden2007, Glukhova:2014, kerdcharoen1996, waller2014, watanabe2014} provide not only estimates and theoretical bounds for the solution error in a specified metric, but also naturally lead to a QM/MM partitioning criterion of the atomic sites.

Inspired by classical adaptive finite element methods~\cite{Verfurth:1996a, Dorfler1996, Wolfgang:2013}, \cite{CMAME} introduced the idea of using {\it a posteriori} error indicators for the QM/MM model residual. Using a weighted $\ell^2$-norm on the QM/MM force error leads to a simple and practical scheme, but makes it impossible to guarantee lower bounds on the error, which is important to guarantee the efficiency of the algorithm. There have also been investigations of related {\it a posteriori} error estimates for atomistic/continuum (A/C) coupling methods, which share many similarities~\cite{arndtluskin07c,Ortner:qnl.1d,OrtnerWang:2014,prud06, oden08, Shenoy:1999a, Wang:2017, Liao2018, HW_SY_2018_Efficiency_A_Post_1D}.


In the present work, we construct a reliable and efficient {\it a posteriori} error estimator based on a natural {\em dual norm} of the model residual. The dual norm is itself not computable since it requires the evaluation of the residual forces as well as the solution of an auxiliary Poisson problem on the whole space. We therefore construct a computable {\em approximate estimator} by truncating the Poisson problem to a finite domain and finite-dimensional approximation space. We then estimate the errors we committed in this additional step and demonstrate that this leads to a practical, yet still {\em reliable and efficient} estimator up to a ``data oscillation term''. We propose an adaptive QM/MM algorithm for material defects, based on this new estimator. Aside from providing both upper and lower bounds the new estimator moves us closer to our goal of a fully adaptive QM/MM scheme without requiring any {\it a priori} input from the user.

As a proof of concept, we will restrict ourselves to the tight binding model as the quantum mechanical model and a prototypical QM/MM model~\cite{chen15b}, as well as geometry equilibration problems (statics) of a defect in a homogeneous simple lattice crystal.

\subsubsection*{Outline}
In Section \ref{sec:pre}, we briefly describe the tight binding model, the variational formulation for the equilibration of crystalline defects, and the QM/MM coupling methods that we consider.
In Section \ref{sec:posteriori}, we construct the {\it a posteriori} error estimator based on a finite element approximation of the residual of the QM/MM solution and establish both lower and upper bounds of the approximation error.
In Section \ref{sec:algo}, we describe the adaptive QM/MM algorithm in detail, including the adaptive algorithm to control the approximation error of the approximate estimator.
In Section \ref{sec:numer}, we present several numerical examples of point defects and an edge dislocation by our adaptive algorithm. In Section \ref{sec:conclusion} we provide a summary and outlook.

\subsubsection*{Notation}
We use the symbol $\langle\cdot,\cdot\rangle$ to denote an abstract duality
pair between a Banach space and its dual space. The symbol $|\cdot|$ normally
denotes the Euclidean or Frobenius norm, while $\|\cdot\|$ denotes an operator
norm.
For second order tensors $A$ and $B$, we denote $A:B = \sum_{i,j}A_{ij}B_{ij}$ and $A\otimes B$ the standard kronecker product.
For the sake of brevity, we will denote $A\backslash\{a\}$ by
$A\backslash a$, and $\{b-a~\vert ~b\in A\}$ by $A-a$.
For functional $E \in C^2(X)$, the first and second variations are denoted by
$\<\delta E(u), v\>$ and $\<\delta^2 E(u) v, w\>$ for $u,v,w\in X$, respectively.
For a finite set $A$, we will use $\#A$ to denote the cardinality of $A$.
The closed ball with radius $r$ and center $x$ is denoted by $B_r(x)$.
The symbol $C$ (or $c$) denotes generic positive constant that may change from one line
of an estimate to the next. When estimating rates of decay or convergence, $C$
will always remain independent of approximation parameters such as the system size, the configuration of the lattice and the test functions. The dependence of $C$ will be clear from the context or stated explicitly. To further simplify notation we will often write $\lesssim$ to mean $\leq C$ as well as $\eqsim$ to mean both $\lesssim$ and $\gtrsim$.
We use the standard definitions and notations $L^p$, $W^{k,p}$, $H^k$ for Lebesgue and Sobolev spaces. In addition we define the homogeneous Sobolev spaces
$
	\dot{H}^k(\Omega) := \big\{ f \in H^k_{\rm loc}(\Omega) \,|\,
										\nabla^k f \in L^2(\Omega) \big\}.
$

\clearpage

\section{QM/MM Coupling for Crystalline Defects}
\label{sec:pre}
\setcounter{equation}{0}

\subsection{The tight binding model}
\label{sec:tb}

\def\Rc{R_{\rm cut}}
\def\Nn{N}

Consider a many-particle system consisting of $\Nn$ atoms.
Let $d\in\{2,3\}$ be the space dimension and $\Pi\subset\R^d$ be an {\it index set}  (or a {\it reference configuration}), with $\#\Pi=\Nn$.
An atomic configuration is a map $y : \Pi\to\R^d$ satisfying
\begin{equation} \label{eq:non-interpenetration}
|y(\ell)-y(k)| \geq \mathfrak{m}|\ell-k| \qquad\forall~\ell,k\in\Pi
\end{equation}
with {\em accumulation parameter} $\mathfrak{m} > 0$. We will use $r_{\ell k}:=|y(\ell)-y(k)|$ for  brevity of notation.

For the sake of notational simplicity we restrict the presentation to orthogonal two-centre tight binding models \cite{goringe97,Papaconstantopoulos15}, with a single orbital per atom. All results and algorithms can be extended directly to general linear and some nonlinear (self-consistent) tight binding models, using the techniques described in \cite[\S~2 and Appendix A]{chen15a} and in~\cite{Thomas2020nl}.

Our model is formulated in terms of a discrete Hamiltonian with the matrix elements
\begin{eqnarray}\label{tb-H-elements}
\Big(\mathcal{H}(y)\Big)_{\ell k}
=\left\{ \begin{array}{ll} 
h_{\rm ons}\left(\sum_{j\neq \ell}
\varrho\big(|y({\ell})-y(j)|\big)\right),
& {\rm if}~\ell=k, \\[1ex]
h_{\rm hop}\big(|y(\ell)-y(k)|\big), & {\rm if}~\ell\neq k,
\end{array} \right.
\end{eqnarray}
in which $h_{\rm ons} \in C^{\mathfrak{n}}([0, \infty))$ is the on-site term,
$\varrho \in C^{\mathfrak{n}}([0, \infty))$ represents the charge density
with $\varrho(r) = 0~\forall r\in[\Rc,\infty)$ where $\Rc>0$ stands for the cutoff radius, and
$h_{\rm hop} \in C^{\mathfrak{n}}([0, \infty))$ is the hopping term with
$h_{\rm hop}(r)=0~\forall r\in[\Rc,\infty)$.
We assume throughout that $\mathfrak{n}\geq 4$.

Let $\psi_s, \varepsilon_s$, $s=1,2,\cdots,N$, be the solutions of the eigenvalue problem $\mathcal{H}(y)\psi_s = \varepsilon_s\psi_s, \|\psi_s\| = 1,$ then we define the {\em band energy} to be 
\begin{eqnarray}\label{e-band}
E^\Pi(y)= {\textstyle \sum_{s=1}^N} \mathbf{f}(\varepsilon_s)\varepsilon_s,
\end{eqnarray}
where $\mathbf{f}(\varepsilon) = ( 1 + e^{\beta (\varepsilon-\mu)} )^{-1}$ is the Fermi-Dirac distribution function for the energy states of a particle system obeying the Pauli exclusion principle. The inverse Fermi-temperature, $\beta$, and the chemical potential, $\mu$, are fixed throughout (see \cite{chen16} for a rigorous justification of this choice).
%

The starting point for the QM/MM we discuss below is a spatial partition of the energy~\cite{finnis03},
\begin{eqnarray}\label{E-El}
E^\Pi(y)=\sum_{\ell\in\Pi} E_{\ell}^{\Pi}(y)
\qquad{\rm with}\qquad
E_{\ell}^\Pi(y) := \sum_{s}\mathbf{f}(\varepsilon_s)\varepsilon_s
\left|[\psi_s]_{\ell}\right|^2,
\end{eqnarray}
which formally defines a site energy $E_{\ell}^{\Pi}(y)$ and provides a connection between the tight-binding model and classical interatomic potentials (molecular mechanics). To make this connection quantitative we now review the {\it regularity and locality} results for $E_{\ell}^\Pi$ from \cite{chen15a}:
Suppose $\L$ is a countable index set or reference configuration and $\Pi\subset\Lambda$ is a finite subset.
We denote by $E_\ell^\Pi$ the site energy with respect to the subsystem $\Pi \subset \Lambda$.
For a continuous domain $A \subset \R^d$, we use the short-hand $E_\ell^{A} := E_\ell^{A \cap \Lambda}$.
%
The following lemma from \cite[Theorem 3.1 (i)]{chen15a} implies the existence of the {\it thermodynamic} limit of $E_\ell^{\Pi}$ as $\Pi \uparrow \Lambda$ and guarantees that $E_{\ell}^{\Pi}$ defined in \eqref{E-El} can be taken as a proper (approximate) site energy.

\begin{lemma}\label{lemma-thermodynamic-limit}
If $y:\L\rightarrow\R^d$ is a configuration satisfying \eqref{eq:non-interpenetration}, then,
	\begin{itemize}
		\item[(i)] {\rm (regularity and locality of the site energy)}
		$E^{\Pi}_{\ell}(y)$ possesses $j$-th order partial derivatives with
		$1 \leq j \leq \mathfrak{n}-1$, and there exist positive constants $C_j$ and $\eta_j$ such that
		\begin{eqnarray}\label{site-locality-tdl}
		\left|\frac{\partial^j E^{\Pi}_{\ell}(y)}{\partial [y(m_1)]_{i_1}
			\cdots\partial [y(m_j)]_{i_j}}\right|
		\leq C_j e^{-\eta_j\sum_{l=1}^j|y(\ell)-y(m_l)|}
		\end{eqnarray}
		with $m_k\in\Pi$ and $1\leq i_k\leq d$ for any $1\leq k\leq j$;
		\item[(ii)] {\rm (thermodynamic limit)}
		$\displaystyle E_{\ell}(y):=\lim_{R\rightarrow\infty} E^{B_R(\ell)}_{\ell}(y)$
		exists and satisfies (i).
	\end{itemize}
\end{lemma}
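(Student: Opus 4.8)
The plan is to represent the site energy as a contour integral of the resolvent of $\Ham(y)$ and then to exploit the exponential off-diagonal decay of that resolvent (a Combes--Thomas estimate). Since
\[
E_\ell^\Pi(y) = \sum_s \mathbf{f}(\varepsilon_s)\,\varepsilon_s\,\big|[\psi_s]_\ell\big|^2 = \big[g(\Ham(y))\big]_{\ell\ell}, \qquad g(\varepsilon):=\mathbf{f}(\varepsilon)\,\varepsilon,
\]
the holomorphic functional calculus gives
\[
E_\ell^\Pi(y) = \frac{1}{2\pi\mathrm{i}}\oint_{\mathscr{C}} g(z)\,\big[(z-\Ham(y))^{-1}\big]_{\ell\ell}\dz,
\]
where $\mathscr{C}\subset\C$ is a contour encircling $\sigma(\Ham(y))$ once while avoiding the poles of $\mathbf{f}$ (the Matsubara points $\mu+\mathrm{i}\pi(2k+1)/\beta$, $k\in\Z$). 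The non-interpenetration condition \eqref{eq:non-interpenetration} bounds the coordination number of each site uniformly, and, together with the continuity and compact support of $h_{\rm ons},h_{\rm hop},\varrho$, this bounds $\|\Ham(y)\|$ uniformly in $y$ and in $\Pi$; hence $\mathscr{C}$ may be fixed once and for all at a positive distance from every admissible spectrum.

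The analytic core is the off-diagonal decay estimate: abbreviating $G_z:=(z-\Ham(y))^{-1}$, for $z\in\mathscr{C}$ one should prove
\[
\big|[G_z]_{\ell k}\big| \le C\,e^{-\gamma\, r_{\ell k}},
\]
with $C,\gamma>0$ depending only on $\dis{z}{\sigma(\Ham(y))}$, on $\mathfrak{m}$, and on the model constants. The Combes--Thomas argument establishes this by conjugating $G_z$ with the multiplication operator $e^{\gamma|y(\cdot)-y(k)|}$: the conjugated operator is $(z-\Ham_\gamma)^{-1}$ with $\Ham_\gamma=\Ham(y)+O(\gamma)$ a uniformly bounded perturbation (the entries of $\Ham(y)$ are supported within $\Rc$, so the conjugation multiplies them by factors $e^{O(\gamma)}$), whence for $\gamma$ small relative to $\dis{z}{\sigma(\Ham(y))}$ it remains bounded; reading off its $(\ell k)$ entry yields the stated exponential factor.

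For the regularity and locality in (i) I would differentiate the contour representation under the integral sign, using
\[
\frac{\partial G_z}{\partial[y(m)]_i}= G_z\,\frac{\partial\Ham(y)}{\partial[y(m)]_i}\,G_z
\]
and iterating: the $j$-th derivative of $G_z$ expands into a finite sum of products of resolvents $G_z$ separated by partial derivatives of $\Ham(y)$ whose orders sum to $j$. Each such derivative factor is supported, with uniformly bounded entries, on sites within the cutoff $\Rc$ of the differentiation points, since $h_{\rm hop},\varrho$ vanish beyond $\Rc$ and $h_{\rm ons},h_{\rm hop},\varrho\in C^{\mathfrak{n}}$. Inserting the Combes--Thomas bound for every resolvent factor and summing over the intermediate lattice indices telescopes the pairwise exponentials into $e^{-\eta_j\sum_{l}|y(\ell)-y(m_l)|}$; the nested lattice sums converge because \eqref{eq:non-interpenetration} bounds the number of sites in any ball by a polynomial in its radius, which the exponential decay dominates. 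Integrating over the fixed bounded contour $\mathscr{C}$ is then harmless and produces \eqref{site-locality-tdl}, the admissible range $1\le j\le\mathfrak{n}-1$ reflecting the $C^{\mathfrak{n}}$ regularity of the radial functions defining $\Ham$.

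For the thermodynamic limit (ii) I would show that $\big(E_\ell^{B_R(\ell)}(y)\big)_{R}$ is Cauchy: comparing the Hamiltonians associated with $B_R(\ell)$ and $B_{R'}(\ell)$, $R'>R$, through the resolvent identity, the difference of their $(\ell\ell)$ resolvent entries is expressed through entries connecting $\ell$ to sites at distance $\ge R$, so the Combes--Thomas bound gives $\big|E_\ell^{B_{R'}(\ell)}(y)-E_\ell^{B_R(\ell)}(y)\big|\le C e^{-cR}$ after integrating over $\mathscr{C}$; hence $E_\ell(y)=\lim_{R\to\infty}E_\ell^{B_R(\ell)}(y)$ exists, and the same comparison applied to the derivatives, passed to the limit, shows that $E_\ell(y)$ inherits \eqref{site-locality-tdl}. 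The step I expect to be the main obstacle is the bookkeeping in the multi-derivative locality estimate: one must organise the sum over intermediate indices so that the products of pairwise exponentials collapse to exactly the rate $\sum_{l}|y(\ell)-y(m_l)|$ and not a weaker one, while checking that the nested lattice sums converge uniformly in $\Pi$. This is where \eqref{eq:non-interpenetration} and the uniform spectral bounds are used repeatedly, and obtaining a clean rate $\eta_j$ forces one to take $\gamma$ slightly below the optimal Combes--Thomas rate so as to absorb the combinatorial factors.
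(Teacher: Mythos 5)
The paper does not actually prove Lemma \ref{lemma-thermodynamic-limit}; it imports it verbatim from \cite[Theorem 3.1]{chen15a}, and the proof given there is precisely the argument you outline: the contour-integral (resolvent) representation of the site energy, a Combes--Thomas estimate for the off-diagonal decay of $(z-\mathcal{H}(y))^{-1}$ using the finite range of the hopping terms and the bound on coordination numbers coming from \eqref{eq:non-interpenetration}, differentiation under the integral sign to obtain \eqref{site-locality-tdl}, and a resolvent-identity comparison of the Hamiltonians on $B_R(\ell)$ and $B_{R'}(\ell)$ for the thermodynamic limit. Your proposal is therefore correct in outline and takes essentially the same route as the cited source, including the two genuinely delicate points you flag (telescoping the pairwise exponentials into the rate $\sum_l|y(\ell)-y(m_l)|$ at a slightly reduced exponent, and uniform convergence of the derivatives so that the limit in (ii) inherits the bounds).
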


\subsection{Variational model for crystalline defects}
\label{sec:defects}
\def\Rdef{R^{\rm def}}
\def\Rg{\mathcal{R}}
\def\Rgnn{\mathcal{N}}
\def\rcut{R_{\rm c}}
\def\Lhom{\L^{\rm hom}}
\def\Ddef{D^{\rm def}}
\def\Ldef{\L^{\rm def}}
\def\Rcore{R_{\rm DEF}}
\def\Adm{{\rm Adm}}
\def\E{\mathcal{E}}
\def\L{\Lambda}
\def\UsH{{\mathscr{U}}^{1,2}}
\def\Usz{{\mathscr{U}^{\rm c}}}
\def\DD{{\sf D}}
\def\ee{{\sf e}}
A rigorous framework for geometry equilibration of crystalline defects was developed in \cite{chen19,2013-defects}, which formulates the equilibrium of crystal defects as a variational problem in a discrete energy space and establishes qualitatively sharp far-field decay estimates for the corresponding equilibrium configuration. This framework will be the backbone of our rigorous {\it a posteriori} error analysis.

Given $d \in \{2, 3\}$, $\mA \in \R^{d \times d}$ non-singular,  $\Lhom := \mA \Z^d$ is the homogeneous reference lattice which represents a homogeneous crystal formed from identical atoms and possessing no defects. A reference lattice with a single defect in a localized defect core region is denoted by $\L \subset \R^d$. We assume the defect is contained within a ball $B_{\Rcore}$, $\Rcore> 0$; that is,
%
$\L \setminus B_{\Rcore} = \Lhom \setminus B_{\Rcore}.$
%
The deformed configuration of the infinite lattice $\L$ is a map $y: \L\rightarrow\R^d$ which we decompose into
\begin{eqnarray} \label{y-u}
y(\ell) = x_0(\ell) + u_0(\ell) + u(\ell) = y_0(\ell) + u(\ell),
\end{eqnarray}
with $x_0:\L\rightarrow\R^d,~ x_0(\ell)=\ell$, $u_0:\Lambda\rightarrow\mathbb{R}^d$ a predictor prescribing the far-field boundary condition, and $u:\Lambda\rightarrow\mathbb{R}^d$ the corrector.
We require that the configuration $y_0(\ell)$ is ``near equilibrium'' far from the defect core. For point defects we achieve this by simply taking $u_0=0$. The derivation of $u_0$ for straight dislocations is reviewed in \S~\ref{sec:deri_u0}; see also \cite{2013-defects} for further details.

For a subset $\Rg \subset \Lambda-\ell$, we
define $D_\Rg u(\ell) := (D_\rho u(\ell))_{\rho\in\Rg}$, $D_\rho u(\ell) := u(\ell+\rho) - u(\ell)$, and $Du(\ell) := D_{\Lambda-\ell} u(\ell)$.
For $\gamma > 0$ we then define the (semi-)norms
\begin{eqnarray*}
	\big|Du(\ell)\big|_\gamma := \bigg( \sum_{\rho \in \L-\ell} e^{-2\gamma|\rho|}
	\big|D_\rho u(\ell)\big|^2 \bigg)^{1/2}
	\quad{\rm and}\quad
	\| Du \|_{\ell^2_\gamma} := \bigg( \sum_{\ell \in \L}
	|Du(\ell)|_\gamma^2 \bigg)^{1/2}.
\end{eqnarray*}
All (semi-)norms $\|\cdot\|_{\ell^2_\gamma}, \gamma > 0,$ are equivalent~\cite{2012-lattint} (also \cite[Appendix A]{chen15b}).
We can therefore define the natural function space of finite-energy displacements,
\begin{displaymath}
\UsH := \big\{ u : \L \to \R^d, \| Du \|_{\ell^2_\gamma} < \infty \big\}.
\end{displaymath}
For a displacement $u \in \UsH$ with associated configuration $y_0+u$ satisfying the accumulation condition \eqref{eq:non-interpenetration},  we define the energy-difference functional
\begin{align}
\label{energy-difference}
\mathcal{E}(u) := \sum_{\ell\in\Lambda}\Big(E_{\ell}(y_0+u)-E_{\ell}(y_0)\Big).
\end{align}
It was shown in \cite[Theorem 2.7]{chen19} (see also \cite{2013-defects}) that,
if $\del\E(0) \in (\UsH)^*$, then $\E$ is well-defined on the space $\Adm_0$ and in fact
$\E \in C^{\mathfrak{n}-1}(\Adm_0)$, where
\begin{multline*}
\Adm_{\mathfrak{m}} := \big\{ u \in \UsH:
\big|\big(y_0(\ell)+u(\ell)\big)-\big(y_0(m)+u(m)\big)\big| > \mathfrak{m} |\ell-m| \quad\forall~  \ell, m \in \L \big\}. \quad
\end{multline*}
Due to the decay imposed by the condition $u\in\UsH$, any displacement $u\in\Adm_0$ belongs to $\Adm_{\mathfrak{m}}$ with some constant $\mathfrak{m}>0$ \cite{chen15a}.

We can now rigorously formulate the equilibration problem, which serves as our benchmark application for the remainder of the paper, 
\begin{equation}\label{eq:variational-problem}
\bar{u} \in \arg\min \big\{ \E(u), u \in \Adm_0 \big\},
\end{equation}
where ``$\arg\min$'' is understood as the set of local minima. One can generalise this model to include more general equilibria, in particular saddle points~\cite{2018-uniform}, but for the sake of simplicity we will restrict ourselves to minima.

\subsection{QM/MM Coupling}
\label{sec:qmmm}
\def\Adm{{\rm Adm}}
\def\E{\mathcal{E}}
\def\L{\Lambda}
\def\DD{{\sf D}}
\def\ee{{\sf e}}
\def\LQM{\Lambda^{\rm QM}}
\def\LMM{\Lambda^{\rm MM}}
\def\LFF{\Lambda^{\rm FF}}
\def\Lbuf{\Lambda^{\rm BUF}}
\def\OQM{\Omega^{\rm QM}}
\def\OMM{\Omega^{\rm MM}}
\def\OFF{\Omega^{\rm FF}}
\def\Obuf{\Omega^{\rm BUF}}
\def\RQM{R_{\rm QM}}
\def\RMM{R_{\rm MM}}
\def\RFF{R_{\rm FF}}
\def\Rbuf{R_{\rm BUF}}
\def\VMM{V^{\rm MM}}
\def\Vb{V^{\rcut}_{\#}}
\def\EH{\mathcal{E}^{\rm H}}
\def\uH{\^{u}^{\rm H}}
\def\wD{\widetilde{D}}
\def\AH{\Adm^{\rm H}_0}
\def\Usx{\mathscr{U}^{\rm H}}
\def\uH{\bar{u}^{\rm H}}
\def\oscf{\mathrm{osc}_{\T^0}^\omega(\hat{f})}
To construct computational models for the variational problem \eqref{eq:variational-problem} we will restruct ourselves to the {\em consistent energy-based QM/MM models} of~\cite{chen15b}. However, our a posteriori error estimates and adaptive algorithms are largely agnostic about the underlying approximation scheme and we expect that most of our analysis and algorithms apply directly or can be generalized to other QM/MM methods (including force-mixing methods~\cite{bernstein09,chen15b}) and entirely different classes of coarse-graining or multi-scale methods.

\begin{figure}
        \centering
        \includegraphics[scale=0.8]{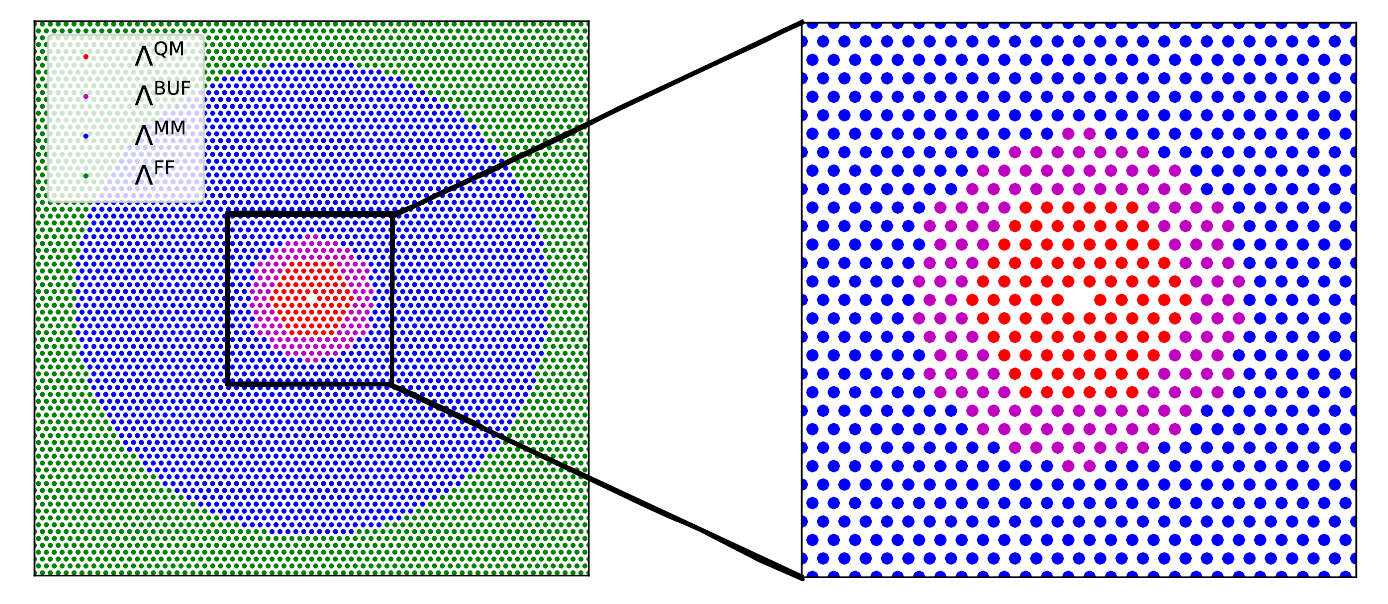}
        \caption{Domain decomposition in the QM/MM coupling scheme.}
        \label{fig:decomposition}
\end{figure}

\paragraph{Domain decomposition:} Closely following~\cite{chen15b} (where further details can be found), the first step in constructing a QM/MM approximation is to decompose the reference configuration $\L$ into three disjoint sets, $\Lambda = \LQM \cup \LMM\cup \LFF$, where $\LQM$ denotes the QM region containing the defect core, $\LMM$ denotes the MM region and $\LFF$ denotes the far-field region where atom positions will be clamped to the far-field predictor $y_0$. This yields the approximate admissible set
\begin{eqnarray}\label{e-mix-space}
\Adm_{0}^{\rm H} := \Adm_{0} \cap \Usx
\qquad \text{with} \qquad
\Usx := \left\{ u \in \UsH  ~\lvert~ u=0~{\rm in}~\Lambda^{\rm FF} \right\} .
\end{eqnarray}
In addition we specify a buffer region $\Lbuf\subset\LMM$ surrounding $\LQM$ such that all atoms in $\Lbuf\cup\LQM$ are involved in the evaluation of the site energies in $\LQM$ using the tight binding model. This decomposition is illustrated in Figure~\ref{fig:decomposition}.

\paragraph{Specification of the MM model:}
In the MM region, we approximate the tight binding site energy $E_{\ell}$ by an MM site energy $E^{\rm MM}_{\ell}$, which will be constructed such that it is cheap to evaluate, but provides an accurate representation of interatomic interaction at some distance from the defect core. These mild requirements motivate the use of a Taylor expansion~\cite[eq. (36)]{chen15b}: First, we fix some MM cutoff radius $\rcut$, to obtain a truncated QM site energy $E_\ell^{B_{\rcut}(\ell)}$. Next, we prescribe an accuracy parameter $k \geq 2$, to obtain the MM site energy as the $k$-order Taylor expansion,  
\begin{eqnarray}\label{taylor}
E^{\rm MM}_{\ell}(y_0 + u)
:= \sum_{j=0}^k \frac{1}{j!} \delta^j E_{\ell}^{B_{\rcut}(\ell)}(y_0)[
	\underset{\text{$j$ times}}{\underbrace{u, \dots, u}} ]
\end{eqnarray}
where $\delta^j E_{\ell}^{\Pi}(y_0)\left[u, \dots, u\right]$ denotes the $j$-th order variations of $E_\ell^{\Pi}$.
%
This construction is used throughout the numerical experiments in \S \ref{sec:numer}. Finally, we remark that for $|\ell| > \Rcore + \rcut$ the lattice becomes homogeneous in the ball $B_{\rcut}(\ell)$ and hence the Taylor-potential site-energies have the same coefficients, i.e., they are homogeneous as well.

\paragraph{The QM/MM hybrid model: }
The QM/MM hybrid energy functional approximating the QM energy difference functional $\E$ is given by 
\begin{multline}\label{eq:hybrid_energy}
\quad \E^{\rm H}(u) = \sum_{\ell\in \LQM}  \Big( E_{\ell}^{\rm BUF}(y_0+u) - E_{\ell}^{\rm BUF}(y_0) \Big) + \sum_{\ell\in \LMM\cup\LFF}  \Big( E^{\rm MM}_{\ell}(y_0+u) - E^{\rm MM}_{\ell}(y_0)\Big),
\qquad
\end{multline}
where the buffered QM site energy is given by $E_\ell^{\rm BUF} := E_\ell^{\Lbuf\cup\LQM}$. The fully discrete (computable) energy-based QM/MM scheme, as an approximation to \eqref{eq:variational-problem}, is now given by the finite dimensional minimization problem
\begin{eqnarray}\label{problem-e-mix}
    \bar{u}^{\rm H} \in \arg\min\big\{ \mathcal{E}^{\rm H}(u) ~\lvert~ u\in \AH \big\}.
\end{eqnarray}

\begin{remark}
    We have chosen a QM/MM model that is {\em consistent} with the reference QM model in the following sense: If $\bar{u}$ is a strongly stable solution of \eqref{eq:variational-problem}, i.e., $\delta^2 \mathcal{E}(\bar{u})$ is positive in $\UsH$, then for sufficiently large QM and buffer regions there exist equilibria $\uH$ solving \eqref{problem-e-mix}, such that 
\begin{eqnarray}\label{ass:convergence_QMMM}
    \lim_{\RQM, \Rbuf \rightarrow \infty} \|\uH-\bar{u}\|_{\UsH} = 0.
\end{eqnarray}
    We refer to \cite{chen15b} for a precise statement and sharp convergence rates, but emphasize that the Taylor expansion construction of the MM site potential about the far-field lattice state is a key ingredient. Such a result not only gives confidence in our scheme, but for the purpose of the present paper it also  allows us to relate {\it a posteriori} residual estimates to error estimates; cf. Proposition~\ref{lemma:res}.
\end{remark}



\def\Tr{\rm Tr}
\def\TH{T_{\rm H}}
\def\eo{\epsilon^{\Omega}}
\def\et{\epsilon^{\T}}
\def\diam{\mathrm{diam}}
\def\SO{\sigma^{\Omega}}
\def\uo{u^{\Omega}}
\def\phio{\phi^\Omega}
\def\Uec{\mathscr{U}_{0}}
\def\etao{\eta^{\Omega}}
\def\epso{\epsilon^\Omega(\uH)}
\def\etaoh{\eta^{\Omega}_{\rm h}}
\def\Uh{\mathscr{U}_{ \rm h}}

\def\uot{u^{\Omega}_{\T}}
\def\phioh{\phi^{\Omega}_{\rm h}}
\def\SOT{\sigma^{\Omega}_{\rm h}}
\def\Th{\mathcal{T}_h}
\def\Nh{\mathcal{N}_h}
\def\Ush{\Us_h}
\def\Ra{R^\a}
\def\Rb{R^{\rm b}}
\def\Eb{\E^{\rm b}}
\def\dof{{\rm DOF}}
\def\Omh{\Omega_h}
\def\Thr{{\T_{h,R}}}
\def\vor{\rm vor}
\def\Uhr{\Us_{h,R}}
\def\N{\mathcal{N}}
\def\eOh{\etaoh(\uH)}

\def\Th{\T^{\rm h}}
\def\epsh{\epsilon^{\rm h}(\uH)}
\def\epsi{\epsilon^{\rm i}(\uH)}
\def\phiof{\phi^{\Omega}_{\rm f}}
\def\Ih{\mathcal{I}}
\def\NMM{N_{\rm MM}}
\def\NQM{N_{\rm QM}}
\def\NBRc{N_{B_{R_c}}}
\def\NT{N_0}
\def\~{\tilde}
\def\-{\^}
\def\^{\hat}
\def\U{\mathscr{U}}
\def\d{{\rm d}}

\section{A Posteriori Error Estimates for QM/MM Coupling}
\label{sec:posteriori}
\setcounter{equation}{0}

In this section, we construct a negative-norm {\it a posteriori} error estimator for the QM/MM approximation $\uH$, and show that the estimator provides both lower and upper bounds of the approximation error.

\subsection{Lattice interpolants}
For technical purposes, it will be convenient to interpret the lattice $\Lambda$ as the vertex set of a simplicial grid $\T$, the {\em canonical partition}, as follows: first, we construct a regular (i.e., periodic) subdivision $\T^{\textrm{hom}}$ with nodes $\Lhom$ (the homogenous lattice); see e.g. \cite[Fig. 1]{2014-bqce} for concrete constructions. 
We then assume that the {\em canonical partition}, $\T$, coincides with $\T^{\textrm{hom}}$ outside the defect core region; that is, we assume that $\T \cap \T^{\textrm{hom}} \subset \{ T \in \T^{\textrm{hom}} : T \cap B_{\Rcore} = \emptyset\}$.


%
Let $\zeta_{\ell}(x)\in W^{1,\infty}(\R^d;\R)$ be the $\mathcal{P}_1$ nodal basis function associated with $\T$, then we extend all lattice displacements $u : \L \to \R^d$ to $\R^d$, via their nodal interpolants, 
\begin{equation}\label{eq:interp}
 	u(x) := \sum_{\ell\in\L} u(\ell) \zeta_{\ell}(x).
\end{equation}
%

We then have the following norm-equivalence, for constants $c$ and $C$ depending on $\gamma$, \cite{chen15b}
\begin{equation}\label{eq:norm-eq}
  c\|\nabla u\|_{L^2} \leq \| Du \|_{\ell^2_\gamma} \leq C \|\nabla u\|_{L^2}\qquad \forall \gamma > 0.
\end{equation}
We prefer to use $\|\nabla u\|_{L^2}$ as a semi-norm for $\UsH$, but employ $\|Du\|_{\ell^2_\gamma}$ primarily when estimating interactions, where the parameter $\gamma$ then becomes a measure of the interaction decay. In the same spirit, we now define the corresponding dual norm to be
\begin{equation}\label{eq:res-norm}
    \|\delta \E(u)\|_{(\UsH)^*} := \sup_{v \in \UsH \setminus \{ \text{constants} \}} \frac{\<\delta \E(u), v\>}{\|\nabla v\|_{L^2}}.
\end{equation}

\subsection{An abstract estimator}
\def\fR{\mathfrak{R}}
\def\etaR{\eta^{\fR}}
\def\etaS{\eta^{\mathfrak{S}}}
Under a suitable local stability condition, the {\em residual} $\delta\E(\uH)$ of a solution $\uH\in\Usx$ of \eqref{problem-e-mix} characterises its error (see, e.g., \cite[Lemma 3.1]{CMAME}).

\begin{proposition}\label{lemma:res}
	Let $\bar{u}$ be a strongly stable solution of \eqref{eq:variational-problem}. If the QM/MM method is consistent, \eqref{ass:convergence_QMMM} then for $\RQM, \Rbuf$ sufficiently large, there exists a QM/MM solution $\uH$ to \eqref{problem-e-mix} and constants $c, C$ independent of the approximation parameters such that 
	\begin{eqnarray}\label{res-bound}
	c\|\bar{u}-\uH\|_{\UsH} \leq \| {\delta\E(\uH)} \|_{(\UsH)^*} \leq C\|\bar{u}-\uH\|_{\UsH}.
	\end{eqnarray}
\end{proposition}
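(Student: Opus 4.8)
The plan is to exploit that $\bar u$ is a critical point of the full model, so that $\langle \delta\E(\bar u), v\rangle = 0$ for all $v \in \UsH$, and to represent the residual at $\uH$ as a Hessian acting on the error $\bar u - \uH$. Since the consistency assumption \eqref{ass:convergence_QMMM} guarantees a solution $\uH$ with $\|\uH - \bar u\|_{\UsH} \to 0$ as $\RQM, \Rbuf \to \infty$, I would first fix $\RQM, \Rbuf$ large enough that the whole segment $\bar u + t(\uH - \bar u)$, $t \in [0,1]$, lies in a fixed ball $B_\rho(\bar u) \subset \Adm_0$; this is possible because $\Adm_0$ is open and $\bar u$ is an interior point. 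On this segment $\E$ is $C^3$ (since $\mathfrak n \geq 4$ yields $\E \in C^{\mathfrak n - 1}(\Adm_0)$), so the fundamental theorem of calculus gives
\begin{equation*}
\langle \delta\E(\uH), v\rangle = \langle \delta\E(\uH) - \delta\E(\bar u), v\rangle = \langle A(\uH - \bar u), v\rangle, \qquad A := \int_0^1 \delta^2\E\big(\bar u + t(\uH - \bar u)\big)\,\dd t,
\end{equation*}
for every $v \in \UsH$.

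For the upper bound, I would use that $\delta^2\E$ is bounded uniformly on $B_\rho(\bar u)$ (a consequence of the locality estimate \eqref{site-locality-tdl} together with the norm-equivalence \eqref{eq:norm-eq}), so that $|\langle A(\uH - \bar u), v\rangle| \leq C \|\nabla(\uH - \bar u)\|_{L^2}\|\nabla v\|_{L^2}$; dividing by $\|\nabla v\|_{L^2}$ and taking the supremum over $v$ gives $\|\delta\E(\uH)\|_{(\UsH)^*} \leq C\|\bar u - \uH\|_{\UsH}$.

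For the lower bound --- which I expect to be the crux --- I would transfer the strong stability of $\bar u$ to the averaged operator $A$. Strong stability provides $c_0 > 0$ with $\langle \delta^2\E(\bar u)v, v\rangle \geq c_0 \|\nabla v\|_{L^2}^2$, and local Lipschitz continuity of $\delta^2\E$ (from boundedness of $\delta^3\E$ on $B_\rho(\bar u)$) gives $\|\delta^2\E(\bar u + t(\uH-\bar u)) - \delta^2\E(\bar u)\| \leq L\|\uH - \bar u\|_{\UsH}$ uniformly in $t$. Hence $\langle Av, v\rangle \geq (c_0 - L\|\uH - \bar u\|_{\UsH})\|\nabla v\|_{L^2}^2 \geq \tfrac{c_0}{2}\|\nabla v\|_{L^2}^2$ once $\|\uH - \bar u\|_{\UsH}$ is small, i.e. for $\RQM, \Rbuf$ large. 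Testing the identity above with $v = \uH - \bar u$ then yields $\tfrac{c_0}{2}\|\nabla(\bar u - \uH)\|_{L^2}^2 \leq \langle \delta\E(\uH), \uH - \bar u\rangle \leq \|\delta\E(\uH)\|_{(\UsH)^*}\|\nabla(\bar u - \uH)\|_{L^2}$, and dividing through gives the lower bound $c\|\bar u - \uH\|_{\UsH} \leq \|\delta\E(\uH)\|_{(\UsH)^*}$.

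The main obstacles are the two ``for $\RQM, \Rbuf$ large enough'' reductions, both resting squarely on the consistency convergence \eqref{ass:convergence_QMMM}: ensuring the error segment stays inside the open admissible set $\Adm_0$ on which $\E$ is smooth, and ensuring the averaged Hessian inherits a coercivity constant bounded below independently of the approximation parameters. The latter is the key quantitative point; it works precisely because the perturbation of the Hessian along the segment is controlled by $\|\uH - \bar u\|_{\UsH}$, which is the very quantity driven to zero by consistency, so the threshold on $\RQM, \Rbuf$ and the resulting constants $c, C$ can indeed be chosen independent of the remaining approximation parameters.
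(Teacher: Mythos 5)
Your proposal is correct, and it is essentially the argument this result rests on: the paper itself gives no inline proof of Proposition~\ref{lemma:res} but delegates it to the cited reference (Lemma 3.1 of \cite{CMAME}), whose proof is exactly this combination of $\delta\E(\bar u)=0$, the integrated-Hessian identity, boundedness of $\delta^2\E$ near $\bar u$ for the upper bound, and transfer of the strong-stability constant to the averaged Hessian via Lipschitz continuity of $\delta^2\E$ (using consistency \eqref{ass:convergence_QMMM} to make $\|\uH-\bar u\|_{\UsH}$ small) for the lower bound. The only step you assert without justification --- that a $\UsH$-ball around $\bar u$ lies in $\Adm_0$ so the segment stays in the region where $\E$ is smooth --- is a standard fact in this framework (pairwise differences are controlled linearly in $|\ell-m|$ by $\|D(\cdot)\|_{\ell^2_\gamma}$ via a path argument), so this is a legitimate omission rather than a gap.
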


In light of this result we can focus entirely on the residual $\delta\E(\uH)$, which 
we express it in terms of the {\em residual forces}, implicitly defined by 
\begin{equation}
	\label{eq:forcenew}
	\< \delta\E(\uH), v \> = \sum_{\ell \in \L} f_{\ell}(\uH) \cdot v(\ell).
\end{equation}
Although the forces $f_\ell$ are not computable in practise, we will for now retain an idealised setting and assume we do have access to them.

To proceed, we define the {\em rescaled nodal interpolant}
\begin{eqnarray}\label{eq:QM_force}
	\hat{f}(\uH)(x):= \sum_{\ell\in\L} c_\ell f_{\ell}(\uH)\zeta_\ell(x),
	\qquad \text{where} \quad
	c_\ell := \frac{1}{\int_{\R^d} \zeta_\ell(x) \dx};
\end{eqnarray}
%
a continuum field $\hat{f} \in L^2(\R^d; \R^d)$ \cite{2012-lattint} representing the residual $\delta \E(\uH)$.
The rescaling through $c_\ell$ accounts for the fact that near defects the atoms are not arranged in a lattice and one needs to correct the ``volume'' assigned to them (see the proof of Lemma \ref{th:ctsphi}).
This allows us to obtain upper and lower bounds on $\| \delta\E(\uH) \|_{(\UsH)^*}$ in terms of the solution of a whole-space Poisson problem, which provides the starting point for the construction of our estimator.

\begin{lemma} \label{th:ctsphi}
	Up to a constant shift, there exists a unique $\phi \in \dot{H}^1(\R^d)$ such that
	\begin{equation} \label{eq:defnphi}
	    \int \nabla\phi \cdot \nabla v \,dx
	    = \int \hat{f} v \dx \qquad \forall v \in H^1(\R^d).
	\end{equation}
	Moreover, there exist constants $c_1, C_1$ such that
	\begin{equation}
		c_1 \| \delta \E(\uH) \|_{(\UsH)^*} \leq \| \nabla \phi \|_{L^2(\R^d)}
		\leq C_1 \| \delta \E(\uH) \|_{(\UsH)^*}.
		\label{eq:equiv}
	\end{equation}
\end{lemma}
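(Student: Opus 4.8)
The plan is to establish the lemma in two parts: first the existence and uniqueness of $\phi$, then the norm equivalence \eqref{eq:equiv}. For the existence and uniqueness, I would apply the Lax--Milgram theorem (or equivalently the Riesz representation theorem) on the homogeneous Sobolev space $\dot{H}^1(\R^d)$ modulo constants, equipped with the inner product $\int \nabla\phi\cdot\nabla v\dx$. The key point to verify is that the right-hand side $v \mapsto \int \hat{f} v \dx$ defines a bounded linear functional on $\dot{H}^1(\R^d)$. This requires a uniform bound $\big|\int \hat f v\dx\big| \lesssim \|\nabla v\|_{L^2}$, which I expect to obtain by relating the continuum pairing $\int \hat f v \dx$ back to the discrete pairing $\< \delta\E(\uH), v\>$ via the definition \eqref{eq:QM_force} of the rescaled interpolant and the volume-correction constants $c_\ell$; the factor $c_\ell = \big(\int \zeta_\ell\dx\big)^{-1}$ is designed precisely so that $\int \hat f(\uH)\,\zeta_k \dx = f_k(\uH)$, reproducing the nodal force. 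Combined with the fact \cite{2012-lattint} that $\hat f \in L^2$, this should yield boundedness, after which ellipticity of the bilinear form on $\dot H^1/\R$ is immediate.

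For the lower equivalence $c_1 \|\delta\E(\uH)\|_{(\UsH)^*} \leq \|\nabla\phi\|_{L^2}$, the strategy is: take any test displacement $u \in \UsH$, extend it to its nodal interpolant on $\R^d$ via \eqref{eq:interp}, and use \eqref{eq:defnphi} with $v$ equal to this interpolant to write
\begin{equation*}
\< \delta\E(\uH), u \> = \int \hat f\, u \dx = \int \nabla\phi\cdot\nabla u \dx \leq \|\nabla\phi\|_{L^2}\|\nabla u\|_{L^2}.
\end{equation*}
Dividing by $\|\nabla u\|_{L^2}$ and taking the supremum over $u$ gives the bound directly from the definition \eqref{eq:res-norm} of the dual norm. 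The only subtlety is matching the continuum pairing $\int \hat f\, u\dx$ with the discrete pairing $\sum_\ell f_\ell(\uH)\cdot u(\ell)$ from \eqref{eq:forcenew}; again this is exactly what the rescaling $c_\ell$ guarantees, using $\int \hat f\,\zeta_k\dx = f_k(\uH)$ applied to $u = \sum_k u(k)\zeta_k$.

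For the upper equivalence $\|\nabla\phi\|_{L^2} \leq C_1\|\delta\E(\uH)\|_{(\UsH)^*}$, I would test \eqref{eq:defnphi} against $v = \phi$ itself (after suitable truncation/approximation to stay within the admissible test class, since $\phi \in \dot H^1$ rather than $H^1$), giving $\|\nabla\phi\|_{L^2}^2 = \int \hat f\, \phi\dx = \< \delta\E(\uH), \phi_{\rm disc}\>$, where $\phi_{\rm disc}$ denotes the restriction of $\phi$ to the lattice. Bounding the right-hand side by $\|\delta\E(\uH)\|_{(\UsH)^*}\|\nabla\phi_{\rm disc}\|_{L^2}$ and then controlling $\|\nabla\phi_{\rm disc}\|_{L^2} \lesssim \|\nabla\phi\|_{L^2}$ via the norm-equivalence \eqref{eq:norm-eq} between the nodal interpolant and the original function completes the estimate after dividing through by $\|\nabla\phi\|_{L^2}$.

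The main obstacle I anticipate is the careful reconciliation between the continuum integral pairing $\int\hat f\,v\dx$ and the discrete force pairing $\sum_\ell f_\ell\cdot v(\ell)$ near the defect core, where the lattice is irregular and the partition $\T$ deviates from $\T^{\textrm{hom}}$. Away from the core the volumes $\int\zeta_\ell\dx$ are all equal, so the rescaling is trivial, but near the defect the volume correction $c_\ell$ must be handled exactly to ensure $\int\hat f\,\zeta_k\dx = f_k$ holds \emph{for all} $k$, including core atoms; verifying this identity rigorously, together with confirming $\hat f\in L^2$ uniformly (invoking \cite{2012-lattint}), is where the technical effort concentrates.
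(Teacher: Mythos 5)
Your overall duality strategy (lower bound by testing \eqref{eq:defnphi} with interpolants of lattice functions, upper bound by testing with $\phi$ itself) is reasonable, but it rests on an identity that is false, and the machinery needed to repair it is precisely the technical core of the paper's proof. You claim the rescaling $c_\ell = \big(\int \zeta_\ell \dx\big)^{-1}$ guarantees $\int \hat{f}\,\zeta_k \dx = f_k$, hence $\int \hat{f}\, v \dx = \sum_{\ell} f_\ell \cdot v(\ell) = \langle \delta\E(\uH), v\rangle$ exactly. This is not true: since the $\mathcal{P}_1$ mass matrix is not diagonal,
\begin{equation*}
\int \hat{f}\,\zeta_k \dx \;=\; \sum_{\ell} c_\ell\, f_\ell \int \zeta_\ell \zeta_k \dx \;\neq\; f_k
\end{equation*}
in general. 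What the rescaling actually guarantees is the \emph{quadrature} identity $\int \mathcal{I}\big[\hat{f}\, v\big]\dx = \sum_\ell f_\ell \cdot v(\ell)$, where $\mathcal{I}$ is nodal interpolation, because $\mathcal{I}[\hat f\, v] = \sum_\ell \hat f(\ell)\, v(\ell)\,\zeta_\ell$ and $\hat f(\ell)\int\zeta_\ell\dx = f_\ell$. The discrete pairing is therefore only a quadrature \emph{approximation} of the continuum pairing, and both your lower and upper bounds leave the quadrature error $\big|\int \hat f\, v \dx - \sum_\ell f_\ell\cdot v(\ell)\big|$ unaccounted for. Closing this gap is exactly what the paper does: it bounds the error by $\|\nabla \hat f\|_{L^2}\|\nabla v\|_{L^2}$, uses inverse estimates for the piecewise-affine $\hat f$ to get $\|\nabla\hat f\|_{L^2} \lesssim \|\hat f\|_{L^2} \lesssim \|f\|_{\ell^2}$, and then uses the duality bounds $\|f\|_{\ell^2} \lesssim \|f\|_{(\UsH)^*}$ and $\|\hat f\|_{L^2} \lesssim \|\hat f\|_{(\UsH)^*}$ (which follow from $\|\nabla v\|_{L^2} \lesssim \|v\|_{L^2}$ for lattice functions) to absorb these terms into the dual norms. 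None of this appears in your proposal, and without it neither bound in \eqref{eq:equiv} can be concluded.

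A second, smaller gap: in your upper bound you invoke the norm equivalence \eqref{eq:norm-eq} to claim $\|\nabla (\mathcal{I}\phi)\|_{L^2} \lesssim \|\nabla\phi\|_{L^2}$ for the lattice restriction of $\phi$. But \eqref{eq:norm-eq} compares a lattice function with its \emph{own} interpolant; it says nothing about comparing $\mathcal{I}\phi$ with the original continuum function $\phi$. Stability of nodal interpolation on a unit-scale mesh requires control of second derivatives, i.e.\ $\phi \in H^2_{\rm loc}$ with $\|\nabla^2\phi\|_{L^2} \lesssim \|\hat f\|_{L^2}$ (elliptic regularity plus the Miranda--Talenti estimate), which is again a step the paper carries out explicitly (via its auxiliary discrete solution $\phi_{\rm a}$ and Galerkin orthogonality) and your argument silently assumes.
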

\begin{proof}[Sketch of the proof]
    Let $\phi \in \dot{H}^1(\R^d)$ and $\phi_\a \in \UsH$ be the solutions of \eqref{eq:defnphi}. Testing, respectively, with $v\in \dot{H}^1(\R^d)$ and $v\in \UsH$, and employing Galerkin orthogonality it is straightforward to show that 
\begin{align*}
	\| \nabla \phi_\a \|_{L^2}
	\leq 
	\| \nabla \phi \|_{L^2}
	\leq \| \nabla \phi_\a \|_{L^2} 
	   +  \| \nabla \phi - \nabla \phi_\a \|_{L^2}
	\leq  \| \nabla \phi_\a \|_{L^2} + \| \hat{f} \|_{L^2},
\end{align*}
where the last inequality follows from standard elliptic regularity and finite element error estimates. Due to the discreteness of $\phi_\a$ and $\hat{f}$ we can then use suitable inverse estimates to establish bounds between $\| \hat{f} \|_{L^2}$, $\phi_\a$  and $\| f \|_{(\UsH)^*} = \|\delta\E(\uH)\|_{(\UsH)^*}$, which complete the proof. The details are given in Appendix~\ref{sec:apd_proof}.
\end{proof}
%


The potential $\phi$ is a Riesz representation of the residual $\delta \E(\uH)$, which is related to the stress error employed in \cite{2012-ARMA-cb, Wang:2017}, e.g., for constructing similar a posteriori estimators for A/C coupling schemes. We discuss this connection in more detail in \S~\ref{sec:stress}.

\subsection{Finite element Poisson solver}
\label{sec:discrete}
\def\RO{R_{\Omega}}
The idealised estimator $\|\nabla\phi\|_{L^2}$, derived in Lemma~\ref{th:ctsphi}, provides both upper and lower bounds for the residual, however, it cannot be computed because (i) the equation \eqref{eq:defnphi} cannot be solved explicitly; and (ii) the source term $\hat{f}(\uH)$ cannot be evaluated. To overcome this, we now discretise the Poisson problem~\eqref{eq:defnphi} and its source term: we truncate the infinite computational domain for $\phi$ to a finite domain $\Omega$; we approximate  $\hat{f}(\uH)$ with a coarse finite element interpolant which will require evaluating $f_\ell$ at few sites; and we discretise the Poisson problem with a finite element method.

To make this concrete, let $\Omega$ be a convex polygon or polyhedron in $\R^d$ with boundary $\Gamma = \partial\Omega$, chosen such that $\L^{\rm QM}, \L^{\rm MM} \subset \Omega$. Next, let  $\T^{\rm a}$ be the restriction of $\T$ to the QM and buffer region, i.e. the collection of triangles whose sites belong to $\L^{\rm QM} \L^{\rm BUF}$. We then extend $\T^{\rm a}$ with a {\em coarse partition} $\T^{\rm c}$, which we require to be a shape-regular simplicial partition of the region $\Omega\setminus \cup \T^{\rm a}$ whose nodes belong to $\L$. We denote the combined triangulation by $\T^0 := \T^{\rm a} \bigcup \T^{\rm c}$ and its set of nodes by $\N^0$. For efficiency we aim to have $\#\N^0 \ll\#(\L \cap \Omega)$. Figure~\ref{figs:plotMesh} provides an illustration of the triangulation $\T^0$. 

\begin{figure}
\begin{center}
	\includegraphics[scale=0.68]{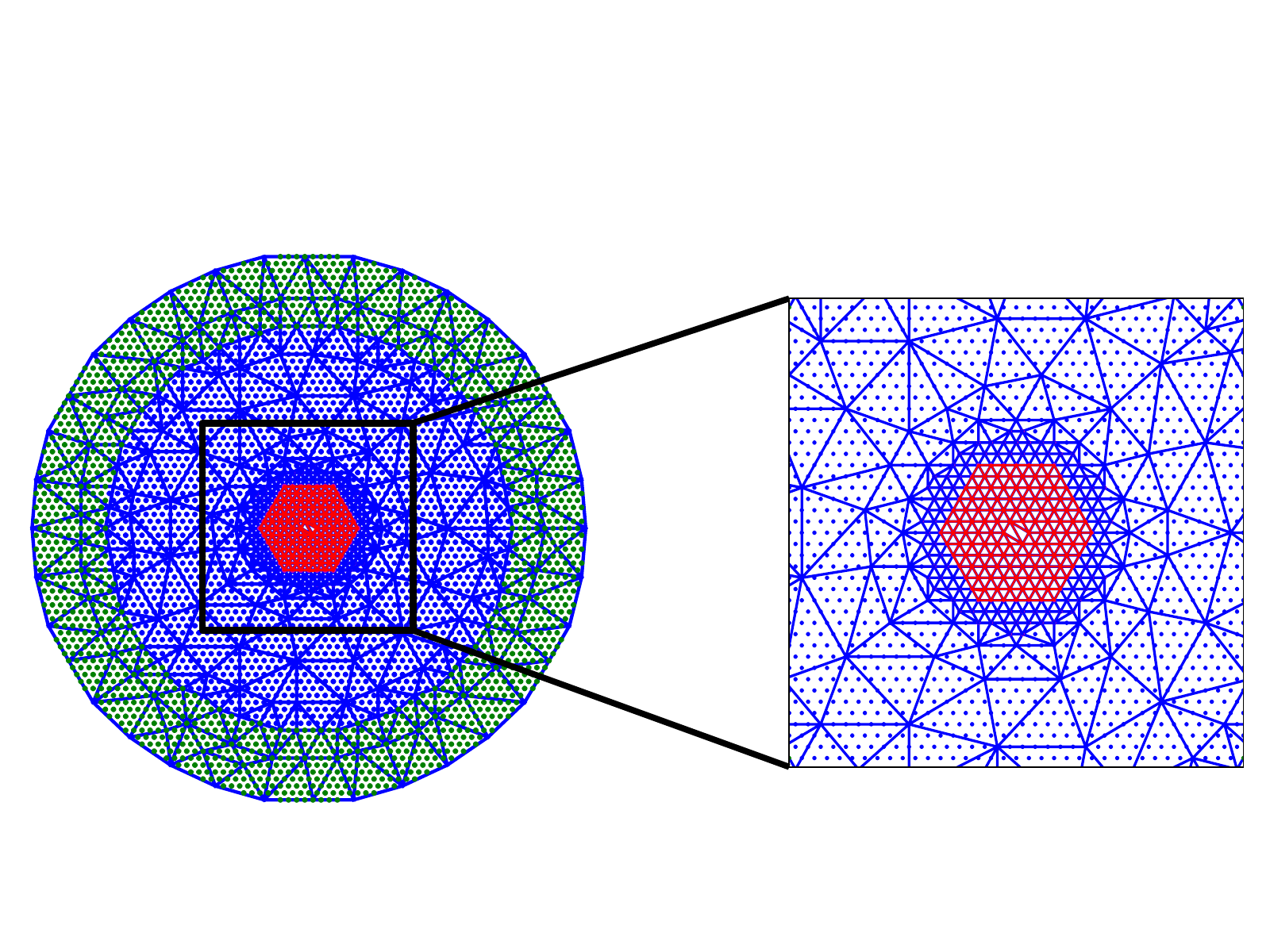}
	\caption{Illustration of the combined triangulation $\T^0$. Red and blue triangulations are $\T^\a$ and $\T^\c$ respectively. }
	\label{figs:plotMesh}
\end{center}
\end{figure}

Our next step is to replace the lattice interpolant $\hat{f}$ with an interpolant on the mesh $\T^0$. Interpolating $\hat{f}$ directly is not feasibly since $f_\ell$ is not computable. To overcome this we use the fact that the dependence of the force $f_{\ell}(\uH)$ on the environment decays exponentially fast; cf.~Lemma \ref{lemma-thermodynamic-limit}. Hence, we can approximate each residual force $f_{\ell}(\uH)$ in a finite size patch $B_{\rcut}(\ell)$, i.e., 
\begin{eqnarray}\label{eq:ftilde}
    \tilde{f}_\ell(\uH) := \frac{ \partial E^{B_{\rcut}(\ell)}(u)}{\partial u(\ell)}\Big|_{u = \uH},
\end{eqnarray}
committing an error that is exponentially small in $\rcut$. In practice, we choose $\rcut = 8 r_0$, where $r_0$ is interatomic spacing, a reliable value that was numerically found in \cite{CMAME}. 

Thus, we can now define the approximate residual 
\begin{equation}\label{eq:fhat_T0}
    \hat{f}_{\T^0}(\uH)(x):=\sum_{\ell \in \N^0} \tilde{f}_{\ell}(\uH)(\ell) \zeta^{\rm c}_{\ell}(x) + C_{\hat{f}_{\T^0}},
\end{equation}
where $C_{\hat{f}_{\T^0}}$ is chosen such that $\int_\Omega \hat{f}_{\T^0}(\uH)(x) \dx = 0$ and $\zeta^{\rm c}_{\ell} \in W^{1, \infty}(\R^d;\R)$ is a nodal basis function satisfying $\zeta^{\rm c}_{\ell}(\ell) = 1$ and $\zeta^{\rm c}_{\ell}(\ell')=0$ for all $\ell' \in \N^0 \setminus \{\ell\}$. We note that $\zeta^{\rm c}_{\ell}$ is chosen as the standard piecewise affine basis functions over $\T^0$, $\hat{f}_{\T^0}(\uH)(\ell) = \tilde{f}_\ell(\uH)(\ell) + C_{\hat{f}_{\T^0}}$ for $\ell\in\N^0$ and $\hat{f}_{\T^0}(\uH)(x) = 0$ for $x\in\R^d\setminus\Omega$. 

Given $\T^0$ and $\hat{f}_{\T^0}$, we then generate a refined triangulation $\Th$ of $\Omega$ by some adaptive refinement algorithm, which will be described in Section~\ref{sec:algo}. We denote the nodes of $\Th$ by $\N^{\rm h}$, and define the refined finite element space
\[
	\Uh:=\{ u_{\rm h} \in \mathcal{P}_1(\Th): u_\h = 0\text{ on }\Gamma \}.
\]
We can now obtain an approximation to the idealised estimator $\phi$ by solving for $\phi_{\rm h} \in \mathscr{U}_{\rm h}$ such that 
\begin{eqnarray}
\label{eq:stress_force_trun_T}
 \int_{\Omega}\nabla \phi_{\rm h}(\uH) \cdot \nabla v_{\rm h} \dx = \int_{\Omega} \hat{f}_{\T^0}(\uH) \cdot v_{\rm h} \dx \qquad \forall v_{\rm h} \in \mathscr{U}_{\rm h}.
\end{eqnarray}

\subsection{A posteriori error estimate for the Poisson problem.}
\label{sec:apost_phi}
To estimate the approximation error introduced by this discretization we particularly need to account for the truncation of the domain. We assume for the sake of technical convenience, that there exists a radius $\RO$ such that $B_{\RO} \subset\Omega\subset B_{2\RO}$; that is, $\Omega$ is approximately  a ball. This allows us to define a simple truncation operator, following \cite{2013-defects}, $T_{\RO}: \dot{H}^1(\R^d)\rightarrow H^1_0(\Omega)$, 
\begin{align}\label{eq:trmod}
T_{\RO}v(x) := \eta(x)\big(v(x)-a_{\RO}\big),
\qquad a_{\RO} = \mint_{B_{\RO}\setminus B_{\RO/2}}v(x) \dx
\end{align}
where $\eta$ is a $C^1$ cut-off function; $\eta = 1$ in $B_{\RO/2}$, $\eta = 0$ in $B_{\RO}^{\rm c}$ and $|\nabla \eta| \lesssim \RO^{-1}$. Following \cite{2013-defects}, for $v_{\RO} = T_{\RO} v$ and $a_{\RO}$ defined by \eqref{eq:trmod} we readily obtain the estimates
    \begin{align}\label{eq:trwgw}
	\| v - v_{\RO} - a_{\RO} \|_{L^2(\Omega)} &\lesssim \RO \| \nabla v - \nabla v_{\RO} \|_{L^2(\Omega \setminus B_{\RO})}, \qquad \text{and} \\
\label{eq:trwgw2}
	\| \nabla v - \nabla v_{\RO} \|_{L^2(\Omega)} &\lesssim  \| \nabla v \|_{L^2(\Omega \setminus B_{\RO/2})}.
   \end{align}

%

Applying this truncation operator in a suitable way and combining it with classical residual-based a posteriori error estimates~\cite{braess2007finite} we obtain the following {\it a posteriori} error estimate for $\phi_h$.

\begin{lemma}
\label{lem:approxEst_alt}
	Let $\phi$, $\phi_\h$ be, respectively, given by \eqref{eq:defnphi} and \eqref{eq:stress_force_trun_T},  then
\begin{align} \label{eq:combined_phi_phih_estimator_alt}
		\| \nabla \phi - \nabla \phi_{\rm h} \|_{L^2}
		 \leq &  \|\hat{f}-\hat{f}_{\T^0}\|_{(\dot{H}^1)^*} +  CR_\Omega \|\hat{f}_{\mathcal{T}^0}\|_{L^2(\Omega\setminus B_{R_\Omega/2})}
        +  C\|\nabla \phi_\h\|_{L^2(\Omega\setminus B_{R_\Omega/2})}\nonumber \\
		 &
		+ C \bigg( \sum_{T\in\T^\h} h^2_T \|\Delta \phi_\h+\hat{f}_{\T^0}\|^2_{L^2(T)} + \sum_{e\in \Gamma^\h}h_e\left\|\left[\frac{\partial \phi_\h}{\partial n}\right]\right\|^2_{L^2(e)}\bigg)^{1/2},
\end{align}
where $\Gamma^\h$ contains the edges of the elements $T\in\T^\h$ which lie in the interior of $\Omega$, $h_T$ denotes the diameter of $T$ and $h_e$ is the length of the edge $e$.
\end{lemma}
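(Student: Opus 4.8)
The plan is to characterise the error through the residual of $\phi_\h$ and to decompose that residual into a data-approximation term, a domain-truncation contribution, and a standard finite-element part. Since $\phi-\phi_\h\in\dot{H}^1(\R^d)$ (with $\phi_\h$ extended by zero outside $\Omega$), I would start from the identity $\|\nabla\phi-\nabla\phi_\h\|_{L^2}=\sup_{w}\mathcal{R}(w)/\|\nabla w\|_{L^2}$, the supremum running over $w\in\dot{H}^1(\R^d)$, where $\mathcal{R}(w):=\langle\hat{f},w\rangle-\int_\Omega\nabla\phi_\h\cdot\nabla w\,\dx$ is the residual of $\phi_\h$; using the weak form \eqref{eq:defnphi} this coincides with $\int_{\R^d}\nabla(\phi-\phi_\h)\cdot\nabla w\,\dx$. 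The first step is to split the source as $\langle\hat{f},w\rangle=\langle\hat{f}-\hat{f}_{\T^0},w\rangle+\langle\hat{f}_{\T^0},w\rangle$; the first summand is bounded by $\|\hat{f}-\hat{f}_{\T^0}\|_{(\dot{H}^1)^*}\|\nabla w\|_{L^2}$ directly from the definition of the dual norm, which produces the data-oscillation term.

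The remaining quantity $\langle\hat{f}_{\T^0},w\rangle-\int_\Omega\nabla\phi_\h\cdot\nabla w\,\dx$ involves only objects supported on $\Omega$, so I would introduce the truncation $w_{\RO}:=T_{\RO}w\in H^1_0(\Omega)$ from \eqref{eq:trmod} and split $w=(w-w_{\RO})+w_{\RO}$. For the truncation part I use that $w-w_{\RO}-a_{\RO}$ and $\nabla(w-w_{\RO})$ both vanish on $B_{\RO/2}$ (where $\eta\equiv1$), together with the zero-mean normalisation $\int_\Omega\hat{f}_{\T^0}\,\dx=0$. This localises the source contribution to $\int_{\Omega\setminus B_{\RO/2}}\hat{f}_{\T^0}\,(w-w_{\RO}-a_{\RO})\,\dx$, which by Cauchy--Schwarz and the truncation estimate \eqref{eq:trwgw} (giving $\|w-w_{\RO}-a_{\RO}\|_{L^2(\Omega)}\lesssim\RO\|\nabla w\|_{L^2}$ after invoking \eqref{eq:trwgw2}) is bounded by $C\RO\|\hat{f}_{\T^0}\|_{L^2(\Omega\setminus B_{\RO/2})}\|\nabla w\|_{L^2}$; the gradient contribution $-\int_{\Omega\setminus B_{\RO/2}}\nabla\phi_\h\cdot\nabla(w-w_{\RO})\,\dx$ is controlled, again by Cauchy--Schwarz and \eqref{eq:trwgw2}, by $C\|\nabla\phi_\h\|_{L^2(\Omega\setminus B_{\RO/2})}\|\nabla w\|_{L^2}$. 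These give the two truncation terms.

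For the interior part $\langle\hat{f}_{\T^0},w_{\RO}\rangle-\int_\Omega\nabla\phi_\h\cdot\nabla w_{\RO}\,\dx$ I would invoke Galerkin orthogonality: since $w_{\RO}\in H^1_0(\Omega)$, its quasi-interpolant $I_\h w_{\RO}\in\Uh$ is admissible in \eqref{eq:stress_force_trun_T}, so subtracting it leaves $\int_\Omega\hat{f}_{\T^0}(w_{\RO}-I_\h w_{\RO})\,\dx-\int_\Omega\nabla\phi_\h\cdot\nabla(w_{\RO}-I_\h w_{\RO})\,\dx$. Element-wise integration by parts over $\T^\h$ produces the interior element residuals $\hat{f}_{\T^0}+\Delta\phi_\h$ and the normal-derivative jumps $[\partial\phi_\h/\partial n]$ across the interior edges $\Gamma^\h$ (the boundary edges drop out because both $w_{\RO}$ and $I_\h w_{\RO}$ vanish on $\Gamma$), and the standard Cl\'ement/Scott--Zhang estimates $\|w_{\RO}-I_\h w_{\RO}\|_{L^2(T)}\lesssim h_T\|\nabla w_{\RO}\|_{L^2(\omega_T)}$ plus the matching edge bound, combined with a discrete Cauchy--Schwarz inequality and finite overlap of patches, yield the classical residual estimator times $\|\nabla w_{\RO}\|_{L^2(\Omega)}\lesssim\|\nabla w\|_{L^2}$. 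Collecting the four contributions, each carrying the factor $\|\nabla w\|_{L^2}$, and taking the supremum gives \eqref{eq:combined_phi_phih_estimator_alt}. The main obstacle is the truncation bookkeeping of the two middle terms: one must simultaneously exploit the vanishing of $w-w_{\RO}-a_{\RO}$ on $B_{\RO/2}$ and the zero-mean normalisation of $\hat{f}_{\T^0}$ to confine both the source and the gradient error to the annulus $\Omega\setminus B_{\RO/2}$ with only a linear-in-$\RO$ loss; by comparison the finite-element residual step is routine.
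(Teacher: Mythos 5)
Your proposal is correct and follows essentially the same route as the paper's proof: the same four-way splitting of the residual into a data-oscillation term $\langle\hat{f}-\hat{f}_{\T^0},w\rangle$, a source-truncation term handled via the zero mean of $\hat{f}_{\T^0}$ and the estimates \eqref{eq:trwgw}--\eqref{eq:trwgw2}, a gradient-truncation term, and an interior Galerkin residual. The only difference is cosmetic: you spell out the Cl\'ement-interpolation/integration-by-parts argument for the interior term, where the paper simply cites the standard residual-based a posteriori analysis of Braess.
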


The a posteriori error estimate \eqref{eq:combined_phi_phih_estimator_alt} should be grouped into three components:
\begin{enumerate}
	\item The ``data oscillation'' $\|\hat{f}-\hat{f}_{\T^0}\|_{(\dot{H}^1)^*}$ arises due to the approximate evaluation and interpolation of the residual force. We claim that this term can be neglected in practise, and will give a detailed justification for this in Section \ref{sec:dataosc}.

	\item The group  $CR_\Omega \|\hat{f}_{\mathcal{T}^0}\|_{L^2(\Omega\setminus B_{\Omega/2})}
        +  C\|\nabla \phi_\h\|_{L^2(\Omega\setminus B_{R_\Omega/2})}$ gives an estimate for the error due to truncating the computational domain. 
        The first of the two terms will be analyzed together with the data oscillation term in  \ref{sec:dataosc}. Although the second term could simply be absorbed into the QM/MM a posteriori error estimate \eqref{eq:APET}, we will still retain it to control the size of the computational domain $\Omega$ for $\phi_\h$.

	\item The remaining group is the standard residual-based a posteriori error estimator on a finite domain, measuring how accurately $\phi_\h$ solves the poisson problem. We explain in Appendix \ref{sec:stableL2} why we expect that it can also be absorbed into $\| \nabla \phi_\h \|_{L^2}$, however, we also find in numerical experiments that a well-resolved estimator $\phi_\h$ provides significantly better estimates on the QM/MM model error, hence we keep the present form in our adaptive algorithm.
\end{enumerate}

\begin{proof}[Proof of Lemma \ref{lem:approxEst_alt}]
    By \eqref{eq:defnphi}, for $v\in\dot{H}^1$, we have
	\[
		\| \nabla \phi - \nabla \phi_{\rm h} \|_{L^2}
		 = \sup_{\| \nabla v \|_{L^2} = 1}
\int_{\R^d} \Big( \nabla \phi \cdot \nabla v - \nabla \phi_{\rm h} \cdot \nabla v \Big) \dx
		= \sup_{\| \nabla v \|_{L^2} = 1}
\int_{\R^d} \Big( \hat{f} \cdot v - \nabla \phi_{\rm h} \cdot \nabla v \Big) \dx.
	\]
Let $\hat{f}_{\T^0}$ be given by \eqref{eq:fhat_T0} and $v_R := T_{R_\Omega} v$ be given by \eqref{eq:trmod}, then we split the residual into four groups, 
\begin{align}
	\int_{\R^d} \Big( \hat{f} \cdot v - \nabla \phi_{\rm h} \cdot \nabla v  \Big) \dx
		=& \int_{\R^d}  \Big(\hat{f} - \hat{f}_{\mathcal{T}^0}\Big) \cdot v \dx + \int_{\R^d} \hat{f}_{\mathcal{T}^0} \cdot (v-v_R) \dx \nonumber \\
		& + \int_{\R^d} \Big(\hat{f}_{\mathcal{T}^0} \cdot v_R - \nabla \phi_{\rm h} \cdot \nabla v_R\Big)\dx
		   - \int_{\R^d} \nabla \phi_{\rm h} \cdot (\nabla v -\nabla v_R)\dx \nonumber \\
		=&: T_1 + T_2 + T_3 + T_4. 
\end{align}

The term  $T_1$ is simply estimated by
\begin{align}\label{eq:al2}
	T_1 = \int_{\R^d} (\hat{f} - \hat{f}_{\mathcal{T}^0}) \cdot v \dx
	\leq  \|\hat{f}-\hat{f}_{\T^0}\|_{(\dot{H}^1)^*} \|\nabla v\|_{L^2}.
\end{align}

As for the term $T_2$, we have
\begin{align}\label{eq:EstT2}
T_2 =& \int_{\R^d} \hat{f}_{\mathcal{T}^0} \cdot (v-v_R) \dx  \nonumber \\
=& \int_{\R^d} \hat{f}_{\mathcal{T}^0} \cdot (v-v_R-a_R) \dx + \int_{\R^d} \hat{f}_{\mathcal{T}^0} \cdot a_R \dx \nonumber \\
\leq& CR_\Omega \|\hat{f}_{\mathcal{T}^0}\|_{L^2(\Omega\setminus B_{R_\Omega/2})}\|\nabla v\|_{L^2}.
\end{align}
where the last inequality follows from \eqref{eq:trwgw} and the fact that $\int_{\Omega}\hat{f}_{T^0}\dx=0$, $B_{R_\Omega/2}$ is a ball that containing the region where $v=v_R$. 

Next, since $v_R \in \dot{H}_0^1(\Omega)$, $\hat{f}_{\mathcal{T}^0} = \phi_\h = 0$ in $\R^d\setminus\Omega$ and  \eqref{eq:trwgw},  $\|\nabla v_R\|_{L^2(\Omega)}\leq \|\nabla v\|_{L^2(\Omega)}$, we can estimate $T_3$ by the standard arguments of residual-based a posteriori error analysis (e.g., following \cite[Theorem 8.1, \S~III.8]{braess2007finite}) to obtain 
\begin{equation}
\label{eq:EstT3}
T_3	\leq
	C\bigg\{
			\sum_{T\in\T^\h} h^2_T \|\Delta \phi_\h+\hat{f}_{\T^0}\|^2_{L^2(T)}
			+
			\sum_{e\in\Gamma^\h} h_e  \left\|\left[\frac{\partial \phi_\h}{\partial n}\right]\right\|^2_{L^2(e)} \bigg\}^{1/2}
		\|\nabla v\|_{L^2}.
\end{equation}

Finally, applying \eqref{eq:trwgw2} to $T_4$, and using the fact that $\nabla v = \nabla v_R$ in $B_{R_\Omega/2}$ we have
\begin{align}\label{eq:EstT4}
T_4 = \int_{\R^d} \nabla \phi_{\rm h} \cdot (\nabla v -\nabla v_R)\dx \leq C\|\nabla \phi_\h\|_{L^2(\Omega\setminus B_{R_\Omega/2})}\|\nabla v\|_{L^2}.
\end{align}

Combining \eqref{eq:al2}, \eqref{eq:EstT2}, \eqref{eq:EstT3} and \eqref{eq:EstT4}, we obtain the stated result.
\end{proof}

\subsection{QM/MM a posteriori error estimate}
We are now in the position to define the \emph{approximate error estimator} of a solution $\uH$ of the QM/MM scheme~\eqref{problem-e-mix}, by
\begin{equation}\label{eq:APET}
	\eta_\h(\uH) := ||\nabla \phi_{\h}(\uH)||_{L^2(\Omega)},
\end{equation}
where $\phi_\h(\uH)$ is the solution to \eqref{eq:stress_force_trun_T}. The quality of the estimator $\phi_\h$ is characterized, in Lemma~\ref{lem:approxEst_alt}, by
\begin{align}\label{eq:rhoh_alt}
\rho^2_\h(\uH) :=& \sum_{T\in\T^\h} h^2_T \|\Delta \phi_\h+\hat{f}_{\T^0}\|^2_{L^2(T)} + \sum_{e\in \Gamma^\h}h_e\left\|\left[\frac{\partial \phi_\h}{\partial n}\right]\right\|^2_{L^2(e)} \nonumber \\
		  &+ R_\Omega^2 \|\hat{f}_{\mathcal{T}^0}\|_{L^2(\Omega\setminus B_{R_\Omega/2})}^2
        +  \|\nabla \phi_\h\|_{L^2(\Omega\setminus B_{R_\Omega/2})}^2.
\end{align}
%
%

We summarize the results of the foregoing sections in the following main theorem, demonstrating the equivalence of the \emph{idealised residual estimate} $\|\delta \E(\uH)\|_{(\UsH)^*}$ and the \emph{approximate estimator} $\eta_\h(\uH)$. The equivalence constants are determined by an {\em oscillation factor},
\[
		F_{\rm osc} :=
		\frac{\| \hat{f} - \hat{f}_{\T^0} \|_{(\dot{H}^1)^*}}{ \| \hat{f} \|_{(\dot{H}^1)^*}},
\]
set to $F_{\rm osc} = 0$ if $\| \hat{f} \|_{(\dot{H}^1)^*} = 0$, which determines how well $\hat{f}_{\T^0}$ approximates the true residual.

\begin{theorem} \label{th:mainresult}
	There exists constants $c, C$ such that
	\begin{equation} \label{eq:main:first}
		c (1 + F_{\rm osc})^{-1} \eta_\h(\uH)
		\leq
		\| \delta \E(\uH) \|_{(\UsH)^*}
		\leq
		C \Big( (1 + F_{\rm osc}) \eta_\h(\uH)
					+ \rho_{\rm h}(\uH) \Big),
	\end{equation}
\end{theorem}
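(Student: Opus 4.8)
The plan is to use Lemma~\ref{th:ctsphi} to replace the non-computable dual norm $\|\delta\E(\uH)\|_{(\UsH)^*}$ by the idealised Poisson quantity $\|\nabla\phi\|_{L^2}$, which it controls from above and below with constants $c_1, C_1$, and then to compare $\|\nabla\phi\|_{L^2}$ with the computable estimator $\eta_\h(\uH) = \|\nabla\phi_\h\|_{L^2(\Omega)}$. The bridge between the two is the elementary Riesz identity: testing the weak form \eqref{eq:defnphi} with $\phi$ itself gives $\|\nabla\phi\|_{L^2} = \|\hat{f}\|_{(\dot{H}^1)^*}$, and combined with the definition of $F_{\rm osc}$ this turns the abstract oscillation ratio into the usable identity $\|\hat{f} - \hat{f}_{\T^0}\|_{(\dot{H}^1)^*} = F_{\rm osc}\,\|\nabla\phi\|_{L^2}$. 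These are what convert $F_{\rm osc}$ into concrete additive and multiplicative contributions.

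For the lower bound on the error (equivalently, the upper bound on $\eta_\h$) I would argue by discrete stability. Testing \eqref{eq:stress_force_trun_T} with $v_\h = \phi_\h$, extending $\phi_\h\in\Uh\subset H^1_0(\Omega)$ by zero to $\R^d$, and using that $\hat{f}_{\T^0}$ has zero mean and vanishes outside $\Omega$, Cauchy--Schwarz in the duality pairing yields $\eta_\h = \|\nabla\phi_\h\|_{L^2(\Omega)} \le \|\hat{f}_{\T^0}\|_{(\dot{H}^1)^*}$. The triangle inequality in $(\dot{H}^1)^*$ then gives $\|\hat{f}_{\T^0}\|_{(\dot{H}^1)^*} \le \|\hat{f}\|_{(\dot{H}^1)^*} + \|\hat{f} - \hat{f}_{\T^0}\|_{(\dot{H}^1)^*} = (1+F_{\rm osc})\|\nabla\phi\|_{L^2}$, and Lemma~\ref{th:ctsphi} bounds the last factor by $C_1\|\delta\E(\uH)\|_{(\UsH)^*}$. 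Rearranging produces $c(1+F_{\rm osc})^{-1}\eta_\h \le \|\delta\E(\uH)\|_{(\UsH)^*}$ with $c = C_1^{-1}$; note that this half needs no residual/truncation term $\rho_\h$, precisely because the discrete solution is energy-stable against its own data.

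For the upper bound on the error I would start from $\|\delta\E(\uH)\|_{(\UsH)^*} \le c_1^{-1}\|\nabla\phi\|_{L^2}$ and bound $\|\nabla\phi\|_{L^2} \le \eta_\h + \|\nabla\phi - \nabla\phi_\h\|_{L^2}$. Lemma~\ref{lem:approxEst_alt}, after recognising that its three non-oscillation terms are exactly the quantities collected in $\rho_\h$ via \eqref{eq:rhoh_alt}, reads $\|\nabla\phi - \nabla\phi_\h\|_{L^2} \le \|\hat{f} - \hat{f}_{\T^0}\|_{(\dot{H}^1)^*} + C\rho_\h = F_{\rm osc}\|\nabla\phi\|_{L^2} + C\rho_\h$. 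Substituting and moving the oscillation contribution to the left gives $(1-F_{\rm osc})\|\nabla\phi\|_{L^2} \le \eta_\h + C\rho_\h$, and for $F_{\rm osc}$ controlled (bounded away from $1$, which is exactly the regime in which the oscillation term is negligible, cf. Section~\ref{sec:dataosc}) this closes to $\|\nabla\phi\|_{L^2} \le C\big((1+F_{\rm osc})\eta_\h + \rho_\h\big)$, hence the claimed upper bound on $\|\delta\E(\uH)\|_{(\UsH)^*}$.

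The routine ingredients here are the triangle inequalities in $(\dot{H}^1)^*$ and the bookkeeping that identifies the three surviving terms of Lemma~\ref{lem:approxEst_alt} with $\rho_\h$. I expect the genuinely delicate point to be the absorption step $(1-F_{\rm osc})\|\nabla\phi\|_{L^2}\le\eta_\h + C\rho_\h$: because $F_{\rm osc}$ is measured against the \emph{unknown} true residual $\|\hat{f}\|_{(\dot{H}^1)^*}=\|\nabla\phi\|_{L^2}$, the oscillation contribution is self-referential and can only be moved to the left-hand side once it is known to be subcritical. This is the mechanism by which the estimate degrades as $F_{\rm osc}\to 1$, and it is the reason that reliability of $\eta_\h$ hinges on controlling the data oscillation.
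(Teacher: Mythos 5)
Your proposal is correct, and it reaches \eqref{eq:main:first} by a route that differs from the paper's in one essential place. The paper runs \emph{both} directions through the perturbed Galerkin identity \eqref{eq:phi-phih},
\[
\| \nabla \phi \|^2_{L^2} - \| \nabla \phi_{\h}\|^2_{L^2}
= \| \nabla \phi - \nabla \phi_{\h}\|^2_{L^2} + 2\<\hat{f}-\hat{f}_{\T^0}, \phi_\h\>,
\]
for the lower bound discarding the favourably signed middle term and closing with Young's inequality, and for the upper bound estimating the middle term via $F_{\rm osc}$ and the first term via Lemma \ref{lem:approxEst_alt}. You never form this identity: your lower bound comes from discrete stability (testing \eqref{eq:stress_force_trun_T} with $\phi_\h$ to get $\eta_\h(\uH)\le\|\hat{f}_{\T^0}\|_{(\dot{H}^1)^*}$) followed by the triangle inequality in the dual norm and Lemma \ref{th:ctsphi}. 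This is more elementary, it makes structurally transparent why no $\rho_\h$ can appear on that side, and it does not use any orthogonality property of $\phi_\h$, so it would survive, e.g., an inexact solve of \eqref{eq:stress_force_trun_T}. Your upper bound (additive triangle inequality, Lemma \ref{lem:approxEst_alt}, absorption) is the first-order version of the paper's squared-norm argument; in substance the two are the same, with the paper's quadratic form giving marginally cleaner constant bookkeeping.

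The point you flag as delicate is genuine, and you in fact treat it more carefully than the paper does. In passing to \eqref{eq:proofmain:upperbound}, the paper silently drops the data-oscillation contribution that Lemma \ref{lem:approxEst_alt} places inside $\|\nabla\phi-\nabla\phi_\h\|_{L^2}$; keeping it, that display becomes
\[
\big(\tfrac34 - 2F_{\rm osc}^2\big)\,\| \nabla \phi \|^2_{L^2}
\le (1+F_{\rm osc}^2)\,\eta_\h^2 + C\rho_\h^2,
\]
which, exactly like your step $(1-F_{\rm osc})\|\nabla\phi\|_{L^2}\le\eta_\h+C\rho_\h$, closes only when $F_{\rm osc}$ is subcritical. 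Nor is this an artifact of either argument: $\eta_\h$ and $\rho_\h$ depend on the data only through $\hat{f}_{\T^0}$, so if one fixes $\hat{f}_{\T^0}$ and lets $\|\hat{f}\|_{(\dot{H}^1)^*}$ grow (coarse sampling missing ever larger residual features), then $F_{\rm osc}\to 1$ while the right-hand side of the claimed upper bound stays bounded and the left-hand side diverges. Hence the upper bound in \eqref{eq:main:first} cannot hold with absolute constants; both proofs implicitly require $F_{\rm osc}$ bounded away from a critical threshold, which your write-up states explicitly and the paper's conceals.
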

\begin{proof}
From Lemma \ref{th:ctsphi}, we already know that $ \| \delta \E(\uH) \|_{(\UsH)^*} \eqsim \| \nabla \phi \|_{L^2}$. Moreover, we recall that $\| \nabla \phi \|_{L^2} =  \| \hat{f} \|_{(\dot{H}^1)^*}$. Since $\phi$ and $\phi_\h$ are the solutions of \eqref{eq:defnphi} and \eqref{eq:stress_force_trun_T}, respectively, we use Galerkin orthogonality to write
\begin{align}
    \label{eq:phi-phih}
    \| \nabla \phi \|^2_{L^2} - \| \nabla \phi_{\h}\|^2_{L^2}  =& \<\nabla \phi - \nabla \phi_{\h}, \nabla \phi - \nabla \phi_{\h}\> + 2\<\nabla \phi - \nabla \phi_{\h}, \nabla \phi_\h\> \nonumber \\
    =& \| \nabla \phi - \nabla \phi_{\h}\|^2_{L^2} + 2\<\hat{f}-\hat{f}_{\T^0}, \phi_\h\>, \qquad \text{where} \\ 
    \notag 
	\big|\<\hat{f} - \hat{f}_{\T^0}, \phi_\h\> \big| \leq& \| \hat{f} - \hat{f}_{\T^0} \|_{(\dot{H}^1)^*}  \|\nabla \phi_\h \|_{L^2}
	\leq F_{\rm osc} \| \nabla\phi \|_{L^2} \|\nabla \phi_\h \|_{L^2}.
\end{align}

To obtain an upper bound for $\| \nabla \phi \|_{L^2}$ we use Cauchy's inequality to estimate
\begin{align*}
	\| \nabla \phi \|^2_{L^2}
	&\leq
	\| \nabla \phi_{\h}\|^2_{L^2}
	+
	\| \nabla \phi - \nabla \phi_{\h}\|^2_{L^2}
	+
	2F_{\rm osc} \| \nabla \phi \|_{L^2} \|\nabla \phi_\h \|_{L^2}   \\
	&\leq
	\big(1 + F_{\rm osc}^2\big) \| \nabla \phi_{\h}\|^2_{L^2}
	+
	\| \nabla \phi - \nabla \phi_{\h}\|^2_{L^2}
	+
	{\textstyle \frac14} \|\nabla \phi \|_{L^2}^2.
\end{align*}
Rearranging and applying Lemma~\ref{lem:approxEst_alt} we deduce
\begin{equation} \label{eq:proofmain:upperbound}
	{\textstyle \frac34} \| \nabla \phi \|^2_{L^2}
	\leq
	\big(1 + F_{\rm osc}^2\big) \eta_\h^2
	+
	C \rho_\h^2.
\end{equation}
This establishes the upper bound.

To obtain an lower bound we can use an analogous argument. Starting again from \eqref{eq:phi-phih} we have
\begin{align*}
	\| \nabla \phi_{\h}\|^2_{L^2}
	& \leq
	\| \nabla \phi \|^2_{L^2}
	+
	F_{\rm osc} \| \nabla \phi \|_{L^2} \|\nabla \phi_\h \|_{L^2} \\
	&\leq
	(1 + F_{\rm osc}^2) \| \nabla \phi \|^2_{L^2}
	+
	{\textstyle \frac{1}{4}} \|\nabla \phi_\h \|_{L^2}^2,
\end{align*}
which can be rearranged to yield
\begin{equation} \label{eq:proofmain:lowerbound}
	(1 + F_{\rm osc}^2)^{-1} \eta_\h^2 \leq
	{\textstyle \frac{4}{3}} \| \nabla \phi \|^2_{L^2}.
\end{equation}
Noting that $1+F_{\rm osc}^2 \eqsim (1 + F_{\rm osc})^2$, and combining \eqref{eq:proofmain:upperbound}, \eqref{eq:proofmain:lowerbound} and Lemma \ref{th:ctsphi} we obtain the stated result.
\end{proof}

\section{Implementation and Numerical Tests}
\label{sec:numer}
\setcounter{equation}{0}
\def\Mqm{\mathcal{M}_{\rm QM}}
\def\Mmm{\mathcal{M}_{\rm MM}}
We now describe an adaptive QM/MM algorithm leveraging the model error estimator $\eta_\h(\uH)$, and
present numerical examples for point defects and an edge dislocation.



\subsection{Adaptive Algorithms}
\label{sec:algo}
\def\dist{\textrm{dist}}
\def\rhoth{\rho_{T}^{\rm h}}
\def\rhoh{\rho^{\rm h}}
\def\rhO{\rho^{\Omega}}
\def\fT{\hat{f}_{\T^0}}
We propose a two-layer adaptive strategy, consisting of an {\em outer} Algorithm \ref{alg:main} driving the QM/MM model selection and an {\em inner} Algorithm~\ref{alg:adaptMesh} to compute the estimator $\phi_\h$. Both algorithms follow the established SOLVE-ESTIMATE-MARK-REFINE loop~\cite{Dorfler1996}. To choose where to refine the model we split the estimator $\eta_\h$ into local contributions, 
\begin{equation}
    \label{eq:rho}
    \eta_{\h,T} := \frac{\|\nabla \phi_\h(\uH) \|_{L^2(T)}}{\eta_\h(\uH)}, \quad \forall T\in\Th,
\end{equation}
such that $\sum_{T\in\Th} \eta^2_{\h,T} = \eta_\h^2(\uH)$. We begin by describing the outer algorithm, following by a detailed discussion of the individual steps.

\begin{algorithm}[H]
\caption{Adaptive QM/MM algorithm}
\label{alg:main}
{\bf Prescribe} $\LQM, \LMM$, termination tolerance $\eta_{\rm tol}$, refinement tolerance $\tau_{\rm D}$.

\begin{algorithmic}[1]
\REPEAT
	\STATE{ \textit{Solve}: Solve \eqref{problem-e-mix} to obtain $\uH$. }
	\STATE{  \textit{Estimate}: Apply Algorithm \ref{alg:adaptMesh} to compute $\eta_\h(\uH)$ and $\eta_{\h,T}$ (cf. \eqref{eq:APET}, \eqref{eq:rho}). } 		
	\STATE{\textit{Mark}: Use D\"{o}rfler strategy with $\tau_{D}$ to mark elements for refinment.}
	\STATE{ \textit{Refine:} Construct new $\LQM$ and $\LMM$ regions.}
\UNTIL{$\eta_\h(\uH) < \eta_{\rm tol}$}
\end{algorithmic}
\end{algorithm}

The {\it Solve} step requires no further comments, while the {\it Estimate} step is the subject of Algorithm \ref{alg:adaptMesh} below. We therefore discuss the {\it Mark} and {\it Refine} steps first:


%
{\bf Mark.} We employ D\"{o}rfler's strategy~\cite{Dorfler1996}, which is a widely used marking strategy to ensure error reduction. Given $0 < \tau_{\rm D} < 1$, we construct the minimal set $\Mrefine \subset \L$ such that the following D\"{o}rfler properties are satisfied: $\sum_{T\subset \Mrefine}\eta_{\h,T} \geq \tau_{D} \sum_{T\in \mathcal{T}^\h} \eta_{\h,T}$ and mark all the sites in $\Mrefine$. The default parameter $\tau_{\rm D} = 0.3$ is used in all experiments reported below.

\def\dist{\mathrm{dist}}

{\bf Refine.} Once we have marked elements for refinement, we must construct a ``refined'' QM/MM partitioning. We present a simple strategy that has worked well in all our tests, but has restrictions that we discuss in detail in the Conclusion. 
We divide the marked elements into two subsets $\Mqm$ and $\Mmm$. The set $\Mqm$ contains those elements connected with the QM/MM interface by a path whose elements all belong to $\Mqm$. The remaining elements belong to the subset denoted as $\Mmm$. 
We define $\dist: \L\to\R$ to be the function mapping the atoms to their distances to the defect core $B_{\Rcore}$, and let the new QM region has a radius $\RQM = \max_{\ell\in \Mqm\cap \L} \dist(\ell)$. See Figure \ref{figs:refi} for an illustration. 
If $\Mmm\neq \emptyset$, then we analogously enlarge the MM region to absorb elements of $\Mmm$ into the MM region. 


{\bf Estimate:}  Finally we turn towards the details of the {\em Estimate} step. 
As indicated by Lemma~\ref{lem:approxEst_alt} we construct an estimator $\eta_\h$ controlling its accuracy using an adaptive finite element method; specifically we prescribe a relative tolerance $\tau_{\rm est} > 0$ and require as a termination criterion that 
\[
	\rho_\h \leq \tau_{\rm est} \| \nabla \phi_\h \|_{L^2}.	
\]
To drive the mesh refinement we define the truncation error indicator,
\begin{align}\label{eq:rho_omega}
	\rho_{\h,\Omega} := R_\Omega \|\hat{f}_{\mathcal{T}^0}\|_{L^2(\Omega\setminus B_{R_\Omega/2})} + \|\nabla \phi_\h\|_{L^2(\Omega\setminus B_{R_\Omega/2})},
\end{align}
which controls adaption of the computational domain $\Omega$, and the local residual indicator,
\begin{align}
\label{eq:residual_estimate}
\rho_{\h,T} := \left[ h^2_{T}\|\Delta \phi_\h(\uH) + \fT(\uH) \|^2_{L^2(T)} + \frac{1}{2}\sum_{e \subset \partial T} h_{e}\Big\|\left[\frac{\partial \phi_\h(\uH)}{\partial n}\right]\Big\|^2_{L^2(e)}\right]^{\frac{1}{2}},  \quad \forall T \in \T^{k},
\end{align}
where $\T^{k}$ denotes the triangulation of the $k$-th loop of the following algorithm, and the initial configuration $\T^{0}$ is introduced in Section \ref{sec:discrete}. This leads to the following adaptive finite element algorithm, which completes the specification of our adaptive QM/MM scheme.

\begin{algorithm}[H]
\caption{Adaptive algorithm for the estimator $\eta_{\rm h}(\uH)$}
\label{alg:adaptMesh}
{\bf Prescribe} $\uH$, $\Omega$, $\T^0$, $k = 0$,  $\tau_{\rm est}$, refinement parameter $\tau_{\rm D}$ $\qquad\qquad\qquad$.

\begin{algorithmic}[1]
\STATE{ Compute $\fT(\uH)$ according to \eqref{eq:fhat_T0}.} 
\REPEAT
	\STATE{ \textit{Solve}:  Solve \eqref{eq:stress_force_trun_T} on $\T^k$ for $\phi_\h(\uH)$ and compute $\eta_\h(\uH) = \|\nabla \phi_\h(\uH)\|_{L^2(\Omega)}$.} 
	\STATE{  \textit{Estimate}: Compute $\rho_{\h,\Omega}$ and $\rho_{\h,T}$ from \eqref{eq:rho_omega} and \eqref{eq:residual_estimate}.}
	\STATE{ Compute $\rho_{\h, \T} = \sum_T \rho_{\h,T}$ and $\rho_\h = \rho_{\h, \T} + \rho_{\h, \Omega}$.}
	\IF{$\rho_{\h, \T} \leq \rho_{\h, \Omega}$}
        \STATE{Increase $R_{\Omega}$ to $(1+\theta)R_\Omega$, expand $\T^0$ by constructing mesh for the incremental domain; let $\hat{f}_{\T^0}(\uH)(\ell)=0$ for $\ell$ belonging to the incremental mesh.}
	\ENDIF
	\STATE{ \textit{Mark}: Use D\"{o}rfler strategy with parameter $\tau_{\rm D}$ to mark elements for refinement.}
	\STATE{ \textit{Refine:} Bisect the selected elements to generate a new mesh $\T^{k+1}$.}
	\STATE{ Set $k \leftarrow k+1$.}
\UNTIL{$\rho_\h < \tau_{\rm est} \eta_\h$}
\end{algorithmic}
\end{algorithm}

According to \cite[Theorem 2]{2013-defects}, 
the truncation error $\rho_{\h,\Omega}(R)$ is approximately $CR^{-d/2}$. If we increase $R$ to $(1+\theta)R$, 
\begin{equation*}
	\rho_{\h,\Omega}((1+\theta)R)\simeq \frac{C\big((1+\theta)R\big)^{-d/2}}{CR^{-d/2}}CR^{-d/2}= (1+\theta)^{-d/2}\rho_{\h,\Omega}(R).
\end{equation*}
We can take $\theta=\Theta^{-d/2}-1$ in order to reduce $\rho_{\h,\Omega}(R)$ to $\Theta\rho_{\h,\Omega}(R)$ for $0<\Theta<1$.


\subsection{Test problems.}
\label{sec:numexp}
Our numerical tests will be performed with a tight-binding toy model that retains the qualitative properties of more realistic tight-binding models, but enables rapid experimenting on large computational domains with limited resources. The Hamiltonian is given by \eqref{tb-H-elements}, with  the onsite term is $h_{\rm ons} = 0$, and the hopping term given by the Morse potential \cite{morse29} (scaled to have a minima at $r = 1$),
$
	h_{\rm hop} (r) =  e^{-2\alpha(r-1)} - 2e^{-\alpha(r-1)},
$
with $\alpha=2.0$, which is the same model as was used in the numerical results in \cite{chen15b, CMAME}.

\begin{figure}
    \label{fig:meshes}
	\centering
	\subfigure[Point defect]{
		\label{fig:pdmesh}
		\includegraphics[height=3.5cm]{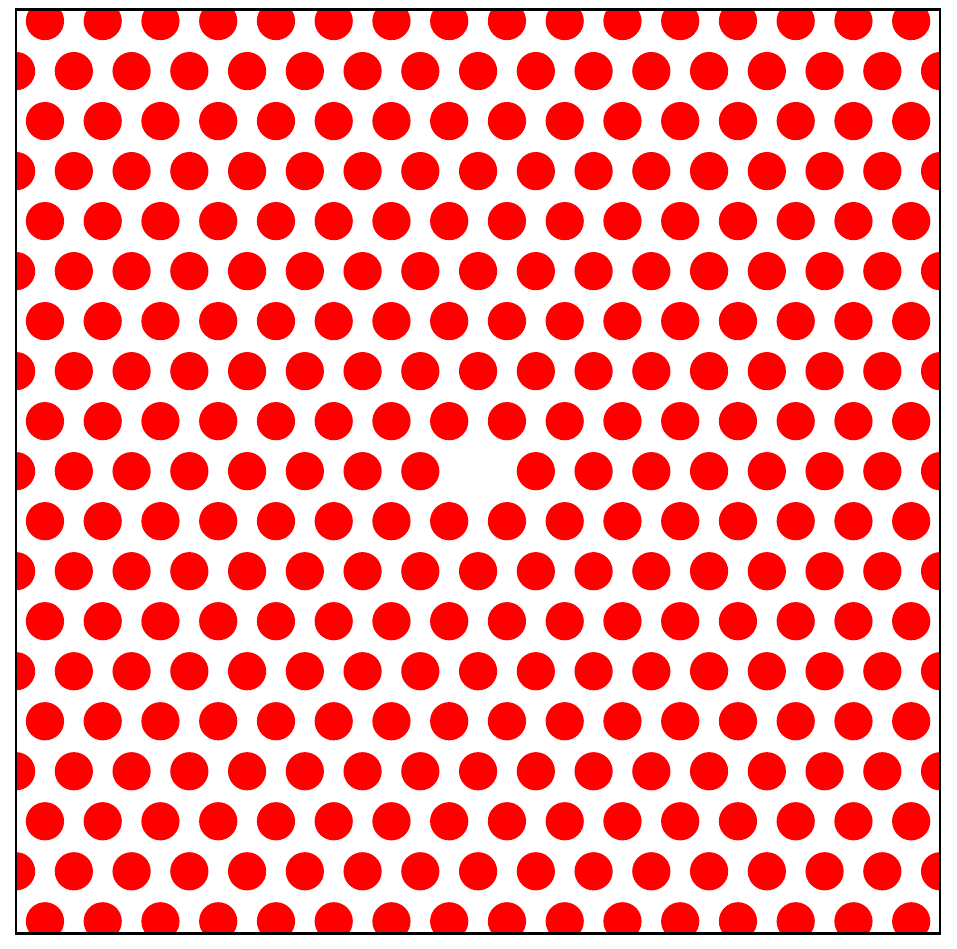}}
	\hspace{0.6cm}
	\subfigure[Micro-crack]{
		\label{fig:mcmesh}
		\includegraphics[height=3.5cm]{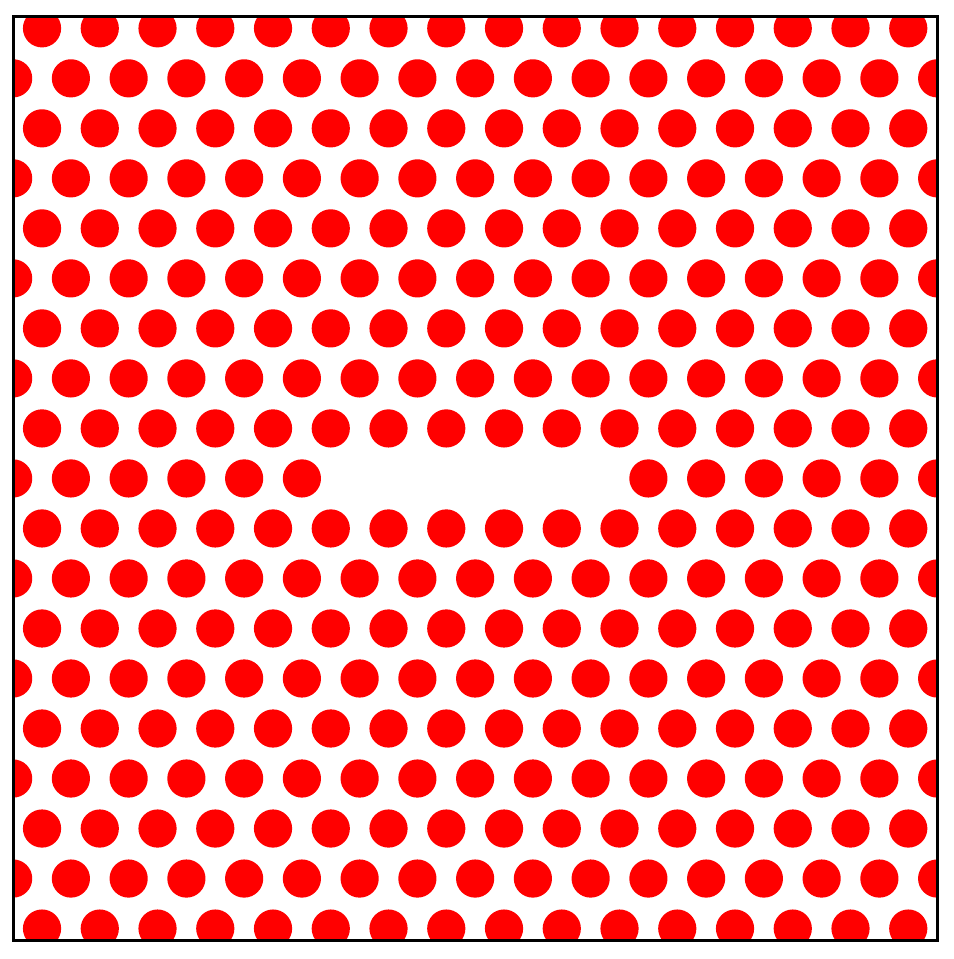}}
	\hspace{0.6cm}
	\subfigure[Edge-dislocation]{
		\label{fig:edmesh}
		\includegraphics[height=3.5cm]{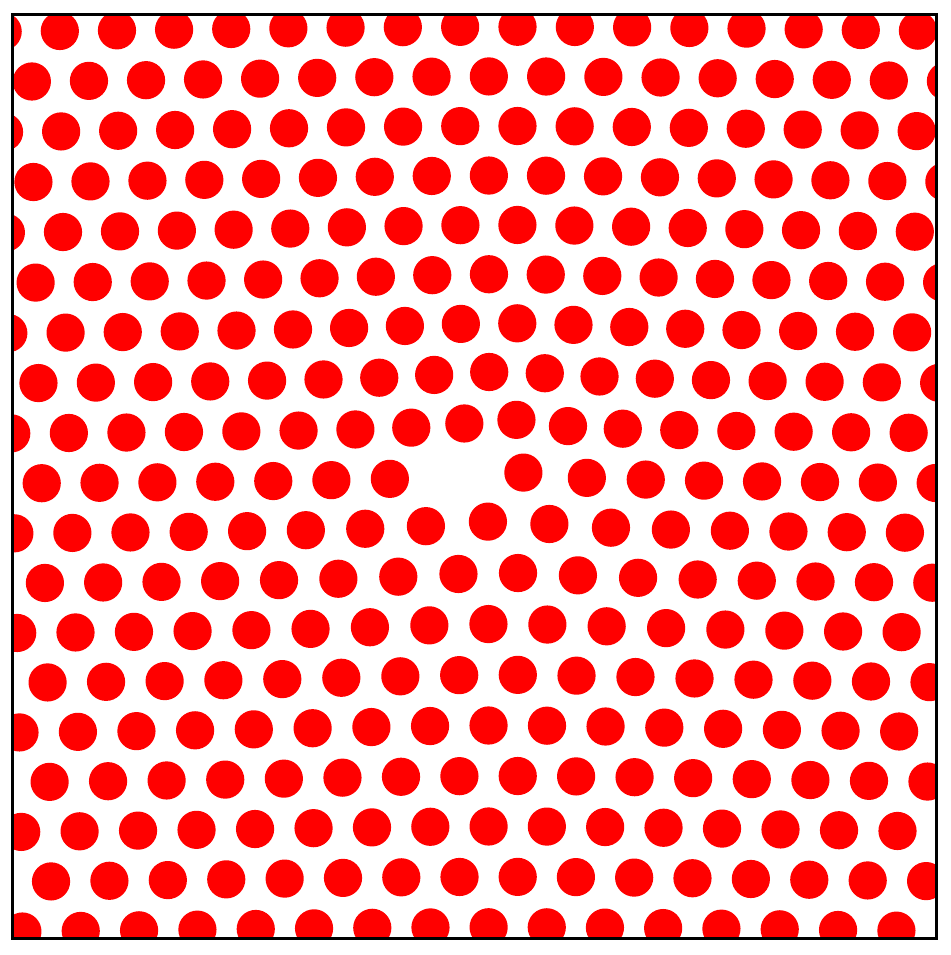}}
	\caption{Defect cores for the three test cases introduced in section \S~\ref{sec:numexp}, serving as benchmark problems for the numerical tests.}

\end{figure}

We consider three prototypical examples of localised defects; their core geometries are visualised in Figure~\ref{fig:meshes}:
\begin{itemize}
	\item {\it Point defect:} (Fig.~\ref{fig:pdmesh}) a single vacancy located at the origin; defined by $\L := \Lhom\backslash\{\pmb 0\}$; 
	\item {\it Micro-crack:} a row of five adjacent vacancies; while this is not technically a ``crack'', it serves as an example of a localised defect with an anisotropic shape;
	\item {\it Edge-dislocation:} a straight edge dislocation with dislocation line orthogonal to the plane (see~\S~\ref{sec:dislocation}); this is a paradigm example of a topological defect with long-range elastic field.
\end{itemize}

\subsection{Adaptive algorithm study}
\label{sec:numres}
In this section, we perform a detailed study of the behaviour of our adaptive algorithm for the point defect case. Analogous studies for the other two cases obtain very similar results; see Appendix \ref{apped:ns}. Applying our adaptive algorithm to the QM/MM coupling method for the point defect results in the QM/MM decomposition given in Figure \ref{fig:pdgeom}.

\paragraph{Algorithm \ref{alg:main}, {\bf Mark} and {\bf Refine} steps:} 
Figure \ref{figs:refi} visualises a prototypical {\bf Mark} step in Algorithm \ref{alg:main}, highlighting the marked elements. We observe that only element close to the QM/MM interface and close to the MM/far-field interface are marked for refinement (i.e., model refinement or domain enlargement). 
The marked elements close to the QM/MM interface (top-right) belong to $\Mqm$, while the remaining elements close to the MM/FF interface belong to $\Mmm$. The {\bf Refine} step sets updates $\RQM = \max_{\ell\in \Mqm\cap \L} \dist(\ell)$ and $\RMM = \max_{\ell\in \Mmm\cap \L} \dist(\ell)$ respectively.

\begin{figure}
	\begin{center}
		\subfigure[QM/MM Decomposition]{
			\label{fig:pdgeom}
			\includegraphics[height=4.5cm]{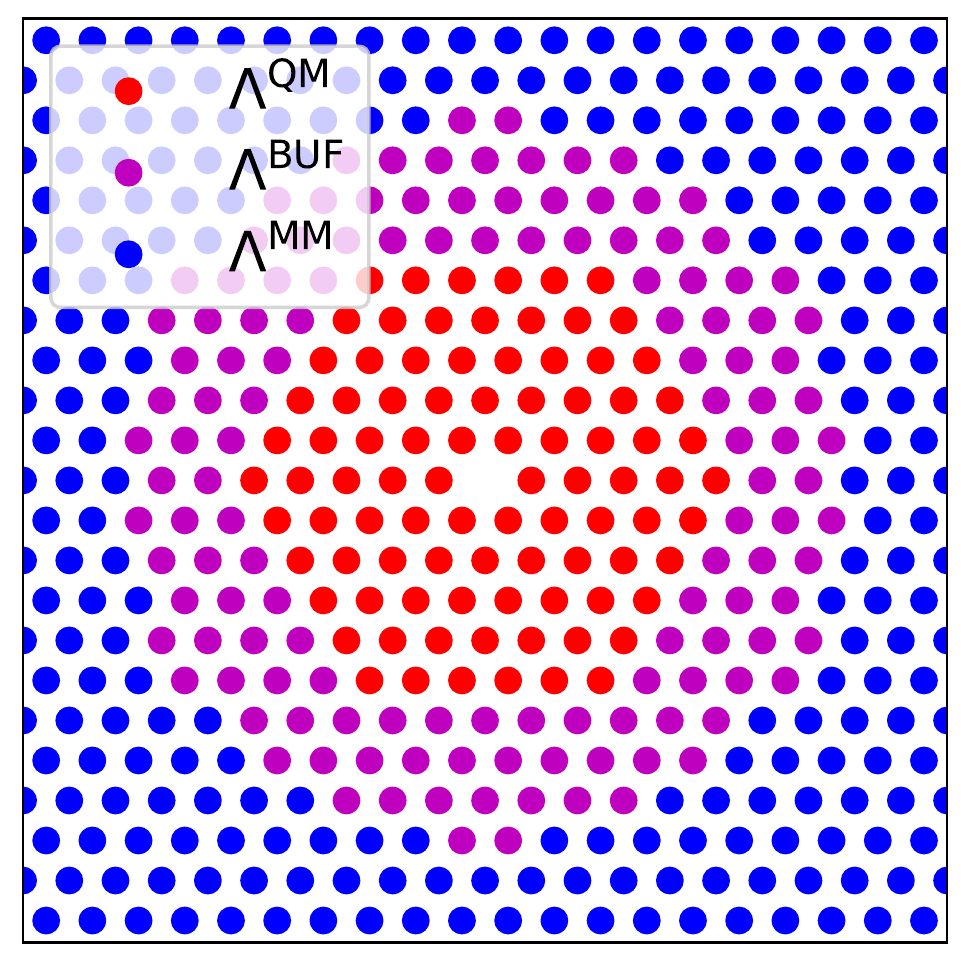}
		}
		\hspace{0.5cm} 
		\subfigure[Mark-Refine Steps]{
			\label{figs:refi}
			\includegraphics[height=4.5cm]{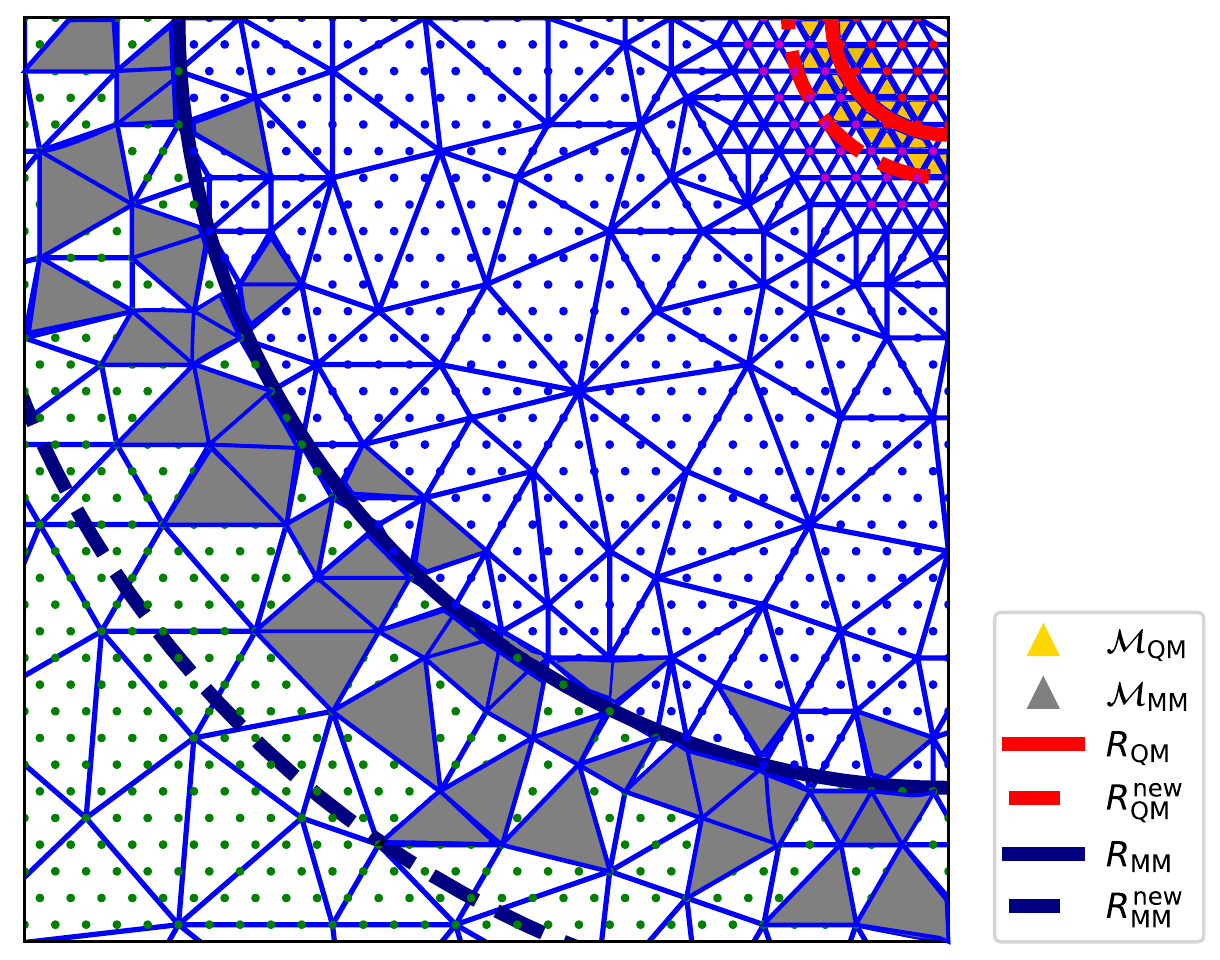}
		}
		\caption{(a) Adaptively constructed QM/MM decomposition for the vacancy defect example. (b) Illustration of $\Mqm$ and $\Mmm$: The elements marked gray close to the QM/MM interface (top-right) belong to $\Mqm$, and the other elements belong to $\Mmm$.}
	\end{center}
\end{figure}

\paragraph{Convergence of Algorithm~\ref{alg:adaptMesh}:} Next, we study the convergence of Algorithm~\ref{alg:adaptMesh}, which computes the estimator $\phi_\h$. First, in Figure \ref{fig:meshsteps} we visualise the evolution of the finite element mesh during refinement; observing that the mesh becomes more and more concentrated around the QM/MM and MM/FF interfaces.

\begin{figure}
	\centering
	\includegraphics[height=3.0cm]{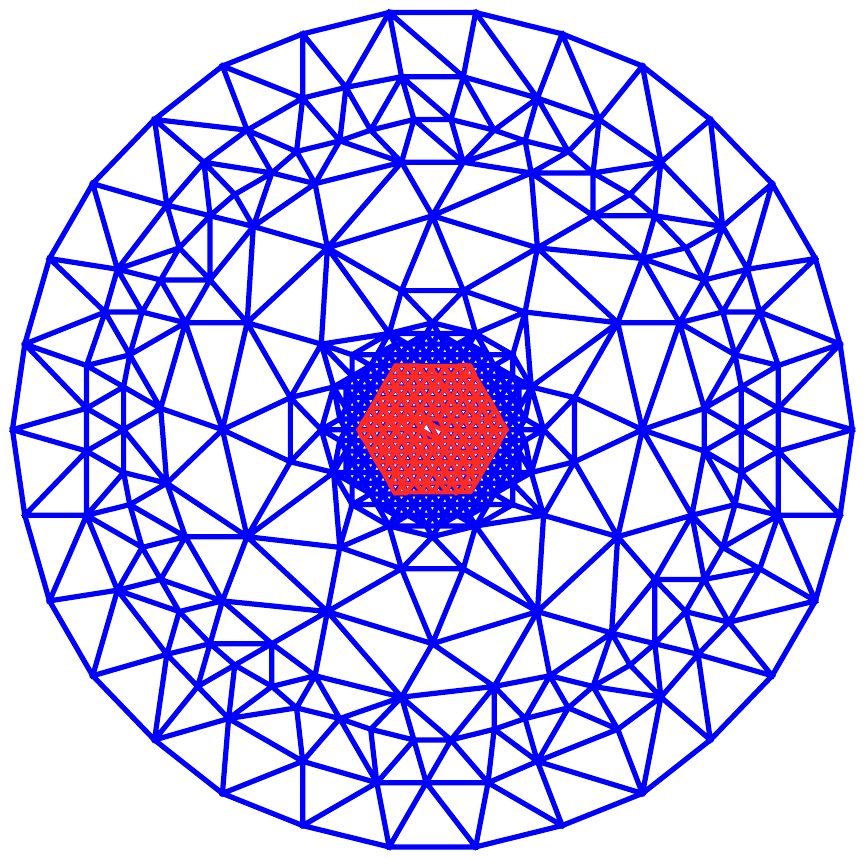}
	\includegraphics[height=3.0cm]{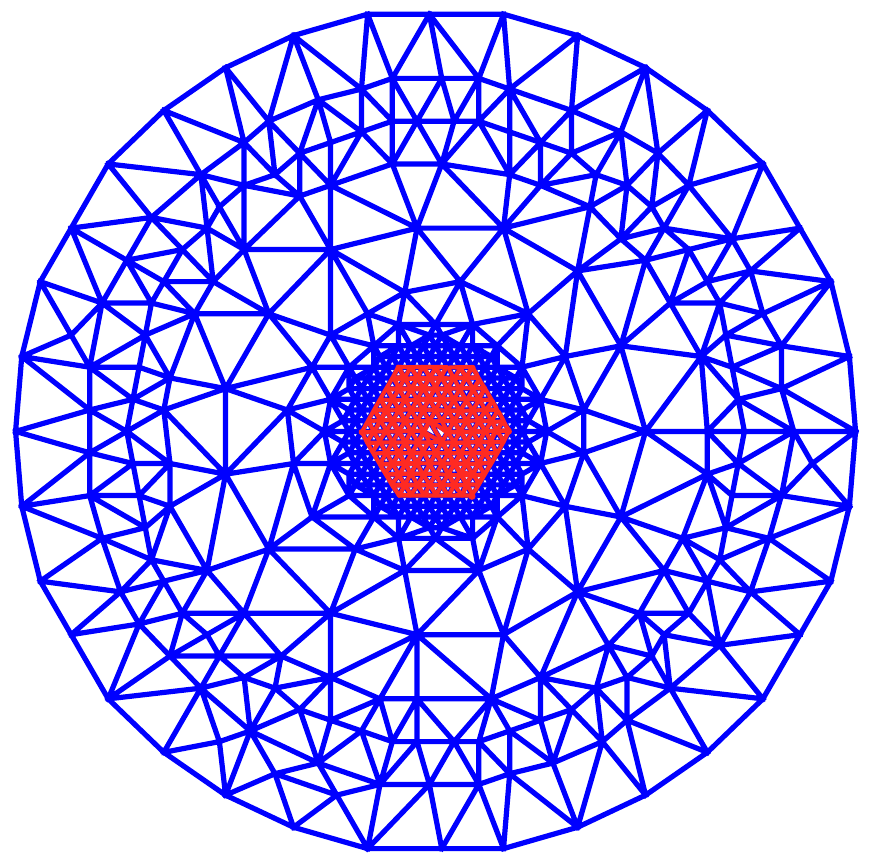}
	\includegraphics[height=3.0cm]{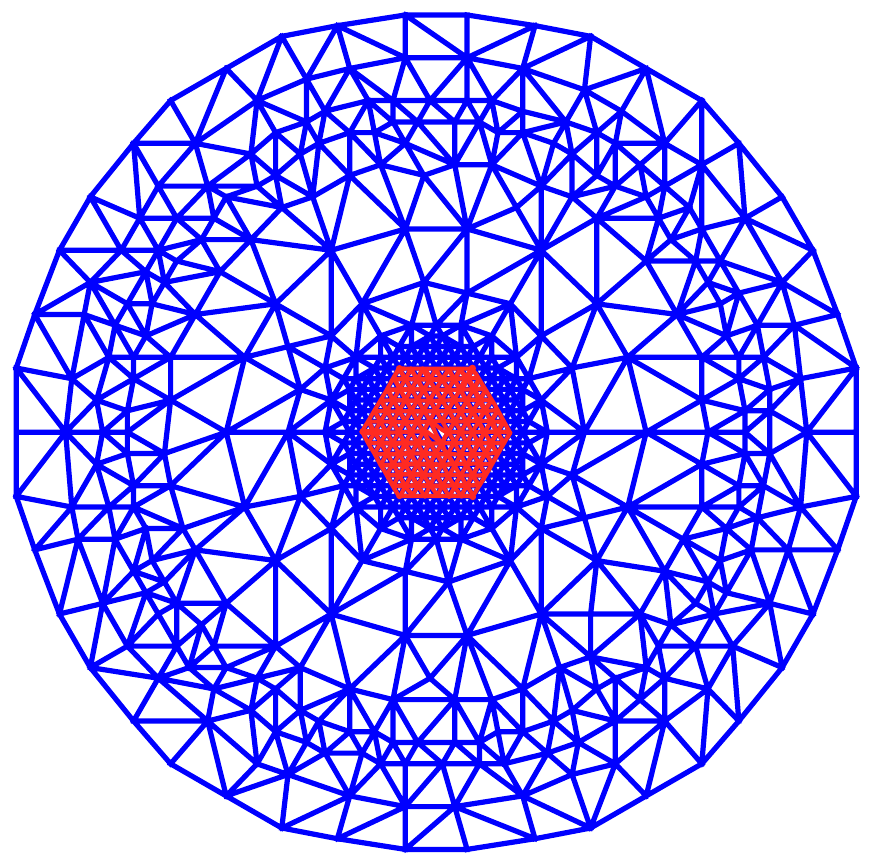}
	\includegraphics[height=3.0cm]{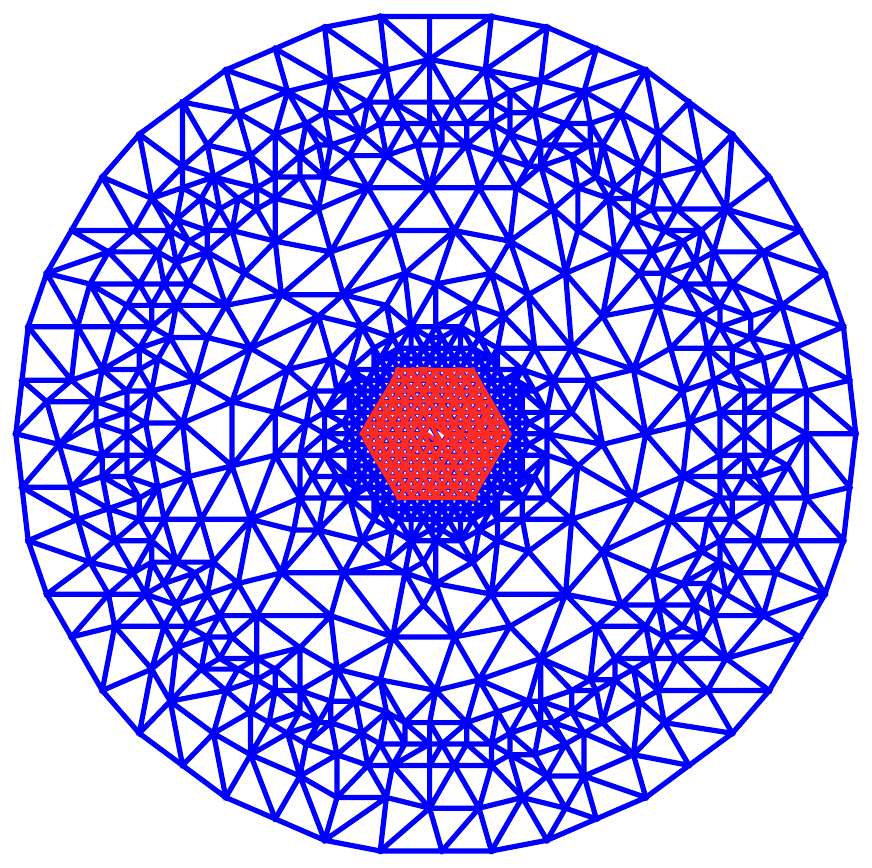}
	\caption{Evolution of QM/MM configuration during Algorithm \ref{alg:adaptMesh}.}
	\label{fig:meshsteps}
\end{figure}

\begin{figure}
	\centering
	\subfigure[Relative error $\rho_{\h,\Omega}/\eta_\h$ with increasing $R_\Omega$ while fixing $\RQM$ and $\RMM$.]{
		\label{fig:trun}
		\includegraphics[height=5cm]{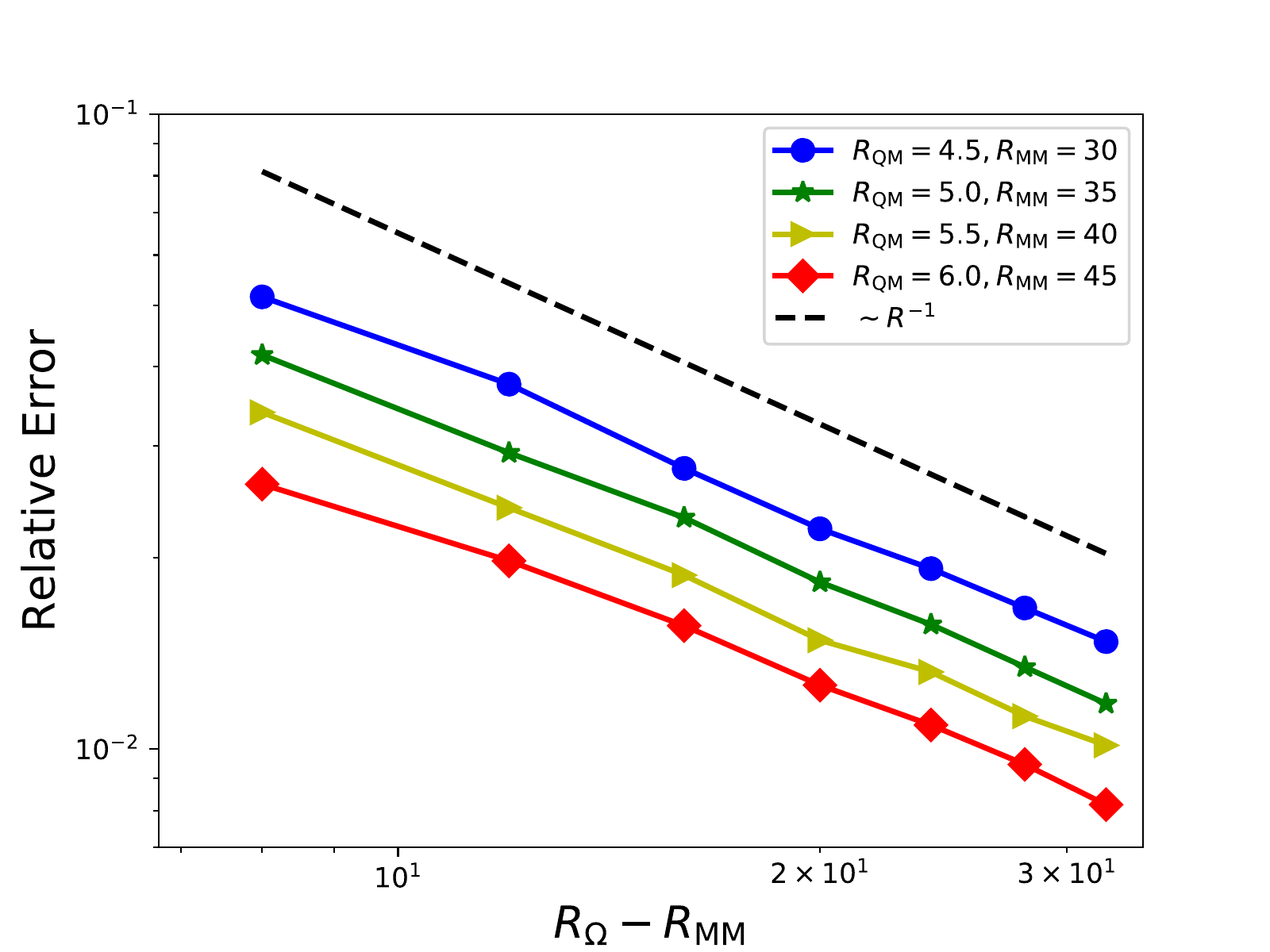}}
	\hspace{0.2cm}
	\subfigure[The values of residual indicator $\rho_{\h, \T}$, the estimator $\eta_{\h}$ and the data-oscillation during the mesh refinement.]{
		\label{fig:cg}
		\includegraphics[height=5cm]{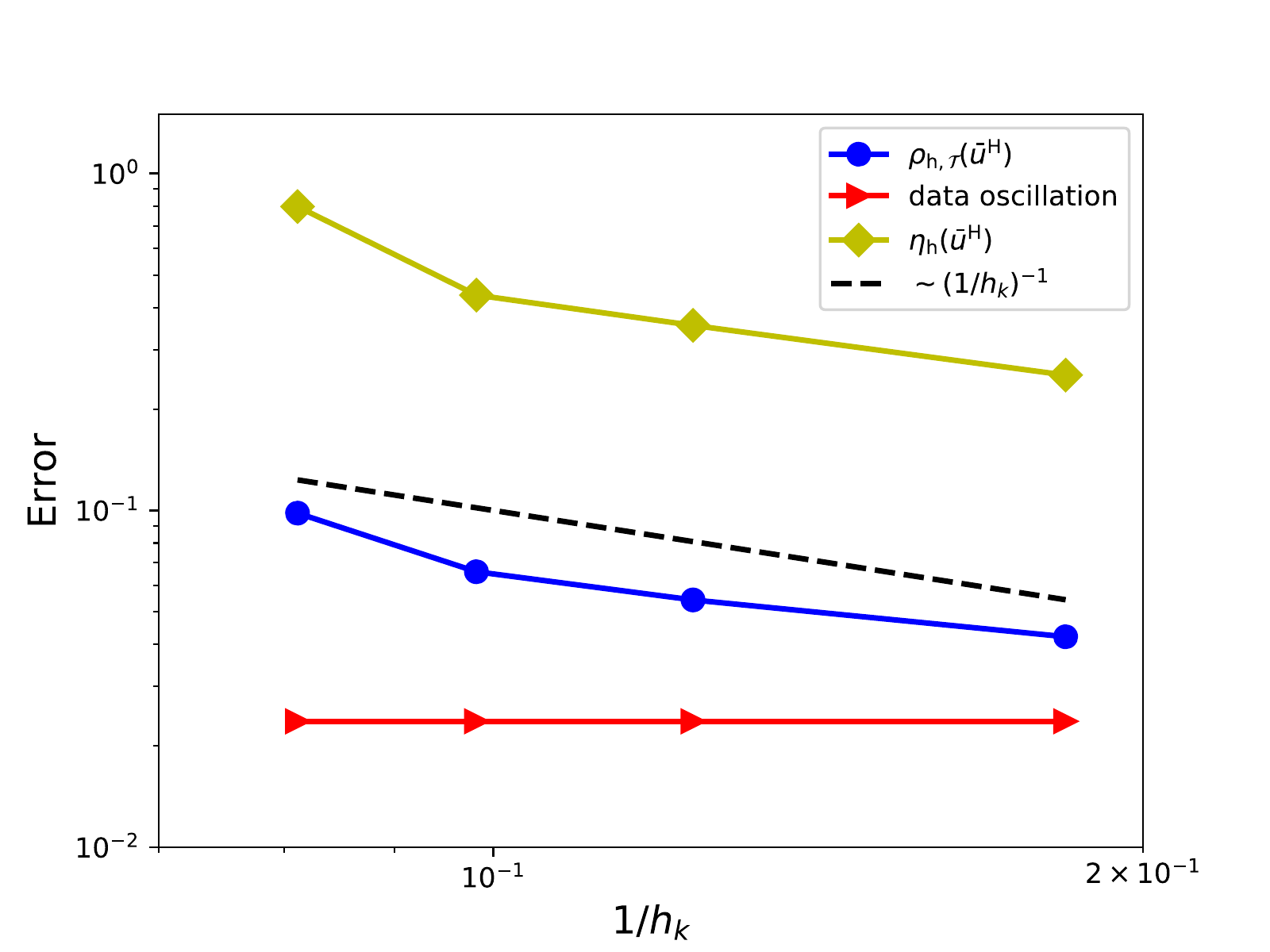}}
	\caption{Comparison of the truncation indicator, the residual indicator and the data-oscillation to the estimator. The data-oscillation error remains approximately constant since the mesh $\T^0$ is not refined.}
\label{fig:eoet}
\end{figure}

In Figure~\ref{fig:trun}, we compare the truncation error and the discretization error with respect to the error estimator $\eta_\h(\uH)$. We clearly observe the expected behaviour that this ratio tends to zero as we increase the radius of the computational domain for $\phi_\h$. Thus we can be confident to reduce the truncation error for $\phi_\h$ by increasing $R_\Omega$ while keeping $R_{\rm MM}$ fixed, as we did in Algorithm \ref{alg:adaptMesh}. 


Figure \ref{fig:cg} describes the evolution of several error indicators as the mesh is refined, indicated by $h_k:=\max\{\text{diam}(T), T\in \T^{k}\}$. 
We observe that the error indicator decays roughly linearly in $h_k$, which is consistent with the use of a linear finite element scheme. Secondly, we observe that the estimator $\rho_\h$ is significantly  smaller (on the order $15\%$) then the estimator $\eta_\h$ which clearly indicates that its contribution to the QM/MM model error can be neglected. 
Finally, we observe that the ``data-oscillation'', i.e., the approximation of $\hat{f}$ by $\hat{f}_{\T^0}$ is even smaller and may therefore also be neglected. In particular this provides a numerical verification of our analysis in \S~\ref{sec:dataosc}. In combination, these observations suggest that the QM/MM a posteriori error estimator is both efficient and reliable, in theory as well as in practise.

\paragraph{The estimator tolerance $\tau_{\rm est}$:} Finally, we study the sensitivity of the estimator $\eta_\h$ to the refinement tolerance $\tau_{\rm est}$, which is the primary input parameter into Algorithm \ref{alg:adaptMesh}. To that end we wish to compare $\eta_\h$ against the idealised estimator $\|\nabla\phi\|_{L^2}$. As it is not computable we compare instead against an estimator $\eta_{\rm a}$ computed analogously to $\eta_\h$ but where $\T^\h = \T^0$ coincide with the atomistic mesh. That is, the only error that remains is the domain truncation error which we have already shown to be small compared to the discretisation error.

While {\em in theory} we have found that $\eta_\h$ provides a reliable and efficient bound, we have found that in practise it is important to obtain an finer resolution to obtain an accurate estimate on the model error. Figure \ref{fig:Singleape_difftau} shows that, with $\tau_{\rm est} = 1.0$, the accuracy of the discretized Poisson solver effects the adaptive process significantly. The remaining panels in Figure~\ref{fig:Singleape_difftau} demonstrate that only mild reductions in $\tau_{\rm est}$ lead to excellent agreement between the coarse and idealised estimators.

\begin{figure}
	\centering
	\subfigure[$\tau_{\rm est}=1.0$]{
		\label{fig:tau1}
		\includegraphics[height=3.4cm]{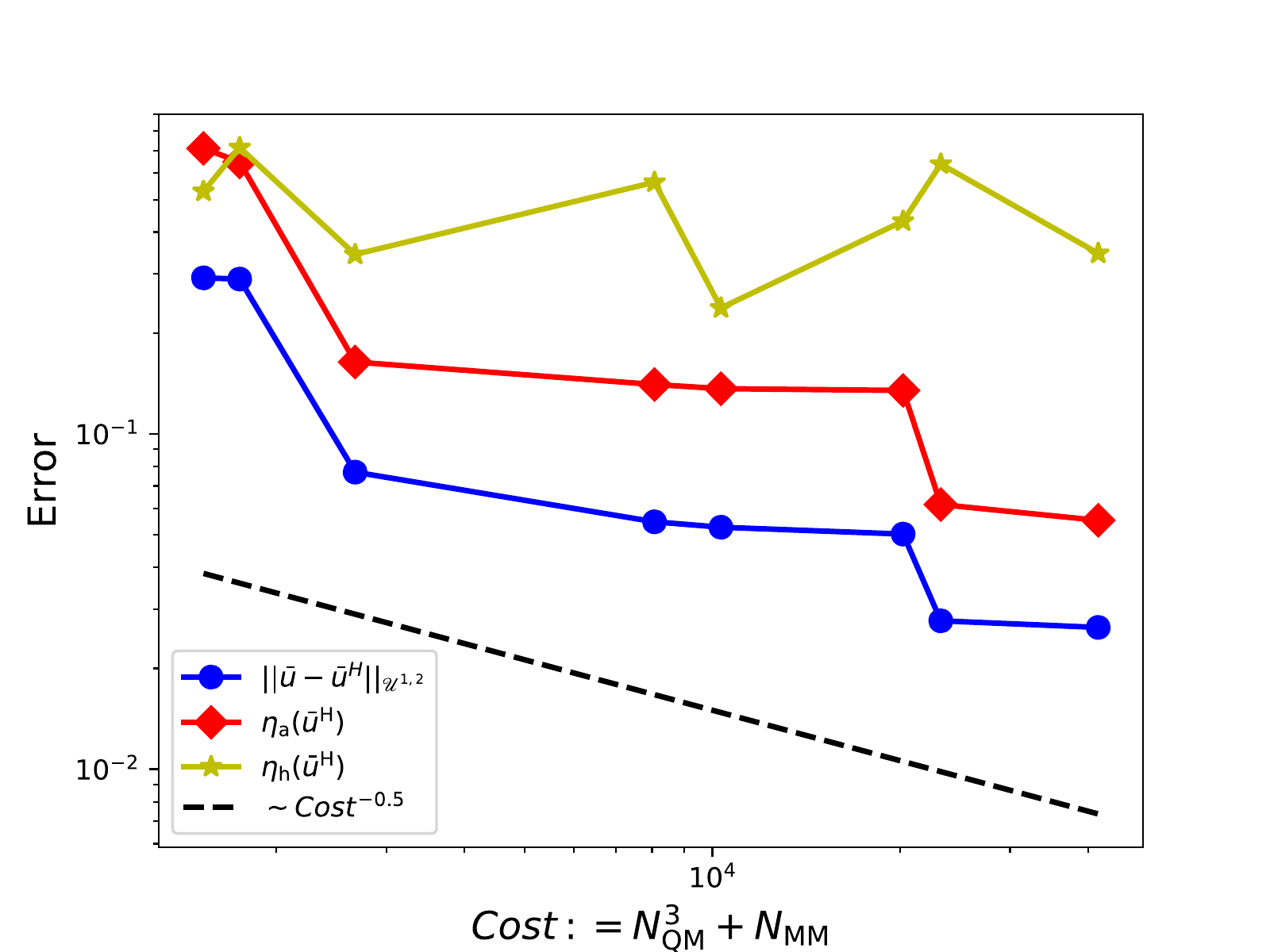}}
	\subfigure[$\tau_{\rm est}=0.3$]{
		\label{fig:tau03}
		\includegraphics[height=3.4cm]{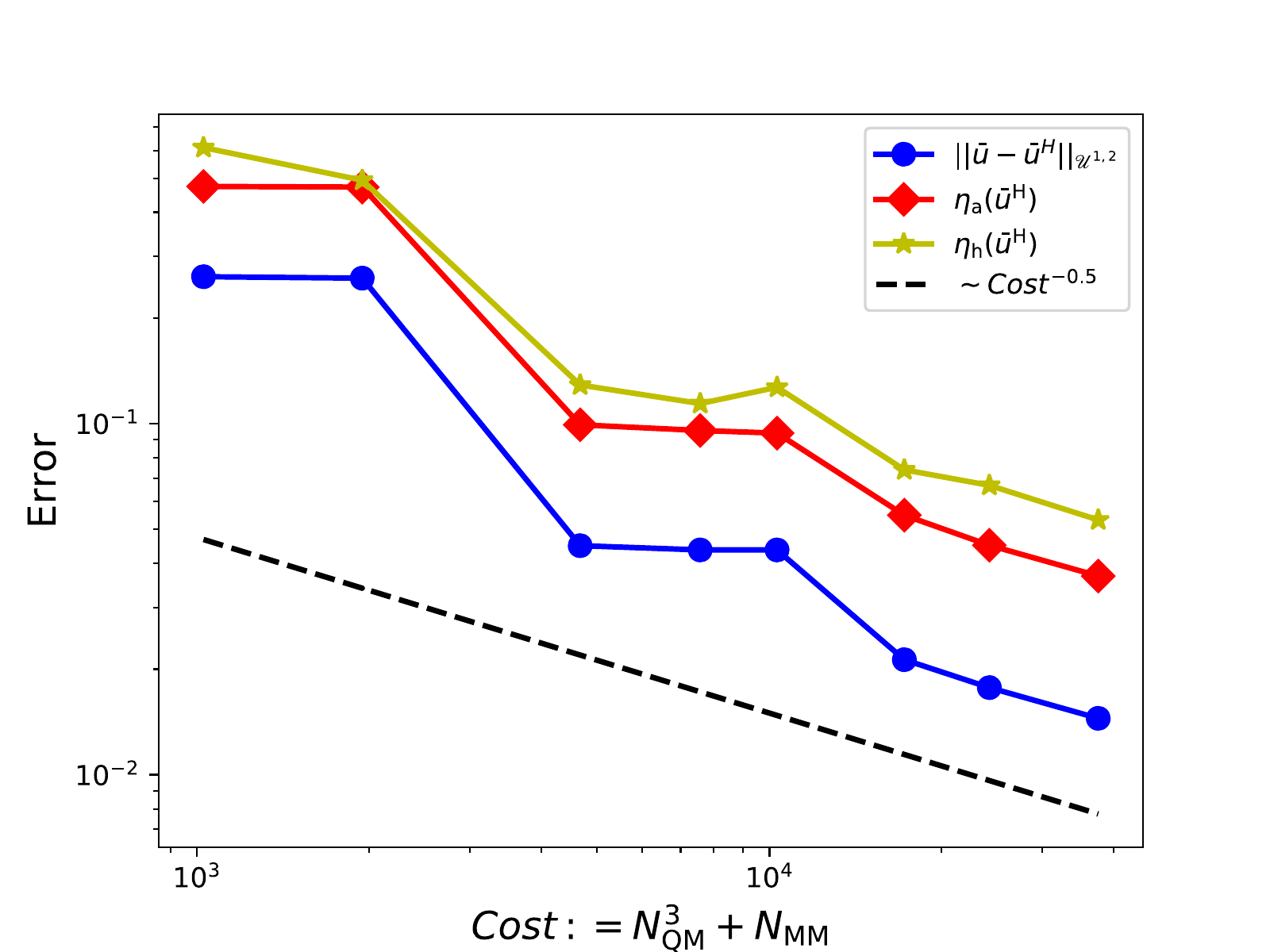}}
	\subfigure[$\tau_{\rm est}=0.1$]{
		\label{fig:tau01}
		\includegraphics[height=3.4cm]{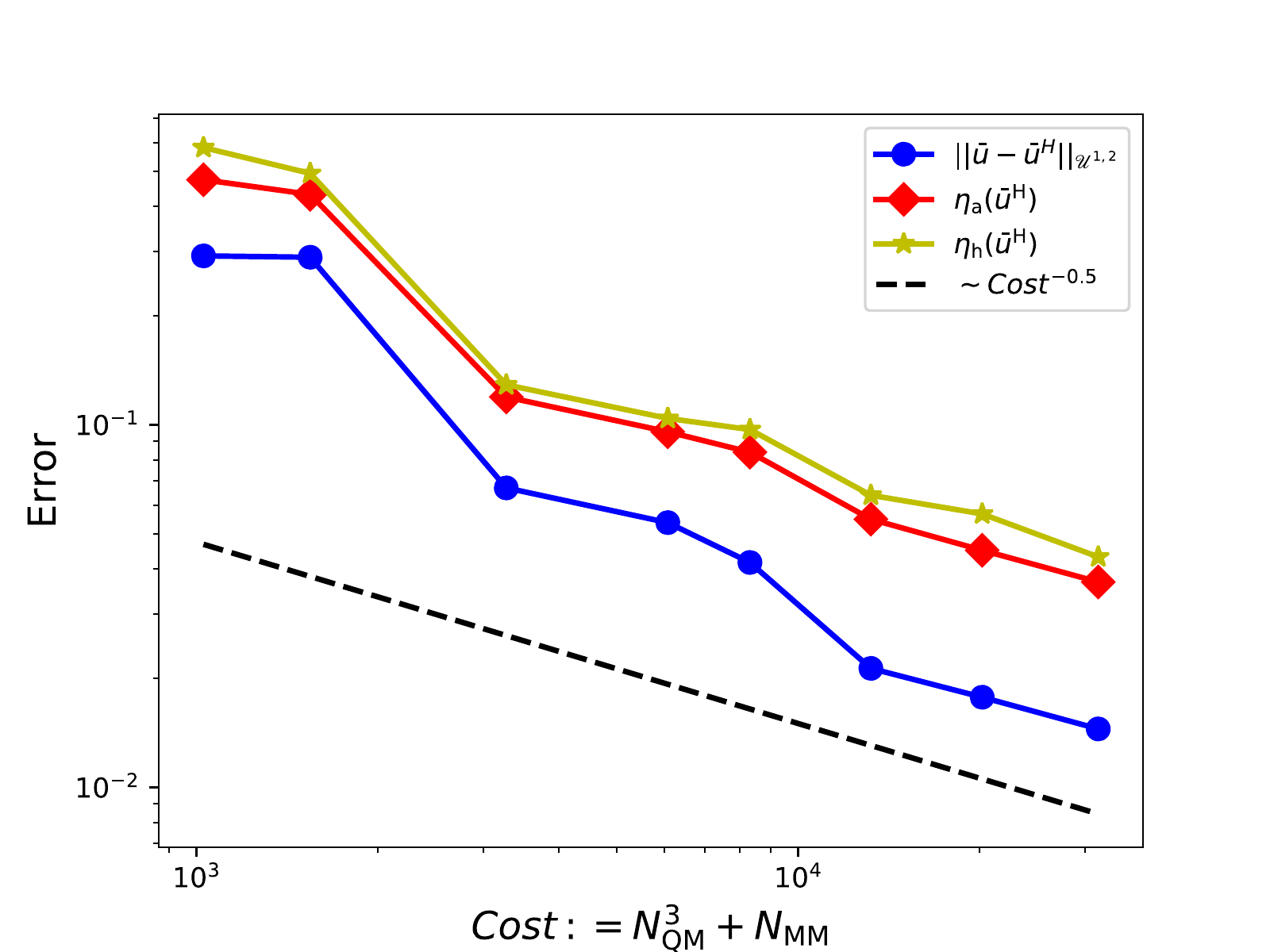}}
	\caption{QM/MM errors and error indicators, plotted against steps in the adaptive QM/MM Algorithm~\ref{alg:main}. Different stopping criterion $\tau_{\rm est}$ employed in Algorithm~\ref{alg:adaptMesh} can lead to qualitatively different behaviour in the QM/MM model refinement.}
	\label{fig:Singleape_difftau}
\end{figure}

\subsection{Convergence rates}
\label{sec:numconv}
Finally, we study the convergence of the Algorithm~\ref{alg:main}, for all three benchmark problems introduced in \S~\ref{sec:numexp}. 
Let $\NQM$ and $\NMM$ represent the numbers of atoms in the QM and MM regions respectively,
In each {\em solve} for $\uH$, the computational cost is $O(\NQM^3 + \NMM)$, as the cost to solve the QM (tight binding) model scales cubically and the cost to solve the MM model scales linearly with respect to the number of atoms. 
In Figure \ref{fig:Conv}(a,b,c) we therefore plot the approximation error $\|u-\uH\|_{\UsH}$, and the estimators $\eta_\h(\uH), \eta_\a(\uH)$ against this estimate of computational cost with $\tau_{\rm est}=0.3$. 
We observe two things: Firstly, difference between the practise estimator $\eta_\h$ and the (nearly) ideal estimator $\eta_{\rm a}$ is marginal, confirming our analysis and preliminary experiments that $\eta_\h$ provides an efficient and reliable estimator for the QM/MM model residual. 
Secondly, we observe that the estimators follow the trend of the approximation error fairly closely, but overestimate by anything between a factor that ranges from 2.74(Point defect), 3.57(Micro-crack) and 2.34(Edge-dislocation).

In Figure~\ref{fig:Conv}(d,e,f), the ratio of $N_{\rm QM}$ and $N_{\rm MM}$ during the adaptation process is shown, demonstrating that our adaptive algorithm automatically approaches the {\em quasi-optimal} cost splitting between QM and MM regions predicted by the {\it a priori} error analysis~\cite{chen15b}.

\begin{figure}
	\centering
	\subfigure[Point defect]{
		\label{fig:qmmmape}
		\includegraphics[height=3.4cm]{tau03.pdf}}
	\subfigure[Micro-crack]{
		\label{fig:qmmmapec}
		\includegraphics[height=3.42cm]{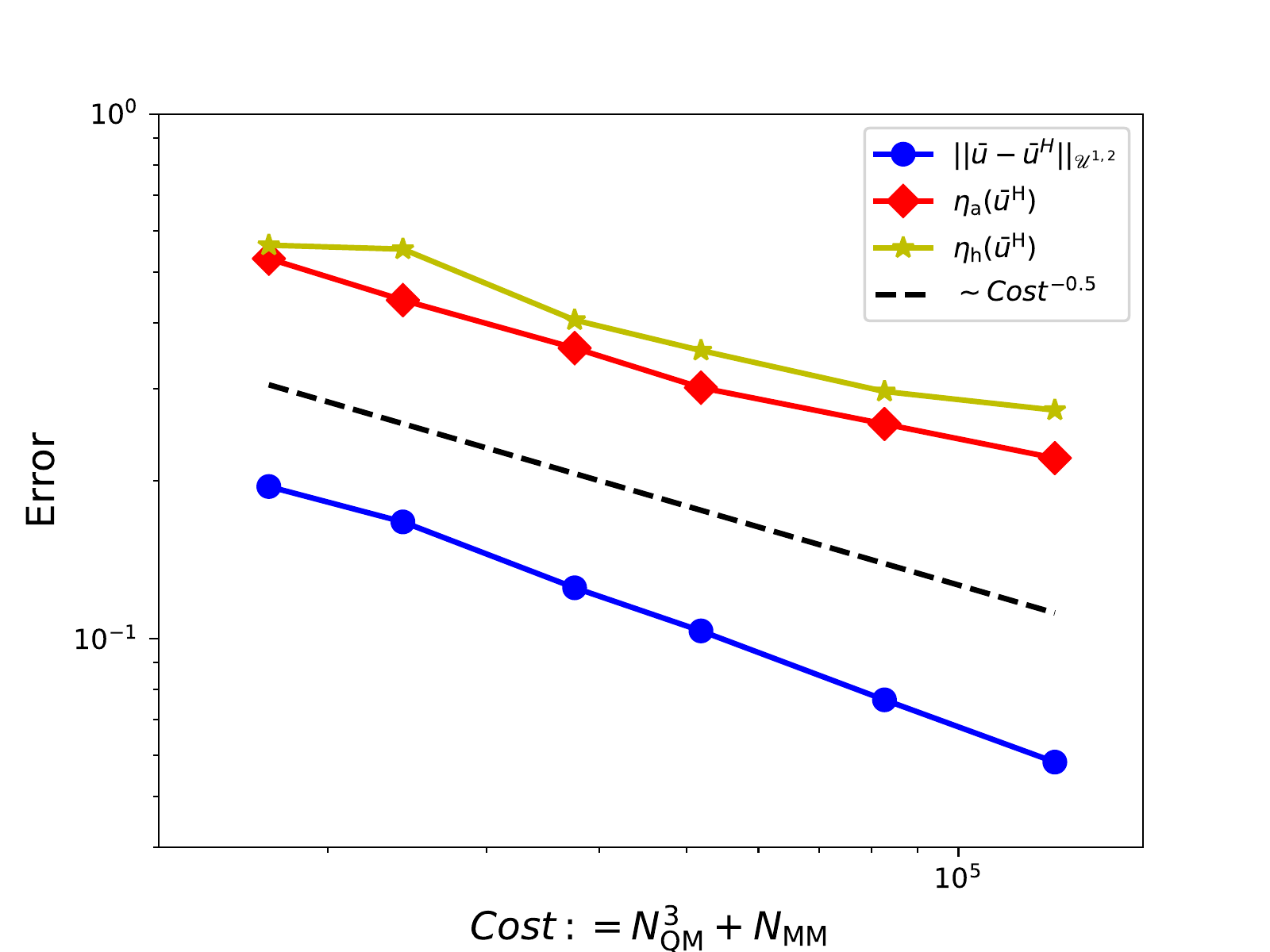}}
	\subfigure[Edge-dislocation]{
		\label{disape}
		\includegraphics[height=3.52cm]{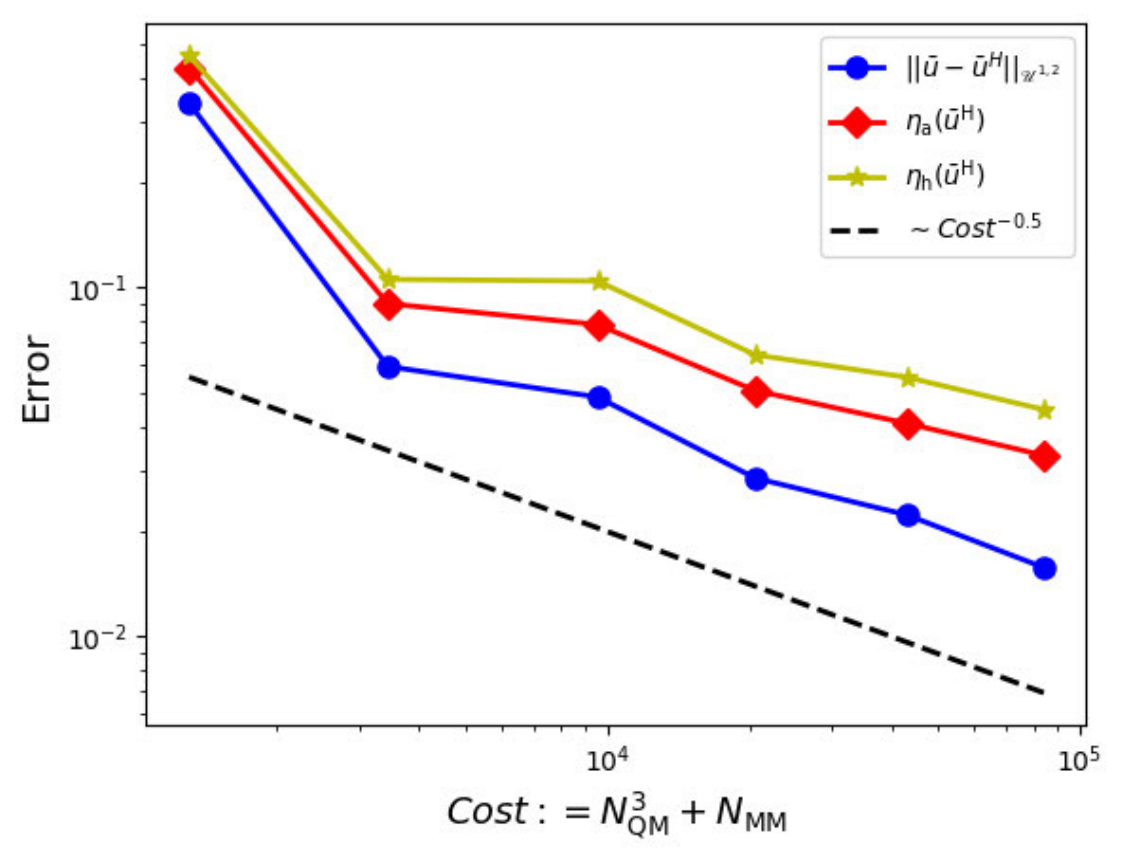}}
	\subfigure[Point defect]{
		\label{fig:qmmmrela}
		\includegraphics[height=3.67cm]{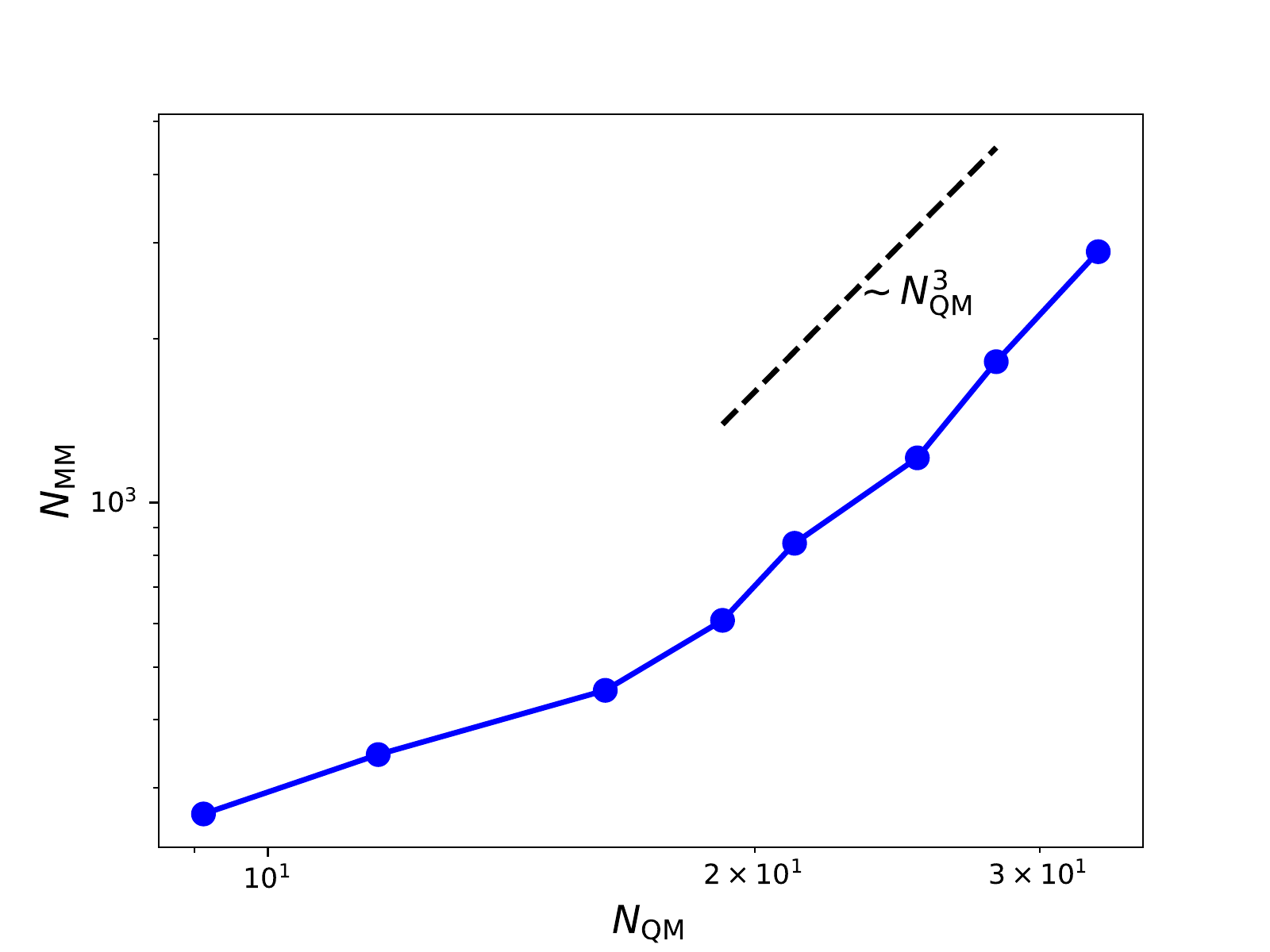}}
	\subfigure[Micro-crack]{
		\label{fig:qmmmrelac}
		\includegraphics[height=3.30cm]{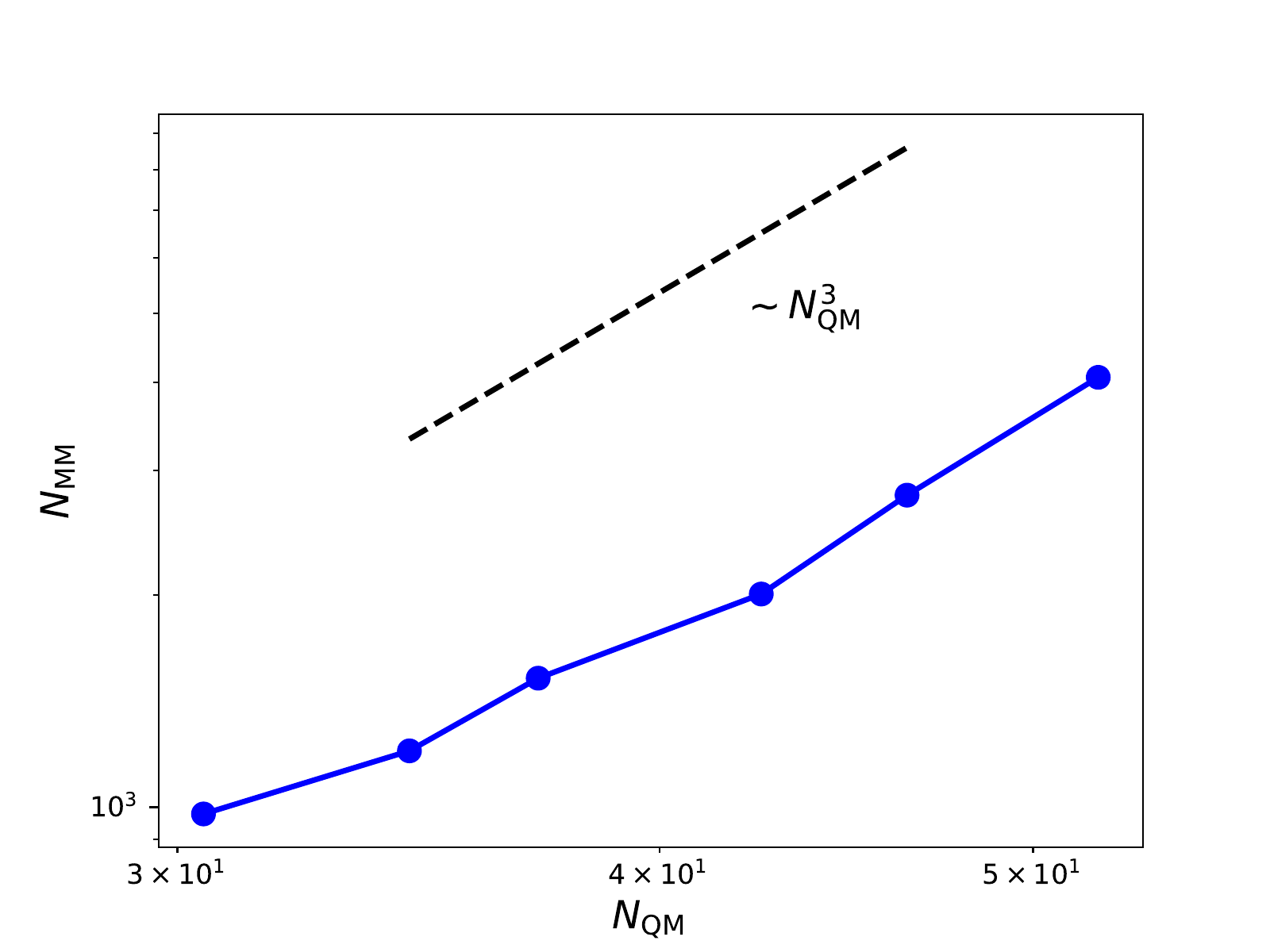}}
	\subfigure[Edge-dislocation]{
		\label{fig:dispath}
		\includegraphics[height=3.30cm]{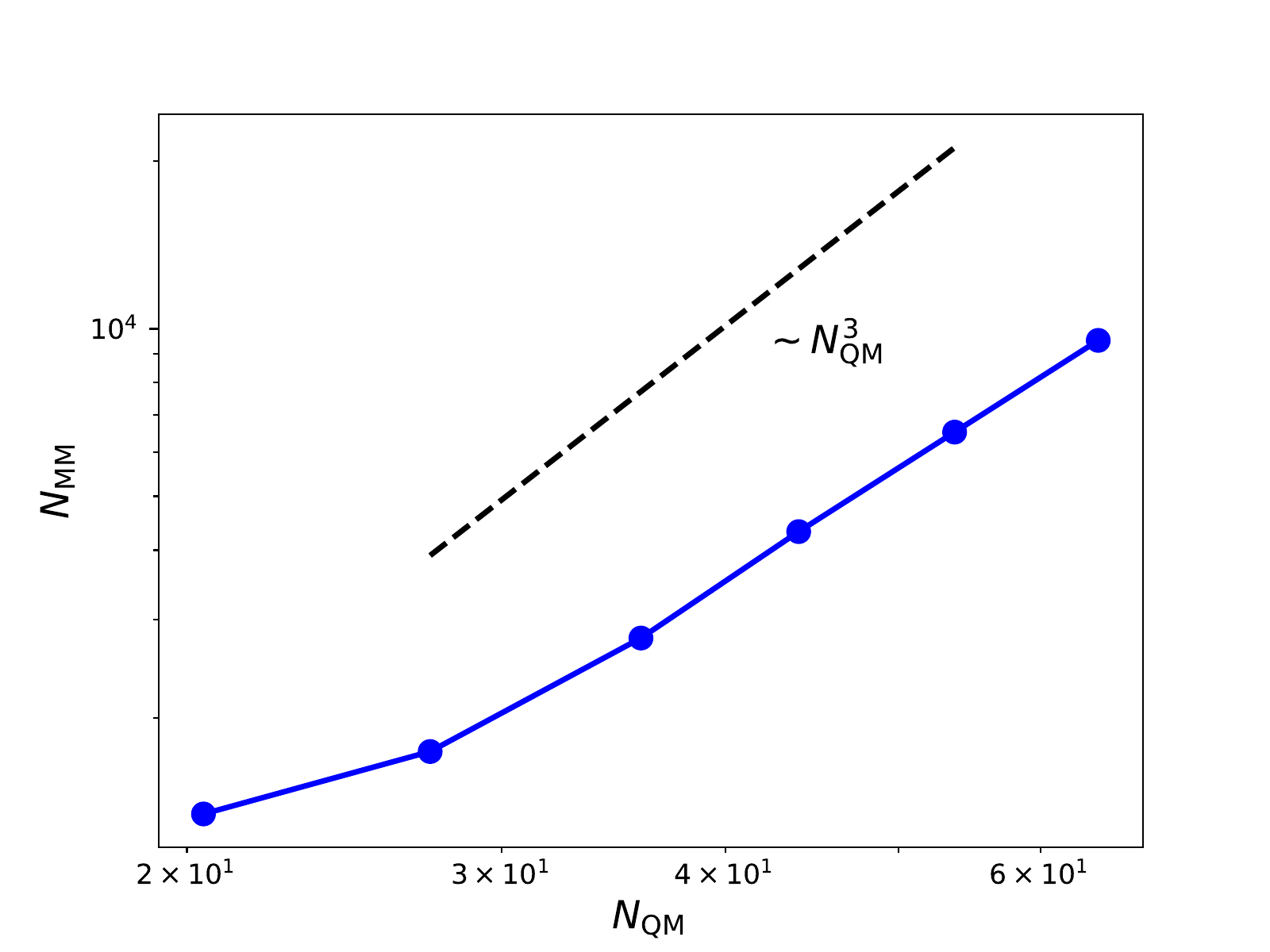}}
	\caption{Convergence of the adaptive Algorithm \ref{alg:main} and the scaling of QM and MM degrees of freedom during the adaptation process.}
	\label{fig:Conv}
\end{figure}

\section{Extensions and Remarks}
After establishing the main theoretical ideas in \S~\ref{sec:posteriori} and demonstrating their utilities in \S~\ref{sec:numer}, we now provide additional analyses of additional issues that are helpful to understand the properties of our adaptive algorithm. 

\def\Rg{\mathcal{R^{\rm BUF}_{\ell}}}
\def\uu{u_0^{\rm H}}
\def\LI{\Lambda^{\rm I}}

\subsection{Data Oscillation}
\label{sec:dataosc}
In this section, we analyse the {\em oscillation factor} $F_{\rm osc}$ in Theorem \ref{th:mainresult} with respect to the decay of the residual force $|f_{\ell}(\uH)|$, under two additional assumptions:
\begin{itemize}
    \item The initial triangulation $\T^0$ grows at most linearly with respect to the distance from the defect core, that is, 
    \begin{eqnarray}\label{eq:mesh}
    h_0(x)\lesssim |x|, \quad  \text{with}~ h_0(x):= \textrm{diam}(T)~\text{for}~x\in T\in\T^0. 
    \end{eqnarray}
    \item There exists a $C^{2,1}$ interpolant $\check{f}$ such that
\begin{align}\label{eq:decay}
	|\nabla^j \check{f}| \lesssim 
	\left\{\begin{aligned}
		|x|^{-\alpha_1-j}, & \quad x \in \Omega^{\rm FF} \\ 
		|x|^{-\alpha_2-j}, & \quad x \in \Omega^{\rm MM}
	\end{aligned}
	\right. \quad \textrm{for}~j=0,1,2,
\end{align}
    where $\alpha_1, \alpha_2 >0$ are the decay rates of $|f_\ell|$ in FF region and MM region respectively. This assumption is motivated by the results in \cite{2013-defects,chen15b,chen19}.
\end{itemize}

Under these assumptions we obtain the following general result, which can be interpreted in the context of the benchmark examples in \S~\ref{sec:numer}.

\begin{proposition}\label{prop:dataosc}
If \eqref{eq:mesh} and \eqref{eq:decay} are satisfied, then 
\begin{align}\label{eq:prop51}
\|\hat{f}-\hat{f}_{\T^0}\|_{(\dot{H}^1)^*} \lesssim (\log\RO)^t\Big(\RO^{-\beta_1}+\RQM^{-\beta_2} + \RMM^{-\beta_1}\Big)\|\hat{f}\|_{(\dot{H}^1)^*}+R_\Omega^{1+d/2}\exp(-\eta_1\rcut),
\end{align}
where $\beta_{i} = \alpha_{i} - d/2 - 1$ and $t = 1$ if $d=2$, $t=0$ if $t \geq 3$. 
\end{proposition}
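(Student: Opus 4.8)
The plan is to turn the dual-norm oscillation into an $L^2$ estimate on the residual difference and then to exploit the prescribed algebraic decay \eqref{eq:decay}. Both $\hat{f}$ and $\hat{f}_{\T^0}$ have vanishing mean, so for $v\in\dot{H}^1(\R^d)$ one may pair against $v-c$ for an arbitrary constant $c$. I will use two different inequalities depending on whether the contribution to $\hat{f}-\hat{f}_{\T^0}$ is compactly supported or genuinely decaying. For the algebraically decaying part the natural tool is a \emph{weighted} estimate,
\[
	\big\langle g, v \big\rangle = \big\langle g, v-c\big\rangle
	\leq \big\| \,|x|\, g \big\|_{L^2}\,\big\| |x|^{-1}(v-c) \big\|_{L^2}
	\lesssim (\log\RO)^{t}\,\big\| \,|x|\, g \big\|_{L^2}\,\|\nabla v\|_{L^2},
\]
where the last step is the weighted Hardy inequality on $B_{\RO}$: for $d\geq 3$ it holds with an absolute constant ($t=0$), while for $d=2$ the critical-scaling Hardy estimate fails and one loses a power of $\log\RO$ ($t=1$) when summing the annular averages of $v$. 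For a contribution that is \emph{supported in} $\Omega=B_{\RO}$ I will instead pair against $v-\bar v_\Omega$ and use the plain Poincar\'e inequality $\|v-\bar v_\Omega\|_{L^2(\Omega)}\lesssim\RO\|\nabla v\|_{L^2(\Omega)}$, which carries \emph{no} logarithm in any dimension; this is what keeps the logarithm off the exponential term in \eqref{eq:prop51}.

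With these tools in hand I would split
\[
	\hat{f}-\hat{f}_{\T^0}
	= \big(\hat{f}-\hat{f}^{\rm ex}_{\T^0}\big)
	+ \big(\hat{f}^{\rm ex}_{\T^0}-\hat{f}_{\T^0}\big),
\]
where $\hat{f}^{\rm ex}_{\T^0}$ denotes the coarse interpolant \eqref{eq:fhat_T0} built from the \emph{exact} forces $f_\ell$ rather than the patch-truncated $\tilde f_\ell$, with its own mean-zero shift. The second bracket is supported in $\Omega$, collects the node-wise replacements $f_\ell\mapsto\tilde f_\ell$ for $\ell\in\N^0$, and by the locality of the site energy (Lemma~\ref{lemma-thermodynamic-limit}) satisfies $|f_\ell-\tilde f_\ell|\lesssim\exp(-\eta_1\rcut)$. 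Applying the plain Poincar\'e bound and using $|\Omega|\sim\RO^{d}$ gives $\|\hat{f}^{\rm ex}_{\T^0}-\hat{f}_{\T^0}\|_{(\dot{H}^1)^*}\lesssim\RO\cdot\RO^{d/2}\exp(-\eta_1\rcut)=\RO^{1+d/2}\exp(-\eta_1\rcut)$, the final term of \eqref{eq:prop51}.

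For the first bracket I would compare both the fine lattice interpolant $\hat{f}$ and the coarse interpolant $\hat{f}^{\rm ex}_{\T^0}$ against the smooth representative $\check{f}$ of \eqref{eq:decay}, which reduces the estimate to the $P_1$ interpolation bound $\|\check{f}-I_{\T^0}\check{f}\|_{L^\infty(T)}\lesssim h_T^2\,\|\nabla^2\check{f}\|_{L^\infty(T)}$ on each element $T\subset\Omega$ plus the exterior tail of $\hat{f}$ on $\R^d\setminus\Omega$. Estimating these on a dyadic annular decomposition and inserting the mesh condition $h_T\lesssim|x|$ from \eqref{eq:mesh} together with \eqref{eq:decay}, the pointwise error at radius $r$ is of order $r^{2}\cdot r^{-\alpha_i-2}=r^{-\alpha_i}$, so the weighted $L^2$ mass of an annulus scales as $r^{2-2\alpha_i+d}$. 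Because $\beta_i=\alpha_i-d/2-1>0$, the geometric series is dominated by its inner radius; summing over the MM region $\RQM\le|x|\le\RMM$ (rate $\alpha_2$), the FF region $\RMM\le|x|\le\RO$ (rate $\alpha_1$), and the tail $|x|>\RO$ (rate $\alpha_1$) produces $\RQM^{-\beta_2}$, $\RMM^{-\beta_1}$, and $\RO^{-\beta_1}$ respectively. Feeding these weighted norms into the first (weighted Hardy) inequality attaches the factor $(\log\RO)^{t}$, and normalising the implicit prefactor in \eqref{eq:decay} against $\|\hat{f}\|_{(\dot{H}^1)^*}$ (which sets the overall magnitude of the residual field, accounting also for the rescaling $c_\ell$ in \eqref{eq:QM_force}) yields the relative form in \eqref{eq:prop51}.

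The main obstacle I anticipate is the two-dimensional weighted estimate: the clean reduction $\|g\|_{(\dot{H}^1)^*}\lesssim\||x|g\|_{L^2}$ holds only for $d\geq 3$, and in $d=2$ one must work on the truncated ball and carefully control the annular averages $\bar v_{A_k}$ of the test function, whose telescoping sum over the $O(\log\RO)$ dyadic shells is exactly what produces the logarithmic factor $t=1$. A secondary difficulty is the interpolation analysis through the defect core, where $\L$ is not crystalline and $\check{f}$ is merely $C^{2,1}$; there the mesh $\T^0$ is atomistic, so no coarse interpolation error is incurred and only the exponentially small force-truncation contribution must be tracked, which keeps that region harmless.
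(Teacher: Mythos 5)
Your proposal is correct and takes essentially the same route as the paper's proof: the same three error contributions (exterior tail of $\hat{f}$, coarse interpolation error measured against the smooth representative $\check{f}$, and the exponentially small patch-truncation error), the same tools (a weighted Poincar\'e/Hardy inequality with log-corrected weight in $d=2$ for the algebraically decaying parts, and the plain Poincar\'e inequality for the truncation part so that no logarithm attaches to $R_\Omega^{1+d/2}e^{-\eta_1\rcut}$), and the same use of \eqref{eq:mesh} and \eqref{eq:decay} through $\mathcal{P}_1$ interpolation bounds summed over the MM, FF, and exterior regions. The only difference is the order of the decomposition — you split off the exact-force coarse interpolant first, whereas the paper first separates $\R^d\setminus\Omega$ from $\Omega$ and then splits truncation from interpolation error inside $\Omega$ — which is purely organizational: the resulting pieces and their estimates coincide.
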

\begin{proof} 
    See Appendix~\ref{sec:dataosc_proof}.
\end{proof}
 
The general consequence of this result is that, if $\rcut$ is sufficiently large such that the second term on the right hand side of \eqref{eq:prop51} is negligible or at least proportional to the first term, then we have
\begin{align*}
    F_{\rm osc}:=\frac{\|\hat{f}-\hat{f}_{\T^0}\|_{(\dot{H}^1)^*}}{\|\hat{f}\|_{(\dot{H}^1)^*}} \lesssim (\log\RO)^t\Big(\RO^{-\beta_1}+\RQM^{-\beta_2} + \RMM^{-\beta_1}\Big).
\end{align*}
 
We now discuss assumption \eqref{eq:decay} for the test problems in \S~\ref{sec:numexp}.
According to \cite{chen15b, chen19} and the proof in Appendix \ref{appdx:force_decay}, under the reasonable additional assumption that $|D^j\uH| \eqsim |D^j\bar{u}|$ for $j=0,1$, we have the following a priori estimates for $\ell \in \LMM$ and $\ell \in \LFF$:
\begin{enumerate}
\item Point defects: $d=2$ or $d=3$, $k=2$ in the definition of MM site energy \eqref{taylor}, \begin{itemize}
    \item $|f_{\ell}(\uH)| \eqsim (1+|\ell|)^{-3d}$ for 
    $\ell \in \LMM$, 
    \item $|f_\ell(\uH)| \eqsim \exp(-\gamma|\ell|)$ for $\ell \in \LFF$.
    \end{itemize}
\item Dislocation: $d=3$, $k=3$ in \eqref{taylor}, 
\begin{itemize}
    \item $|f_{\ell}(\uH)| \eqsim |\ell|^{-4}$ for $\ell \in \LMM$, 
    \item $|f_{\ell}| \eqsim |\ell|^{-3}$ for $\ell \in \LFF$.
    \end{itemize}
\end{enumerate}
In both cases, it therefore follows that $F_{\rm osc}$ is controlled (provided $\rcut$ is sufficiently large).

\begin{remark}
    We have made a simplifying assumption in the foregoing analysis by assuming that the decay of the residual forces is uniform across the MM/FF interface region. Due to a loss of symmetry this need not be true; see the discussion in Appendix \ref{appdx:force_decay} and the jump for the residual forces around the interface in \cite[Figure 4(b)]{CMAME}. However, in practice, we select a finer finite element mesh around the MM/FF interface to construct $\T^0$, as depicted in Figure \ref{fig:meshsteps}, to reduce the contribution of the force oscillation at the interface.
\end{remark}



\subsection{Stable $L^2$-projection}
\label{sec:stableL2}

Under stronger requirements on the regularity of the finite element mesh $\T^\h$ we can use stability of the $L^2$-projection to obtain an alternative {\it a posteriori} bound on $\|\nabla \phi - \nabla \phi_\h\|_{L^2}$, which entirely removes the term $\rho_\h$ from the resulting QM/MM residual bound.  Concretely, let 
\[
    M_\h^{L_2} := \sup_{v \in H^1_0(\Omega) \setminus \{0\}} 
    \frac{ \| \nabla \Pi_\h v \|_{L^2(\Omega)} }{\| \nabla v \|_{L^2(\Omega)}}
\]
be the operator norm of the $L^2$-projection. Estimating $M_\h^{L_2}$ is a classical and well-studied problem in numerical analysis. For example, if the mesh is locally quasi-uniform and the volume of neighbouring elements does not change too drastically, then one can obtain uniform bounds on $M_\h^{L_2}$ \cite{Bramble:2001}. Here, we shall not be further concerned with the precise conditions under which such bounds are obtained but only study the consequences.

Let $v \in \dot{H}^1(\R^d)$, and let $v_{R} = T_{\RO} v$, then 
\begin{align*} 
&\int_{\R^d} \hat{f}_{\T^0} \cdot v - \nabla \phi_\h \cdot \nabla v \dx \nonumber \\
=& \int_{\R^d} \hat{f}_{\T^0} \cdot (v - v_{R}) \dx + \int_{\R^d} \hat{f}_{\T^0} \cdot (v_{R} - \Pi_\h v_{R}) \dx  
- \int_{\R^d} \nabla \phi_\h \cdot (\nabla v - \nabla \Pi_\h v_{R})  \dx \nonumber \\
=:&\, T_1 + T_2 + T_3.
\end{align*}
Arguing as in the proof of Lemma~\ref{lem:approxEst_alt} we immediately obtain 
\[
    T_1 + T_3 \lesssim 
    \Big( 
    \RO \| \hat{f}_{\T^0} \|_{L^2(\Omega \setminus B_{\RO/2})}
    + (1 + M_\h^{L_2}) \| \nabla \phi_\h \|_{L^2} \Big) \, \| \nabla v \|_{L^2}.
\]

To estimate $T_2$ we note that $\int_{\Omega} f_\h \cdot (v_{R} - \Pi_\h v_{R}) \dx = 0$ for all $f_\h$ in the finite element space. Let 
\[
    \hat{f}_{\T^0}^0
    := 
    \sum_{\ell \in \mathcal{N}^0 \setminus \partial\Omega} \hat{f}_{\T^0}(\ell) \zeta_\ell^{\rm c}(x), 
\]
i.e. we simply set the nodal values on the boundary to zero. Then we have $\hat{f}_{\T^0}^0 = \hat{f}_{\T^0}$ in all elements $T$ except those that touch the boundary. For the latter it is straightforward to prove that $\|\hat{f}_{\T^0}^0 - \hat{f}_{\T^0}\|_{L^2(T)} \lesssim  \|\hat{f}_{\T^0}\|_{L^2(T)}$. This allows us to estimate 
\begin{align*}
    T_2 &= \int_{\Omega} (\hat{f}_{\T^0} - \hat{f}_{\T^0}^0) \cdot (v_{R} - \Pi_\h v_{R}) \dx \\ 
    &\lesssim 
    \| \hat{f}_{\T^0} - \hat{f}_{\T^0}^0 \|_{L^2}
    \| v_{R} \|_{L^2} 
    \lesssim 
    \RO \|  \hat{f}_{\T^0} \|_{L^2(\Omega \setminus B_{\RO/2})}\|\nabla v\|_{L^2},
\end{align*}
where we have used the Poincar\'{e}--Friedrichs inequality in the final step. 
%
Indeed, we have grossly overestimated here, but there is no advantage in a sharper estimate.  

In summary we we obtain a simpler (less sharp) {\it a posteriori} error bound for the estimator,  
\begin{align}\label{eq:L2Proj_EstPhih}
	\| \nabla \phi - \nabla \phi_{\rm h} \|_{L^2}
	\lesssim  R_\Omega \|\hat{f}_{\mathcal{T}^0}\|_{L^2(\Omega\setminus B_{\RO/2})}  + (M_\h^{L_2}+1)\|\nabla \phi_\h\|_{L^2}.
\end{align}

In light of the analysis in \S~\ref{sec:dataosc} the first term, $\|\hat{f}_{\mathcal{T}^0}\|_{L^2(\Omega\setminus B_{\RO/2})}$, is naturally interpreted as a data-error term, i.e., it is reasonable to define a modified oscillation factor 
\[
    F_{\rm osc}' := 
     \frac{\| \hat{f} - \hat{f}_{\T^0} \|_{(\dot{H}^1)^*} + \RO \| \hat{f}_{\T^0} \|_{L^2(\Omega \setminus B_{\RO/2})}}{\| \hat{f} \|_{(\dot{H}^1)^*}}.
\]

With this alternative bound the main result, Theorem \ref{th:mainresult}, could be reformulated as
\begin{equation} \label{eq:main:first-L2}
	(1+F_{\rm osc}')^{-1} \|\nabla \phi_\h\|_{L^2}
	\lesssim
	\| \delta \E(\uH) \|_{(\UsH)^*}
	\lesssim
	(1+F_{\rm osc}' + M_\h^{L_2})
	\|\nabla \phi_\h\|_{L^2}.
\end{equation}
The estimate \eqref{eq:L2Proj_EstPhih} will also be numerically verified in Figure \ref{fig:cg}. However, our numerical experiments (see in particular Figure~\ref{fig:Singleape_difftau}) show that it is {\em in practise} important to obtain a good resolution of the estimator $\phi_\h$; hence we  have chosen to retain the estimate \eqref{eq:combined_phi_phih_estimator_alt} in the adaptive algorithm~\ref{alg:adaptMesh}.

\subsection{Stress of the QM Model}
\label{sec:stress}
As our final remark on the algorithms derived in the foregoing section, we will make the connection between the mechanical notion of stress and the a posteriori estimator defined through $\phi$ in \eqref{eq:defnvarphi}. 
This is motivated by the stress based formulation of the atomistic/continuum coupling method and corresponding stress based a posteriori estimators \cite{Or:2011a, PRE-ac.2dcorners, 2012-ARMA-cb, Wang:2017, Liao2018}. For a general discussion of atomistic stress we refer to \cite{AdTa:2010}.

To derive a QM stress we extend the technique used in \cite{2012-lattint, 2012-ARMA-cb, 2014-bqce} to QM models, we restrict the discussion to  the homogeneous lattice $\L \equiv \Lhom$. To map between a defective reference configuration $\L$ and the corresponding homogeneous lattice $\Lhom$ one can use \cite[Lemma D.1]{chen19}, and extend this discussion to defective lattices.

After a straightforward computation following \cite{2012-ARMA-cb, 2014-bqce}, we have the identity
\begin{eqnarray}\label{eq:stress_force}
\int_{\R^d}\Sigma(\uH)(x) :  \nabla v(x)\dx = \int_{\R^d} \-f(\uH)(x) \cdot v(x)\dx, \quad \forall v \in \UsH,
\end{eqnarray}
where the {\em stress} $\Sigma(\uH)$ is defined by
\begin{eqnarray*}
\Sigma(\uH)(x) := \sum_{\ell \in \L}  \sum_{\rho \in \Rg} \chi_{\ell,\rho}(x) V_{\ell, \rho}(D\uH(\ell)) \otimes \rho 
\end{eqnarray*}
with $V_{\ell}\big(Du\big) :=  E_{\ell}(y_0 + u)$ and ``smeared bonds'' $\chi_{\ell,\rho}(x) := \int_0^1 \zeta(\ell+t\rho-x)\dt$. For the sake of completeness we give the derivation of $\Sigma(\uH)$ in  Appendix~\ref{sec:apd_stress}.

Formally, $\Sigma(\uH)$ defines an analogue of the second Piola stress tensor for the QM model. We note that the QM model has an infinite interaction range $\Rg$. However, thanks to the locality results Lemma \ref{lemma-thermodynamic-limit}, QM stress $\Sigma$ is exponentially localised. This makes a direct  connection to the atomistic stress.

According to the Helmholtz-Hodge decomposition \cite{Or:2011a, paula2016}, $\Sigma(\uH)$ can be decomposed as a sum of two orthogonal components:
\begin{equation}
\label{eq:HHD}
\Sigma(\uH) = \nabla \phi + \nabla\times \psi,
\end{equation}
with $\phi \in \UsH$, $\psi\in \UsH$.
$\nabla\phi\in L^2$ is called the ``curl-free" component, and $\nabla\times \psi \in  L^2$ is divergence-free in the weak sense, i.e., $\int_{\R^d}\nabla\times\psi(x) : \nabla v(x) \dx = 0$.

Combining Theorem \ref{th:mainresult} and Lemma \ref{th:ctsphi}, we have
\begin{align*}
\eta_\h^2(\uH) \lesssim \| \delta \E(\uH) \|_{-1}^2 \lesssim \|\nabla \phi\|_{L^2}^2\leq \|\nabla\phi\|_{L^2}^2 + \|\nabla\times\psi\|_{L^2}^2=\|\Sigma(\uH)\|_{L^2}^2.
\end{align*}
Therefore, $\|\Sigma(\uH)\|_{L^2}$ provides an upper bound for the approximation error.

In \eqref{eq:HHD}, we can uniquely define $\phi$ by $\Delta \phi = \nabla \cdot \Sigma(\uH)$ (in the weak sense). On the other hand, we can choose an arbitrary divergence-free component $\nabla\times\psi$ in $\Sigma(\uH)$ to satisfy \eqref{eq:stress_force}. Therefore, $\Sigma(\uH)$ in the sense of \eqref{eq:var_QM} is not unique, and we will consider the following problem to obtain a uniquely defined QM stress tensor,
\begin{equation}\label{eq:minSigma}
\bar{\psi} \in \arg\min_{\psi \in  \UsH} \big\{ \|\Sigma(\uH)\|_{L^2} = \|\nabla\phi + \nabla\times\psi\|_{L^2} \big\}.
\end{equation}

A straightforward calculation and the orthogonality of the two components of Helmholtz-Hodge decomposition lead to $\nabla\times\bar{\psi} = 0$. Hence, we denote the corresponding uniquely defined QM stress tensor as
\begin{equation}\label{eq:stress_grad}
\Sigma^0(\uH) := \nabla\phi, \quad {\rm for}\; \phi \in \UsH.
\end{equation}

By choosing the unique QM stress tensor through \eqref{eq:stress_grad} and the inverse interpolation operator $(I^{\rm h}_1)^{-1}: \UsH(\Lhom) \rightarrow \UsH(\L)$, we recover the equation \eqref{eq:defnvarphi} which is used in the a posteriori estimates in the previous sections.

\def\Pqm{\mathcal{P}_{\rm QM}}

\section{Conclusions}
\label{sec:conclusion}
\setcounter{equation}{0}
We proposed a residual based {\it a posteriori} error estimator, and designed an accompanying model-adaptive algorithm,  for
QM/MM multi-scale approximations of crystalline solids with embedded defects.
We have shown both theoretically and in three benchmark problems that the estimator provides both upper and lower bounds for the approximation error.  

Both our estimator and our algorithm are in many respects agnostic about the approximations made to the reference electronic structure model, suggesting possible extensions to other approximation schemes and application areas. 

\paragraph{Outlook: Anisotropic geometries.} 
Most but not all steps of our analysis and algorithm are independent of the geometry of the material defect and computational domain, hence we briefly mention where some refinements are required to achieve full generality of the analysis and applicability of the algorithms to more complex defect configurations (e.g. cracks, partials separated by a stacking fault, etc.):


The first potential problem is that the Poincar\'{e} constant (cf. \eqref{eq:trmod}) in anisotropic domains depends on the domain shape. A simple and general class of domains that can still be treated with minor changes to the analysis are those obtained by smooth deformations of a ball. The Poincar\'{e} constant can then be estiamted in terms of the deformation gradient and the volume element. 


The algorithmic challenges are more significiant: Algorithm \ref{alg:main} adjusts only $\RQM$ and $\RMM$ to refine the model. This prevents us from capturing significant anisotropy in the defect core, elastic field, or indeed defect nucleation. To consider such generalisations, we need to evolve the QM/MM and MM/FF interfaces anisotropically. A possible way forward is to think of this as a free interface problem based on the error distribution, which may lead to robust implementation of model adaptivity.  

Both the theoretical and practical aspects discussed above will be explored in future work.

\clearpage 

\appendix

\section{Supplementary Material}
\label{sec:deri_u0}

\subsection{Far-field boundary condition for dislocations:}
\label{sec:dislocation}
For dislocations, we follow the constructions in \cite{chen19, 2013-defects} and prescribe $u_0$ as follows. Let $\L\subset\R^2$, $\hat{x}\in\R^2$ be the position of the dislocation core and $\Upsilon := \{x \in \R^2~|~x_2=\hat{x}_2,~x_1\geq\hat{x}_1\}$ be the ``branch cut'', with $\hat{x}$ chosen such that $\Upsilon\cap\Lambda=\emptyset$.
We define the far-field predictor $u_0$ by
\begin{eqnarray}\label{predictor-u_0-dislocation}
u_0(x):=\ulin(\xi^{-1}(x)),
\end{eqnarray}
where $\ulin \in C^\infty(\R^2 \setminus \Upsilon; \R^d)$ is the solution of continuum linear elasticity (CLE)
\begin{eqnarray}\label{CLE}
\nonumber
\mathbb{C}^{j\beta}_{i\alpha}\frac{\partial^2 u^{\rm lin}_i}{\partial x_{\alpha}\partial x_{\beta}} &=& 0 \qquad \text{in} ~~ \R^2\setminus \Upsilon,
\\
u^{\rm lin}(x+) - u^{\rm lin}(x-) &=& -{\rm b} \qquad \text{for} ~~  x\in \Upsilon \setminus \{\hat{x}\},
\\
\nonumber
\nabla_{e_2}u^{\rm lin}(x+) - \nabla_{e_2}u^{\rm lin}(x-) &=& 0 \qquad \text{for} ~~  x\in \Upsilon \setminus \{\hat{x}\},
\end{eqnarray}
where the forth-order tensor $\mathbb{C}$ is the linearised Cauchy-Born tensor (derived from teh potential $V$, see \cite[Section 7]{2013-defects} for more detail).
\begin{eqnarray}
\xi(x)=x-\burg_{12}\frac{1}{2\pi}
\eta\left(\frac{|x-\hat{x}|}{\hat{r}}\right)
\arg(x-\hat{x}),
\end{eqnarray}
with $\arg(x)$ denoting the angle in $(0,2\pi)$ between $x$ and
$\burg_{12} = (\burg_1, \burg_2) = (\burg_1, 0)$, and
$\eta\in C^{\infty}(\R)$ with $\eta=0$ in $(-\infty,0]$ and $\eta=1$ in
$[1,\infty)$ which removes the singularity.

We mention that for the anti-plane screw dislocation, under the proper assumptions on the interaction range $\Rg$ and the potential $V$, the first equation in \eqref{CLE} simply becomes to $\Delta u^{\rm lin} = 0$ \cite{2017-bcscrew}. The system \eqref{CLE} then has the well-known solution $u^{\rm lin}(x) = \frac{{\rm b}}{2\pi}\arg(x-\hat{x})$, where we identify $\R^2 \cong \C$ and use $\Upsilon-\hat{x}$ as the branch cut for arg.

\subsection{Derivation of \eqref{eq:stress_force}, atomistic stress:}
\label{sec:apd_stress}
We first introduce the so-called localization formula (see \cite{2012-ARMA-cb})
\begin{align}\label{eq:localization}
D_{\rho} \tilde{v}(\ell)=\int_0^1\nabla_{\rho}\tilde{v}(\ell+t\rho)\dt
=\int_{\R^{d}}\int_0^1\zeta(\ell+t\rho-x)\dt\nabla_{\rho}v(x)\dx.
\end{align}
where the (quasi-)interpolation $\tilde{v}$ is defined as
\begin{align*}
	\tilde{v}(x) := (\zeta * v)(x) =  \int_{\R^d} \zeta(x-y) v(y) \d y, \end{align*}
with the nodal interpolant on $\Lhom$
\begin{align*}
	v(x) := \sum_{\ell\in\Lhom} v(\ell) \zeta(\ell-x),  \quad \text{for} ~v \in \UsH(\Lhom).
\end{align*}

In order to make the QM stress more clear, let $E_{\ell}$ be the site energy we defined in Section \ref{sec:tb},  we define $V_{\ell} : (\R^d)^{\L-\ell}\rightarrow\R$ by
\begin{eqnarray*} 
V_{\ell}\big(Du\big) :=  E_{\ell}(y_0 + u),
\end{eqnarray*}
which is possible due to its translational invariance.

It can be shown in \cite[Lemma 10]{2012-lattint} that $\~v|_{\Lhom}\in\UsH(\Lhom)$.
Hence, for any solution $\uH\in\UsH$ of \eqref{problem-e-mix}, by replacing the test function $v$ by $\tilde{v}$, the first variation of \eqref{energy-difference} is given by
\begin{align}\label{eq:var_QM}
\< \delta\E(\uH), \tilde{v} \>&=\sum_{\ell \in \L} \sum_{\rho \in \Rg} V_{\ell, \rho}(D\uH(\ell)) \cdot D_{\rho}\tilde{v}(\ell) \nonumber \\
&=\int_{\R^{d}} \sum_{\ell \in \L} \sum_{\rho \in \Rg} \rho \otimes V_{\ell, \rho}( D\uH(\ell)) \int_0^1\zeta(\ell+t\rho-x)\dt : \nabla v(x) \dx   \nonumber \\
&=: \int_{\R^{d}}\Sigma(\uH)(x) : \nabla v(x) \dx,
\end{align}
where
\begin{eqnarray}\label{eq:QM_stress}
\Sigma(\uH)(x) := \sum_{\ell \in \L} \sum_{\rho \in \Rg} \rho \otimes V_{\ell, \rho}(D\uH(\ell))\int_0^1 \zeta(\ell+t\rho-x)\dt.
\end{eqnarray}
We could also obtain that
\begin{align*}
\< \delta\E(\uH), \tilde{v} \>
=\sum_{\ell \in \L} \sum_{\rho \in \Rg} \big[ V_{\ell-\rho, \rho}(D\uH(\ell-\rho)) - V_{\ell, \rho}(D\uH(\ell)) \big] \tilde{v}(\ell)
= \int_{\R^{d}} \-f(\uH)(x) \cdot v(x) \dx.
\end{align*}
Combined with \eqref{eq:var_QM} leads to the following equation
\begin{eqnarray*}
\int_{\R^d}\Sigma(\uH)(x) :  \nabla v(x)\dx = \int_{\R^d} \-f(\uH)(x) \cdot v(x)\dx, \quad \forall v \in \UsH,
\end{eqnarray*}
which yields \eqref{eq:stress_force} exactly.

\label{sec:apd_proof}

\subsection{Proof of \bf Lemma \ref{th:ctsphi}:}
	In variational form, \eqref{th:ctsphi} reads
	\begin{equation} \label{eq:defnvarphi}
		\int_{\R^d}\nabla \phi(\uH)(x) \cdot \nabla v(x)\dx = \int_{\R^d} \-f(\uH)(x) \cdot v(x)\dx, \quad \forall v \in \dot{H}^1(\R^d).
	\end{equation}
	Existence of $\phi \in \dot{H}^1$ and uniqueness (up to shifts) are straightforward.
	It is, moreover, convenient to define an atomistic grid potential $\phi_\a \in \UsH$, by an analogous discrete Poisson equation,
	\begin{equation} \label{eq:defnphidvar}
	\int_{\R^d}\nabla \phi_\a(\uH)(x) \cdot \nabla v(x)\dx = \int_{\R^d} \-f(\uH)(x) \cdot v(x)\dx, \quad \forall v \in \UsH.
	\end{equation}
	We first prove the equivalence between $\| \nabla\phi \|_{L^2}$ and $\|\nabla \phi_\a\|_{L^2}$. The equations \eqref{eq:defnvarphi}, \eqref{eq:defnphidvar}, and a Galerkin orthogonality argument yield the inequality
	\begin{equation} \label{eq:equi_d_c_pre}
			\| \nabla \phi_{\a} \|^2_{L^2} \leq \|\nabla \phi\|^2_{L^2} = \| \nabla \phi_{\a} \|^2_{L^2} + \|\nabla \phi - \nabla \phi_\a\|^2_{L^2}.
	\end{equation}

Since $\^f\in L^2$, we have $\phi \in H^2_{\rm loc}$, and $\|\nabla^2 \phi\|_{L^2} \leq \|\Delta \phi\|_{L^2}$ which is known as the Miranda-Talenti estimate \cite{Maugeri:2000}. Applying the standard finite element a priori error analysis~\cite[\S~II.6]{braess2007finite}, we can therefore estimate the error term by
\begin{equation} \label{eq:equi_d_c_1}
		\|\nabla \phi - \nabla \phi_\a\|_{L^2} \leq C\|\nabla^2 \phi\|_{L^2} \leq C\|\Delta \phi\|_{L^2} =  C\|\-f\|_{L^2}.
\end{equation}
Here we have used the fact that the mesh size for the atomistic grid is uniformely bounded by a fixed constant.

We now estimate $\|\-f\|_{L^2}$ by $\|\nabla \phi_\a \|_{L^2}$. For any lattice function $v\in\UsH$, we have
\begin{align*}
\|\nabla v\|_{L^2} \leq C\|Dv\|_{\ell_\gamma^2}
	\leq
	C\|v\|_{\ell^2} \leq C\|v\|_{L^2},
\end{align*}
and therefore, for any $f \in\ell^2$, we have the dual bound
\begin{equation} \label{eq:duality_bound_fL2}
	\|f\|_{\ell^2}
	\leq C \| \hat{f} \|_{L^2}
	 = \sup_{v \in \Us^{1,2} \setminus \{0\}} \frac{\int_{\R^d} \hat{f} \cdot v \, dx}{ \|v \|_{L^2}}
	 \leq C \sup_{v\in\Us^{1,2} \setminus\{0\}}\frac{\int_{\R^d} \hat{f}(x)\cdot v(x)\dx}{\|\nabla v \|_{L^2}}
	 = C \| \nabla \phi_\a \|_{L^2}.
\end{equation}

Using also the norm-equivalence $\|\nabla v\|_{L^2} \eqsim \| D v \|_{\ell^2}$ this establishes
\begin{equation}
	\| \nabla \phi \|_{L^2}
	\eqsim \| \nabla \phi_\a \|_{L^2}
	\eqsim \| \hat{f} \|_{(\UsH)^*}
	:=
	\sup_{v\in\Us^{1,2} \setminus \{0\} }\frac{\int_{\R^d} \hat{f}(x)\cdot v(x)\dx}{\|Dv \|_{\ell^2}}.
\end{equation}

We have therefore reduced the statement to proving the equivalence between $\| \hat{f} \|_{(\UsH)^*}$ and $\| f \|_{(\UsH)^*} = \|\delta\E(\uH)\|_{(\UsH)^*}$.
The key observation is that we can interpret $f$ as a quadrature approximation to  $\hat{f}$. If $\mathcal{I}$ denotes the standard $\mathcal{P}_1$ nodal interpolation operator, then according to \eqref{eq:QM_force} we have
\begin{equation}
	\int_{\R^d} \mathcal{I}\big[ \hat{f} v \big] \dx
	= \sum_{\ell \in \Lambda}  \hat{f}(\ell) v(\ell) \int_{\R^d} \zeta_\ell(x)\dx
	= \sum_{\ell \in \Lambda}  c_\ell f(\ell) \zeta_\ell(\ell) {v}(\ell) \frac{1}{c_\ell}
	= \sum_{\ell \in \Lambda} f(\ell) {v}(\ell).
\end{equation}
(Note that this is the key step where the rescaling of the nodal interpolant $\hat{f}$ enters.)

Following from the standard quadratrue estimates, we can obtain
\begin{align}\label{eq:quadratrue}
	\bigg| \int_{\R^d} \hat{f} v \dx - \sum_{\ell \in \Lambda} f(\ell) v(\ell) \bigg| &=
	\bigg| \int_{\R^d} \hat{f} v - \mathcal{I}\big[ \hat{f} v \big] \dx\bigg| \nonumber\\
&\lesssim
	\| \nabla \hat{f} \|_{L^2} \| \nabla v \|_{L^2} \nonumber \\
	\nonumber
	&\lesssim
	\| \hat{f} \|_{L^2}  \| \nabla v \|_{L^2}  \\
	\nonumber
	&\lesssim
	\|f \|_{\ell^2} \| \nabla v \|_{L^2}.
\end{align}
So in summary, according to the last two inequalities and the norm-equivalence \eqref{eq:norm-eq}, we have shown that both
\begin{align*}
	\| f \|_{(\UsH)^*} &\lesssim \| \hat{f} \|_{(\UsH)^*} + \| \hat{f} \|_{L^2},
	\qquad \text{and}  \\
	\| \hat{f} \|_{(\UsH)^*}  &\lesssim \| f \|_{(\UsH)^*}  + \| f \|_{\ell^2}.
\end{align*}
In the first case, we use duality to bound $\| \hat{f} \|_{L^2} \lesssim \| \hat{f} \|_{(\UsH)^*}$; and in the second case we use duality to bound $\|f\|_{\ell^2} \lesssim \|f\|_{(\UsH)^*}$; cf. \eqref{eq:duality_bound_fL2}. Combing the resulting estimates we get the desired norm-equivalence.

\subsection{Numerical supplements}
\label{apped:ns}
\begin{figure}[H]
	\centering
	\subfigure[Micro-crack]{
		\label{fig:mcgeom}
		\includegraphics[height=4cm]{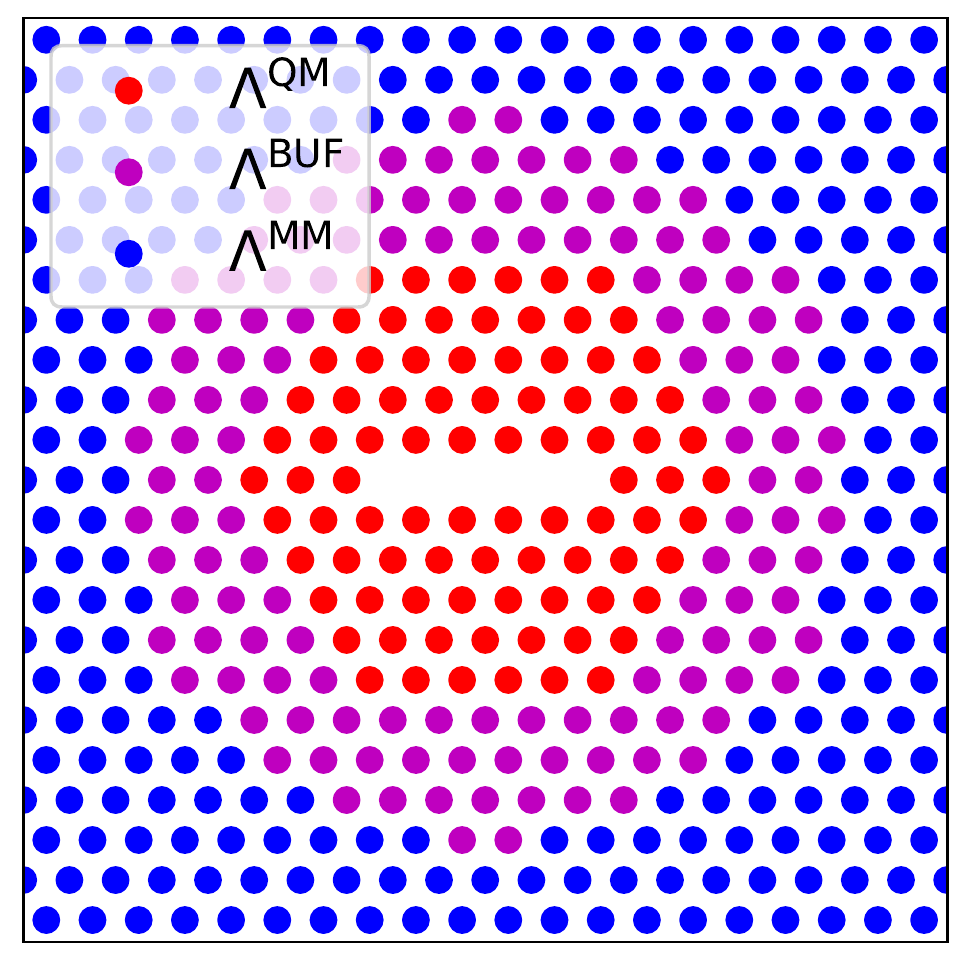}}
	\subfigure[Edge-dislocation]{
		\label{fig:edgeom}
		\includegraphics[height=4cm]{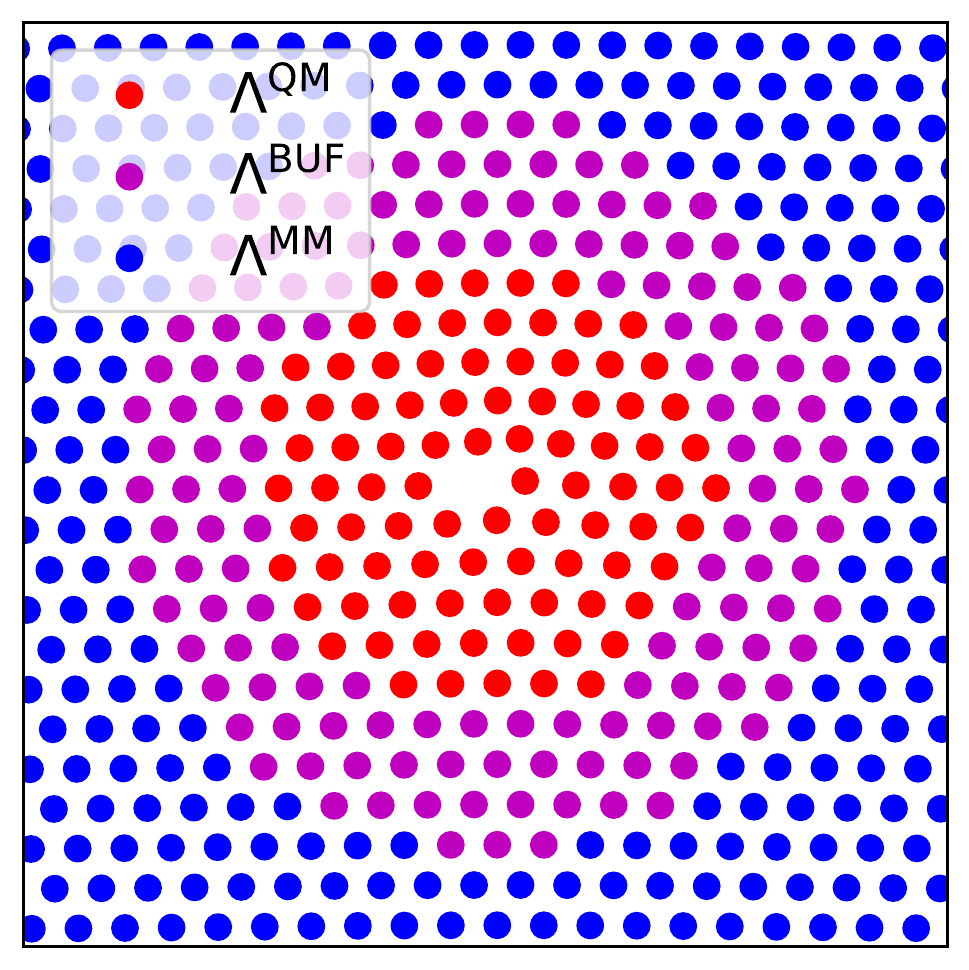}}
	\caption{QM/MM decompositions for micro-crack and edge dislocation examples.}
	\label{fig:qmmmgeom}
\end{figure}

\begin{figure}[H]
	\centering
	\subfigure[Point defect]{
		\label{fig:pdmesh-apd}
		\includegraphics[height=4cm]{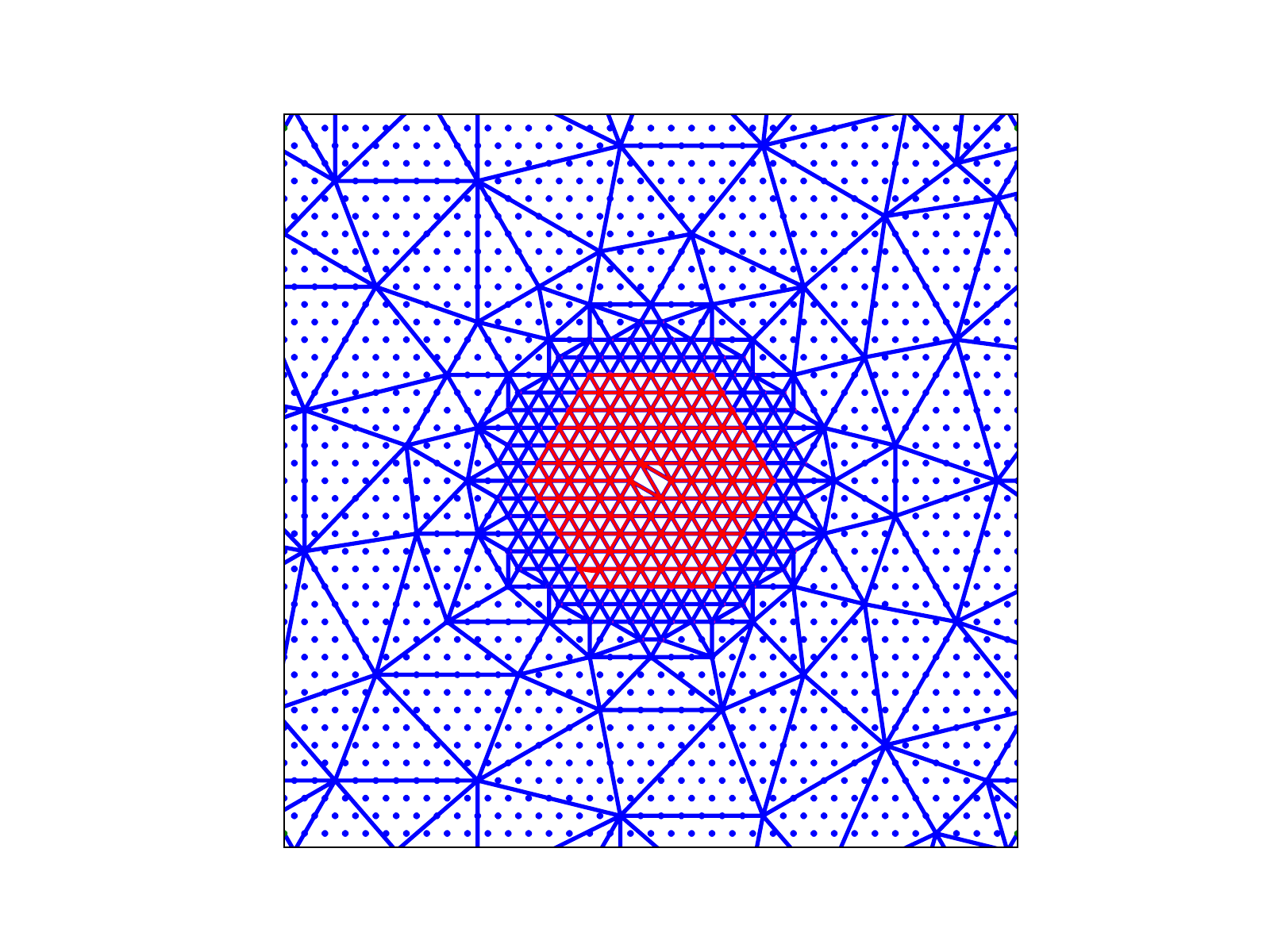}}
	\subfigure[Micro-crack]{
		\label{fig:mcmesh-apd}
		\includegraphics[height=4cm]{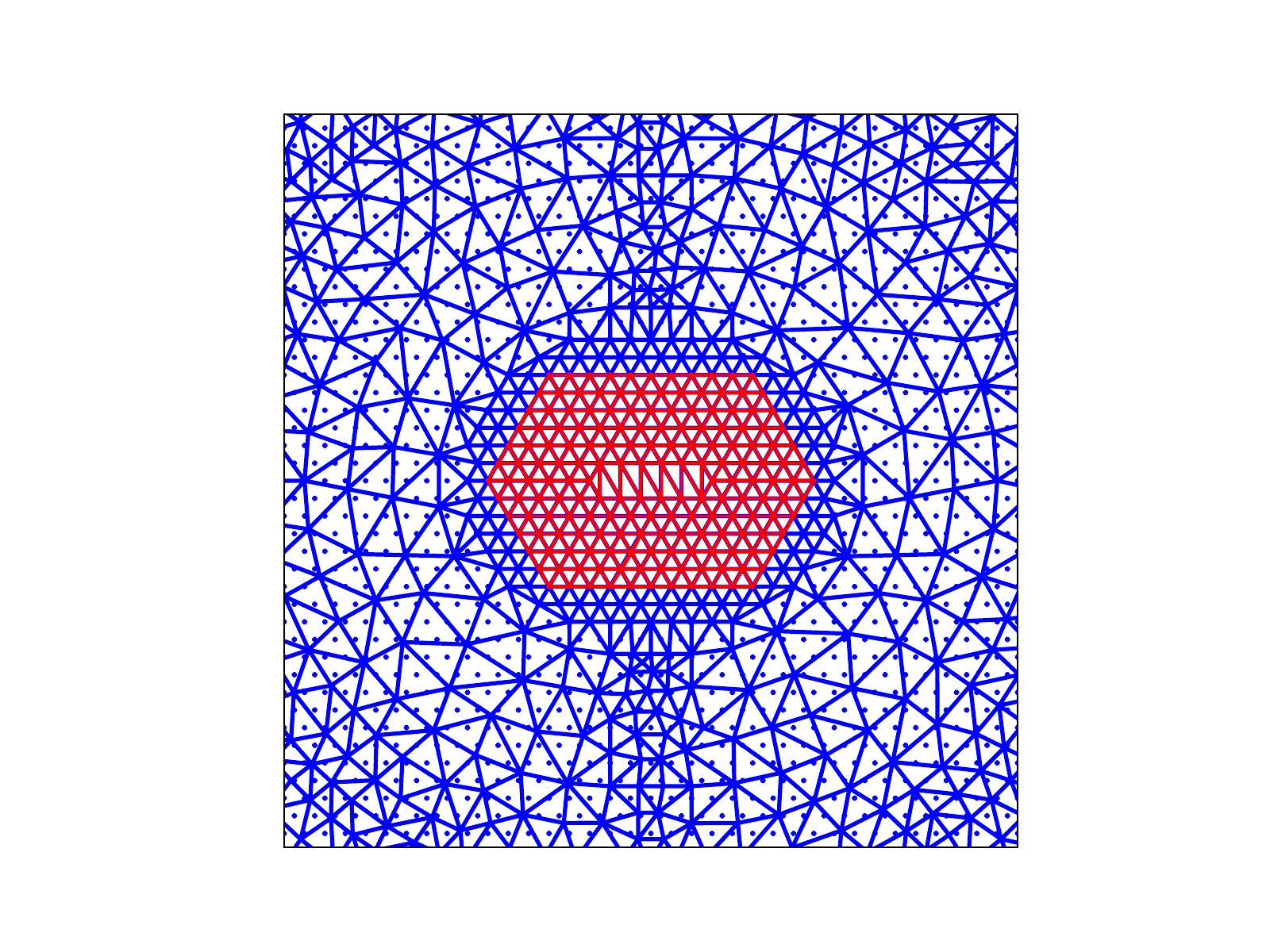}}
	\subfigure[Edge-dislocation]{
		\label{fig:edmesh-apd}
		\includegraphics[height=4cm]{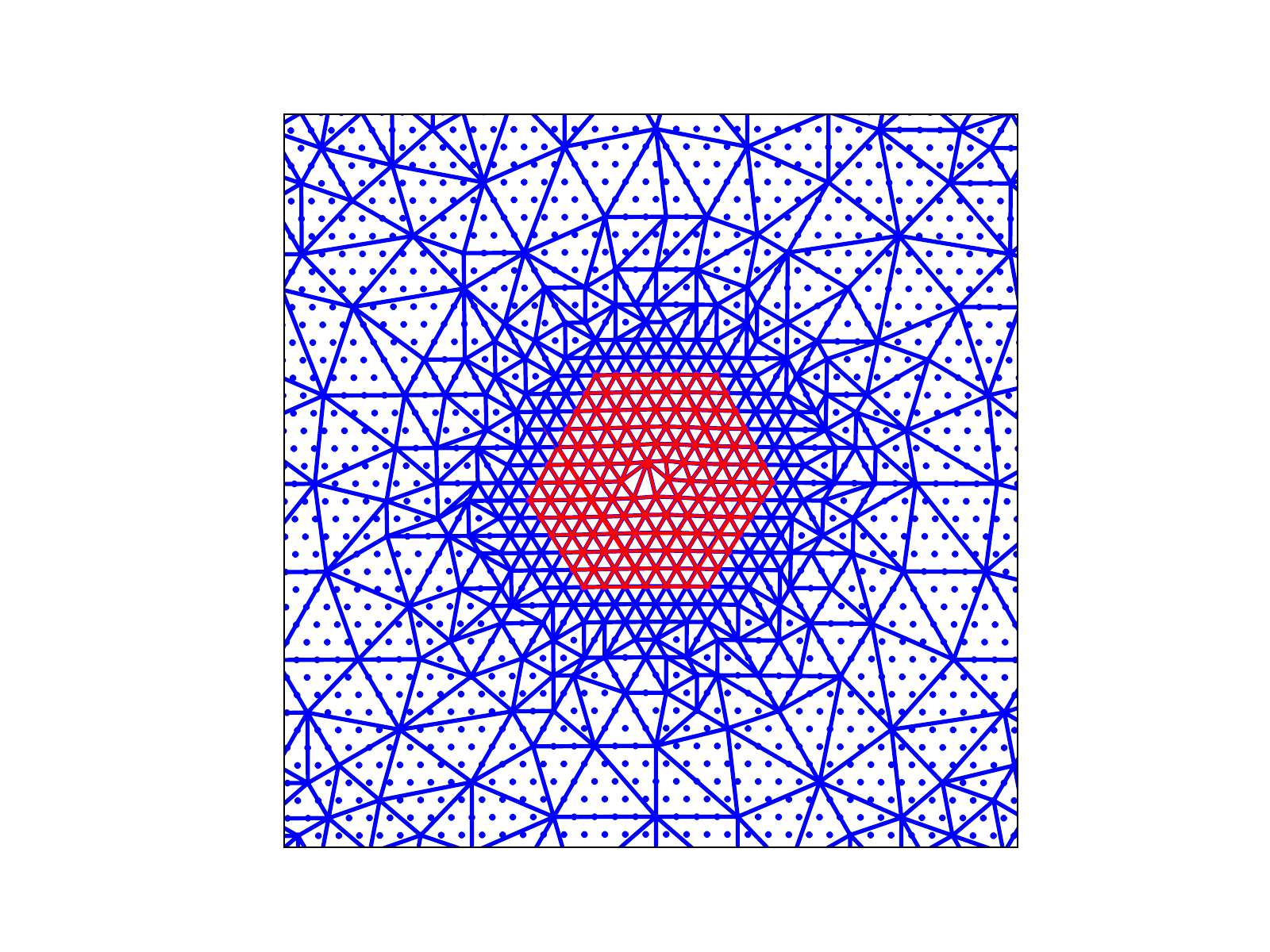}}
	\caption{Combined triangulation $\T^0$  constructed in  \S~\ref{sec:discrete} for three examples introduced in  \S~\ref{sec:numexp}.}
	\label{fig:qmmmmesh-apd}
\end{figure}

\begin{figure}[H]\label{fig:diseoet}
	\centering
	\subfigure[Relative error $\rho_{\h,\Omega}/\eta_\h$ with increasing $R_\Omega$ while fixing $\RQM$ and $\RMM$.]{
		\label{fig:distrun}
		\includegraphics[height=4.5cm]{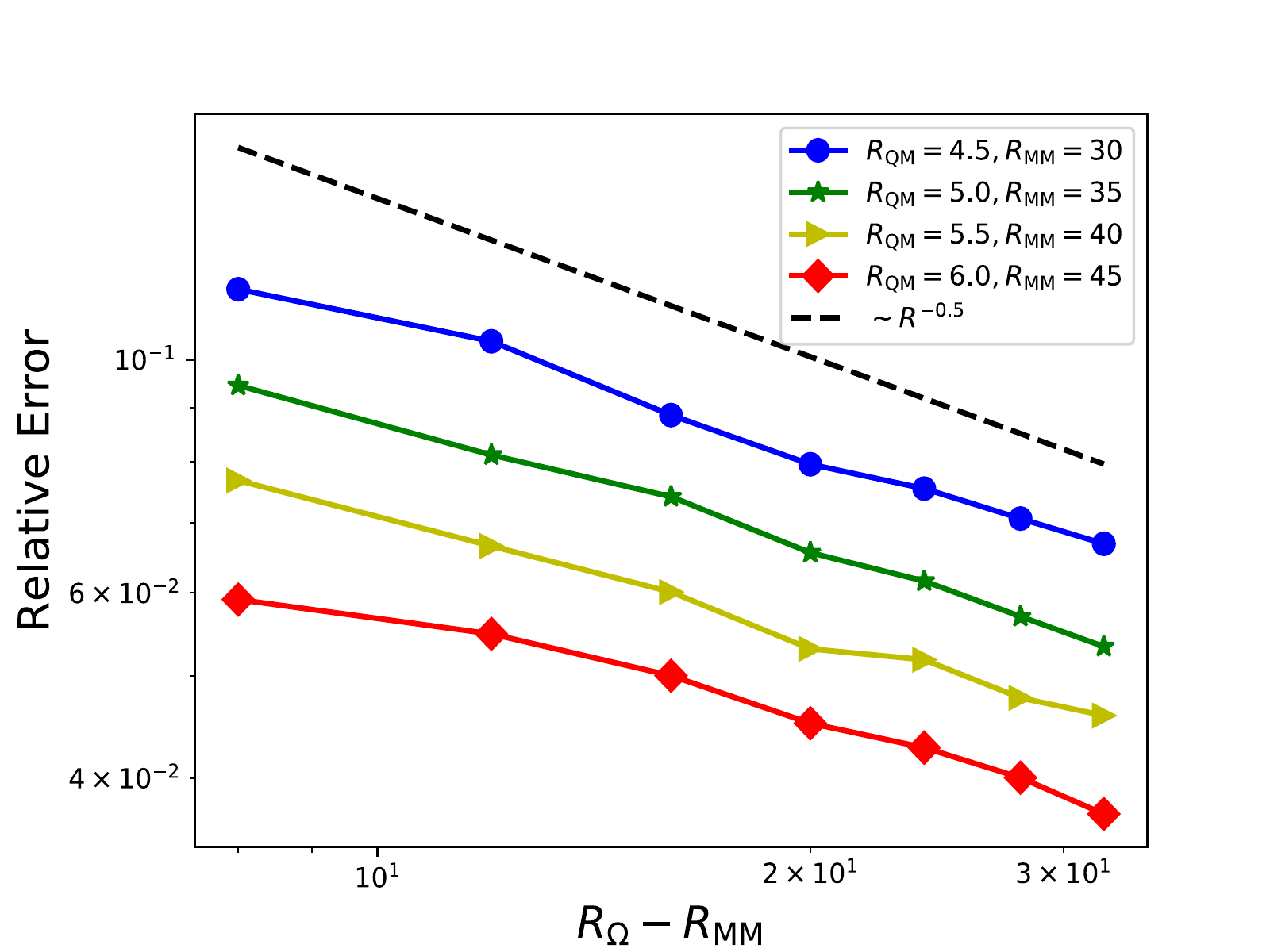}}
	\hspace{0.2cm}
	\subfigure[The values of residual indicator $\rho_{\h, \T}$, the estimator $\eta_{\h}$ and the data-oscillation during the mesh refinement.]{
		\label{fig:discg}
		\includegraphics[height=4.5cm]{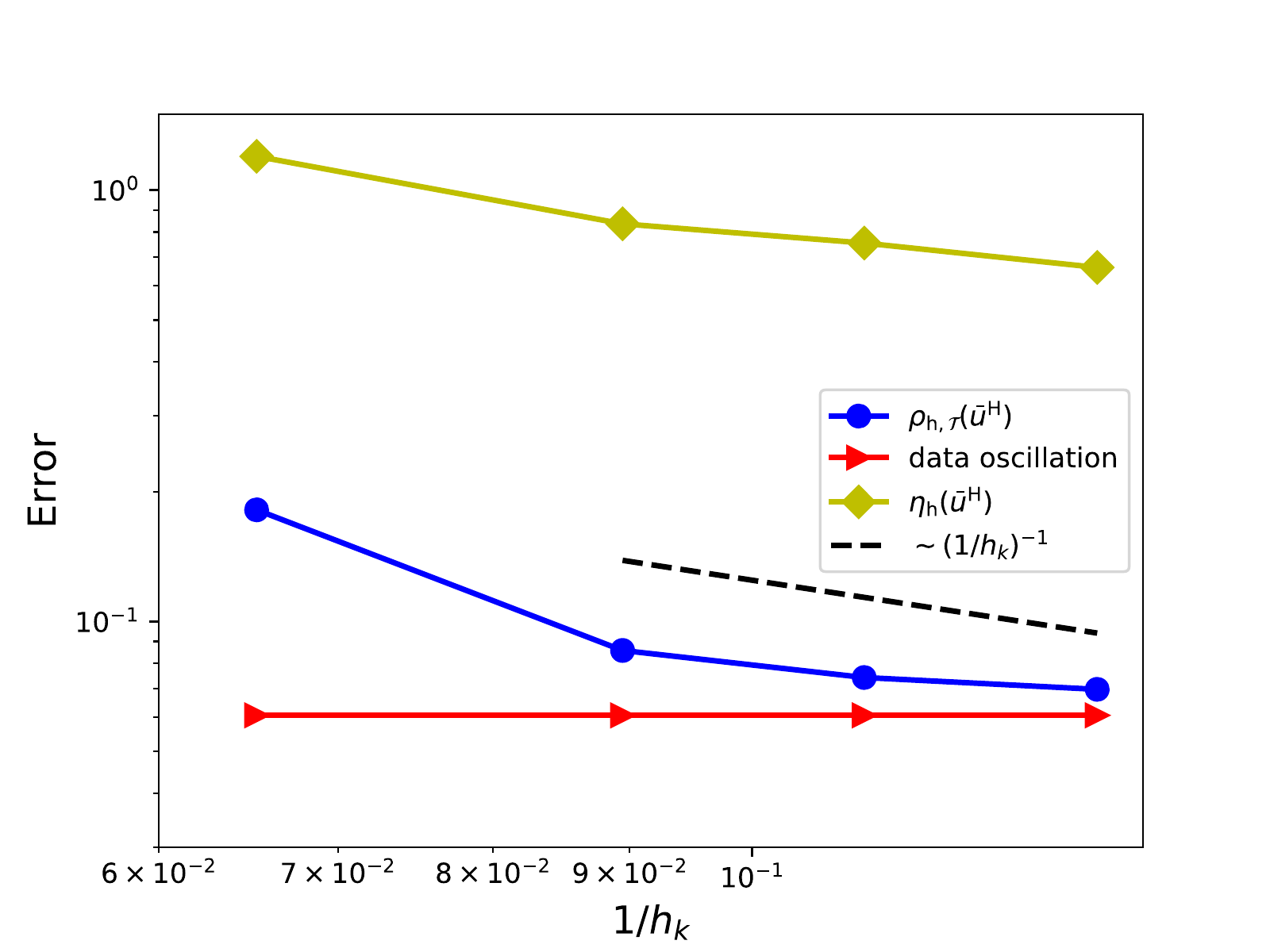}}
	\caption{(Edge-dislocation) Comparison of the truncation indicator, the residual indicator and the data-oscillation to the estimator.}
\end{figure}

We compare the truncation error, the discretization error, and the data oscillation within $\Omega$, with respect to error estimators $\eta_\h(\uH)$ for the edge dislocation. In Figure \ref{fig:distrun}, the truncation error decrease as $\RQM$ and $\RMM$ increase, and also as the width of the FF region $R_\Omega - R_{\rm MM}$ increase. We also note that the truncation error is relatively small compared with $\eta_\h(\uH)$ with sufficiently large $\RQM$ and $\RMM$. In Figure \ref{fig:discg}, we observe that, during the adaptive process of Algorithm \ref{alg:adaptMesh}, the discretization error decreases as we refine the mesh, and the part of data oscillation $\|\hat{f}-\hat{f}_{\T^0}\|_{(\dot{H}^1)^*(\Omega)}$ is relatively small if the initial mesh $\T^0$ is good enough.

\subsection{Proof of \bf Proposition \ref{prop:dataosc}}
\label{sec:dataosc_proof}
%
According to the definition of $\hat{f}_{\T^0}$ \eqref{eq:fhat_T0} and the fact $\int_{\R^d}\hat{f}\dx=\int_{\Omega}\hat{f}_{\T^0}\dx = 0$,  we have 
\begin{align}
\|\hat{f}-\hat{f}_{\T^0}\|_{(\dot{H}^1)^*} 
& = \sup_{\|\nabla v\|_{L^2}=1} \Big(\int_{\R^d \setminus \Omega}\hat{f} \cdot  (v-\bar{v}) \dx + \int_{\Omega}(\hat{f} - \hat{f}_{\mathcal{T}^0}) \cdot  (v-\bar{v}) \dx \Big) \\
& =:\sup_{\|\nabla v\|_{L^2}=1} \Big(T_1 + T_2\Big),
\end{align}
where $\bar{v}=\mint_{\Omega} v\dx$. We can estimate $T_1$ as
\begin{align}\label{eq:T_1}
 T_1 = \int_{\R^d \setminus \Omega}\hat{f} \cdot  (v-\bar{v}) \dx = \int_{\R^d \setminus \Omega}\omega(x)\hat{f} \cdot \omega^{-1}(x) (v-\bar{v}) \dx \lesssim \|\omega(x)\hat{f}\|_{L^2(\R^d \setminus \Omega)}\|\nabla v\|_{L^2}
\end{align}
with 
\begin{eqnarray}\label{assump}
\omega(x) = \left\{
\begin{array}{llll}
\displaystyle
|x|\ln |x|  & \text{if} ~~ d = 2,
\\[2ex]
\displaystyle
\sqrt{1+x^2}  & \text{if} ~~ d \geq 3.
\end{array} \right.
\end{eqnarray}
where  the Cauchy-Schwarz inequality and the weighted Poincar\'{e} inequality \cite[Corollary 16]{pauly2009functional} have been used. We first focus on the case $d\geq 3$  and the result for $d=2$ is very similar. According to the assumption \eqref{eq:decay}, we can further bound $T_1$ in \eqref{eq:T_1} by,
\begin{align}\label{eq:fwfR}
T_1\lesssim \|\omega(x) \hat{f} \|_{L^2(\R^d\setminus \Omega)}\|\nabla v\|_{L^2} \lesssim \RO^{-\alpha_1+d/2-1}\|\hat{f}\|_{(\dot{H}^1)^*}\|\nabla v\|_{L^2}.
\end{align}

We add and substract the constant $C_{\hat{f}_{\T^0}}$ into $T_2$ to have
\begin{align}
\label{eq:T_2}
T_2 = \int_{\Omega}(\hat{f}-\hat{f}_{\T^0})\cdot (v-\bar{v})\dx = - \int_{\Omega}C_{\hat{f}_{\T^0}}(v-\bar{v})\dx + \int_{\Omega}(\hat{f}-\hat{f}_{\T^0}+C_{\hat{f}_{\T^0}})\cdot (v-\bar{v})\dx. 
\end{align}
We note that the first term on the right hand side vanishes since $C_{\hat{f}_{\T^0}}$ is a constant and $\bar{v} = \mint_{\Omega} v\dx$. Substitute \eqref{eq:QM_force} and \eqref{eq:fhat_T0} into the second term, and denote the interpolant of $f_\ell$ on the finite element mesh $\T^0$ by $\hat{f}_{\N^0} := \sum_{\ell \in \N^0} f_{\ell}\zeta^{\rm c}_{\ell}(x)$, we split $T_2$ to two groups,
\begin{align*}
\int_{\Omega}(\hat{f}-\hat{f}_{\T^0}+C_{\hat{f}_{\T^0}})\cdot (v-\bar{v})\dx 
 = & \int_{\Omega} \Big(\hat{f} - \sum_{\ell \in \N^0} \tilde{f}_{\ell} \zeta^{\rm c}_{\ell}(x)\Big)\cdot (v-\bar{v}) \dx  \\
 = & \int_{\Omega}\Big( \sum_{\ell \in \N^0} (f_{\ell}-\tilde{f}_{\ell}) \zeta^{\rm c}_{\ell}(x)\Big)\cdot (v-\bar{v}) \dx \\
 & + \int_{\Omega} (\hat{f} - \hat{f}_{\N^0}) \cdot (v-\bar{v}) \dx \\
 =:& T_{\rm 21} + T_{\rm 22}.
\end{align*} 

As discussed in \S~\ref{sec:discrete}, 
$f_{\ell}-\tilde{f}_{\ell}$ decays exponentially with respect to $\rcut$ for each $\ell\in\N^0$. Using the Poincar\'{e} inequality,
we have
\begin{align}\label{eq:Tf}
\T_{21}  \lesssim & \int_{\Omega} \max_{x\in\Omega}(\#\{\ell\in\N^0~|~\zeta_{\ell}^c(x)\neq 0\}) \exp(-\eta_1\rcut) \cdot  (v-\bar{v})\dx \nonumber \\
 \lesssim & R_\Omega^{1+d/2}\exp(-\eta_1\rcut)\|\nabla v\|_{L^2},
\end{align}
where the overlapping number $\max_{x\in\Omega}(\#\{\ell\in\N^0~|~\zeta_{\ell}^c(x)\neq 0\})$ is bounded for a shape-regular $\T^0$.

We now turn our attention to $T_{22}$. By the Cauchy-Schwarz inequality and the weighted Poincar\'{e} inequality, can get
\begin{eqnarray}\label{eq:T22}
T_{22} =& \int_{\Omega}\omega(x)(\hat{f}-\hat{f}_{\N^0}) \cdot \omega^{-1}(x)(v-\bar{v})\dx \nonumber \\
\lesssim &\|\omega(x)(\hat{f}-\hat{f}_{\N^0})\|_{L^2(\Omega\setminus\Omega^{\rm QM})}\|\nabla v\|_{L^2(\Omega)},
\end{eqnarray}
where we use the fact that $\LQM\subset\N^0$ and $\zeta_{\ell}^c=c_\ell\zeta_{\ell}$ for $\ell\in\LQM$.

\def\IntpS{\mathcal{I}^{\rm S}}

To estimate the difference between interpolations of forces on the fine mesh $\T$ and the coarse mesh $\T^0$, we introduce the $C^{2}$-conforming interpolation $\check{f}$ and use the triangle inequality,
\begin{align}\label{eq:osc}
&\| \omega(x) (\hat{f} - \hat{f}_{\N^0})\|_{L^2(\Omega \backslash \Omega^{\rm QM})} \nonumber \\
 \lesssim & \| \omega(x) ( \-f - \check{f} ) \|_{L^2(\Omega \backslash \Omega^{\rm QM})} +
 \| \omega(x) ( \check{f} - \-f_{\N^0}) \|_{L^2(\Omega \backslash \Omega^{\rm QM})}\nonumber \\
  \lesssim &   \| \omega \nabla^2\check{f}\|_{L^2(\Omega \backslash \Omega^{\rm QM})} + \|\omega h_{0}^2 \nabla^2\check{f}  \|_{L^2(\Omega \backslash \Omega^{\rm QM})} \nonumber \\
  \lesssim & 
   \| \omega \check{f}\|_{\ell^2(\L^{\Omega}\setminus\LQM)} \lesssim  \Big(\RQM^{-\alpha_2+d/2-1} + \RMM^{-\alpha_1+d/2-1}\Big)\|\hat{f}\|_{(\dot{H}^1)^*},
\end{align}
where $h_{0}(x):= \mathrm{diam}(T), ~\text{for }~ x\in T\in \T^{0}$. We note that in the third inequality we use the assumptions \eqref{eq:mesh}, \eqref{eq:decay} and the $\ell^2/L^2$ norm equivalence.

Combining \eqref{eq:fwfR}, \eqref{eq:Tf}, \eqref{eq:osc}, and let $\beta_{i} = \alpha_{i} - d/2 - 1$ for $i=1,2$, we have
\begin{align*}
\|\hat{f}-\hat{f}_{\T^0}\|_{(\dot{H}^1)^*} \lesssim \Big(\RO^{-\beta_1}+ \RQM^{-\beta_2} + \RMM^{-\beta_1}\Big)\|\hat{f}\|_{(\dot{H}^1)^*}+R_\Omega^{1+d/2}\exp(-\eta_1\rcut).
\end{align*}
By analogous analysis, we could obtain the following estimate for $d=2$, 
\begin{align*}
\|\hat{f}-\hat{f}_{\T^0}\|_{(\dot{H}^1)^*} \lesssim \log\RO\Big(\RO^{-\beta_1}+\RQM^{-\beta_2} + \RMM^{-\beta_1}\Big)\|\hat{f}\|_{(\dot{H}^1)^*}+R_\Omega^{2}\exp(-\eta_1\rcut).
\end{align*}
which yields the stated result.

\subsection{Decay estimates of the residual forces}
\label{appdx:force_decay}

We mainly give a decay estimate of the residual force $f_{\ell}(\uH)$ for $\ell \in (\LMM \setminus \Lbuf) \setminus \LI$, where $\LI:=\{\ell\in\L~|~\RMM-\Rbuf\leq|\ell|\leq\RMM+\Rbuf\}$ is the MM/FF interface region. We also discuss about the weaker estimate on MM/FF interface region due to the loss of symmetry. We note that for $\ell \in \LQM \cup \Lbuf$ there is no need to discuss its decay estimate due to the construction of $\T^0$ introduced in \S~\ref{sec:discrete}, and for $\ell \in \LFF\setminus\LI$ the decay estimate has already been studied in \cite{chen19}.

Let $E_{\ell}$ be the site energy introduced in \S~\ref{sec:tb},  we define $V_{\ell} : (\R^d)^{\L-\ell}\rightarrow\R$ by,
\begin{eqnarray*} 
V_{\ell}\big(Du\big) :=  E_{\ell}(x_0 + u),
\end{eqnarray*}
where $x_0:\L\rightarrow\R^d$, $x_0(\ell)=\ell$ is the reference configuration. 

For simplicity of the following presentation, let $\mathcal{R^{\rm BUF}_{\ell}}:=B_{\Rbuf}(\ell)\cap\L$ and $\uu(\ell):=u_0(\ell)+\uH(\ell)$. By this definition, the residual force defined in \eqref{eq:forcenew} could be expressed in terms of QM site potentials $V_\ell$ for $\ell \in \L$
\begin{align*}
f_{\ell}(\uH) = \sum_{\rho\in\ell-\L} V_{\ell-\rho, \rho}\big(D\uu(\ell-\rho)) - \sum_{\rho\in\L-\ell} V_{\ell, \rho}\big(D\uu(\ell)\big).
\end{align*} 

For $\ell \in (\LMM \setminus \Lbuf) \setminus \LI$ belonging to the inner MM region, the force corresponding to QM/MM hybrid energy \eqref{eq:hybrid_energy} is defined by $f^{\rm H}_{\ell}(\uH) := \frac{ \partial \E^{\rm H}(u)}{\partial u(\ell)}\Big|_{u=\uH}$, which can also be written as
\begin{align*}
f_{\ell}^{\textrm{H}}(\uH) := \sum_{\rho\in\ell-\Rg} V^{\textrm{MM}}_{\ell-\rho,\rho}\big(D\uu(\ell-\rho)\big) - \sum_{\rho\in \Rg-\ell} V^{\textrm{MM}}_{\ell,\rho}\big(D\uu(\ell)\big),
\end{align*} 
where
\begin{eqnarray*}
V^{\rm MM}_{\ell}\big(Du\big)
:= \sum_{j=0}^k \frac{1}{j!} \delta^j V_{\ell}^{B_{\Rbuf}(\ell)}\big(0\big)[
	\underset{\text{$j$ times}}{\underbrace{Du, \dots, Du}} ] \quad 
\end{eqnarray*}
with $k\in\mathbb{N}$ the order of the Taylor expansion. For $\ell\in\LI$, we note that $f_{\ell}^{\rm H}(\uH)$ is defined similarly, the only difference is that there only exists the interactions from the MM region. We mention that for $\ell \in \LMM \setminus \Lbuf$, which is far away from the defect core, the site potential $V_{\ell}^{B_{\Rbuf}(\ell)}$ is nearly homogeneous due to Lemma \ref{lemma-thermodynamic-limit}, hence in the following we simply use the notation $V_{\ell}^{\rm BUF}$. 

We now give the decay estimate of $f_\ell(\uH)$ for $\ell$ belongs to inner MM region. 

\begin{proposition}\label{prop:decayforces}
If $k=2$ for point defects and $k=3$ for dislocations, then we have
\begin{enumerate}
\item Point defects: $d=2$ or $d=3$, $|f_{\ell}(\uH)| \eqsim (1+|\ell|)^{-3d}$ for $\ell \in \LMM$.
\item Dislocation: $d=3$, 
$|f_{\ell}(\uH)| \eqsim |\ell|^{-4}$ for $\ell \in \LMM$.
\end{enumerate}
\end{proposition}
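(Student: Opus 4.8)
The plan is to exploit that $\uH$ is an equilibrium of the hybrid energy, so that on the inner MM region the residual force reduces to a pure \emph{modelling error}. Since $\uH$ solves \eqref{problem-e-mix}, the hybrid force $f_\ell^{\rm H}(\uH)$ vanishes at every free site, in particular at every $\ell\in(\LMM\setminus\Lbuf)\setminus\LI$; there one therefore has $f_\ell(\uH)=f_\ell(\uH)-f_\ell^{\rm H}(\uH)$. Writing both forces through the site-potential gradients $V_{m,\rho}$ and $V^{\rm MM}_{m,\rho}$, this difference is a sum of bondwise differences of $W_{m,\rho}:=V_{m,\rho}-V^{\rm MM}_{m,\rho}$. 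First I would discard the difference between the full QM potential $V_\ell$ and its buffered truncation $V_\ell^{B_{\Rbuf}}$, which the locality estimate \eqref{site-locality-tdl} of Lemma~\ref{lemma-thermodynamic-limit} bounds by $\exp(-\eta_1\Rbuf)$ and which is therefore negligible in the inner MM region. What survives is the Taylor remainder of the construction \eqref{taylor}: its leading bondwise contribution is of order $|D\uu|^{k}$, coming from $\delta^{k+1}V_\ell^{B_{\Rbuf}}(0)$.

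Next I would use the stress identity \eqref{eq:stress_force}, which exhibits $f_\ell(\uH)$ as a discrete divergence of the bondwise errors; telescoping $\sum_\rho(W_{\ell-\rho,\rho}-W_{\ell,\rho})$ then gains one lattice derivative, giving the generic bound $|f_\ell(\uH)|\lesssim|D\uu(\ell)|^{k-1}\,|D^2\uu(\ell)|$ up to the exponentially small buffer term. For the \emph{dislocation} this generic estimate is already sharp: inserting the explicit predictor decay $|D^j u_0(\ell)|\eqsim|\ell|^{-j}$ from \cite{chen19,2013-defects}, which dominates the faster-decaying corrector under the standing assumption $|D^j\uH|\eqsim|D^j\bar u|$, yields $|D u_0|^{k-1}|D^2 u_0|\eqsim|\ell|^{-k-1}=|\ell|^{-4}$ for $k=3$. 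The matching lower bound $\gtrsim$ follows once one checks that the relevant anharmonic coefficient $\delta^{k+1}V_\ell(0)$, contracted against the predictor field, is non-degenerate, which disposes of the dislocation case.

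The point-defect case $u_0=0$ is the delicate one and, I expect, the main obstacle. Here the MM deformation is the corrector alone, with $|D^j\bar u(\ell)|\eqsim|\ell|^{-d-j+1}$, and the generic divergence estimate above only produces $|\ell|^{-(2d+1)}$, which for $d\geq2$ is \emph{weaker} than the asserted $|\ell|^{-3d}$. Obtaining the sharp rate forces one to show that the leading, scale-homogeneous part of the modelling-error force actually cancels; this is precisely where the inversion symmetry of the homogeneous lattice about each inner-MM site must be used, pairing the bonds $\rho$ and $-\rho$ in the telescoped sum so that the odd leading contributions drop out, in the same spirit as the gain of accuracy of a symmetric finite-difference stencil. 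The hard part will be the bookkeeping of exactly which contributions survive under this symmetry, combined with the self-similar multipole structure of the point-defect field (so that $D\bar u$ behaves like $\nabla^2\mathcal{G}\!:\!P$ for the lattice Green's function $\mathcal{G}$ and a dipole tensor $P$), and then verifying that what remains is genuinely of size $|\ell|^{-3d}$ --- both as an upper bound and, for the lower bound $\gtrsim$, as a non-vanishing leading term. Consistently with this, the loss of inversion symmetry across the MM/FF interface $\LI$ is exactly what degrades the estimate there, as anticipated in the Remark above and visible in \cite[Figure 4(b)]{CMAME}.
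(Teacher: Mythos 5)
Your opening moves coincide exactly with the paper's proof: both exploit that $f^{\rm H}_{\ell}(\uH)=0$ at interior MM sites to write $f_{\ell}(\uH)=f_{\ell}(\uH)-f^{\rm H}_{\ell}(\uH)$, and both discard the buffer--truncation error $f_{\ell}-\tilde f_{\ell}$ via the locality estimate \eqref{site-locality-tdl}, at cost $e^{-\eta \Rbuf}$. After that the routes diverge. The paper does \emph{not} telescope and invokes no symmetry: in \eqref{eq:forcediff} it writes the remaining difference $\tilde f_{\ell}-f^{\rm H}_{\ell}$ directly as sums of terms
$\frac{1}{(k+1)!}\sum_{\rho}\sum_{\sigma_1,\dots,\sigma_{k+1}}V^{\rm BUF}_{\ell,\rho\sigma_1\cdots\sigma_{k+1}}(0)\prod_{j=1}^{k+1}D_{\sigma_j}w(\ell)$
(and the analogous terms at $\ell-\rho$), with $w:=u_0+\uH$, i.e.\ with $k+1$ factors of $Dw$ and $(k+2)$-nd order coefficients, and then bounds these by Cauchy--Schwarz to obtain \eqref{eq:inner}, $|f_{\ell}|\lesssim e^{-\eta\Rbuf}+|Dw(\ell)|_{\gamma}^{k+1}$; inserting the decay of $D\bar u$ and $Du_0$ gives both stated rates at once. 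Your expansion instead produces the standard force-level remainder $\frac{1}{k!}\,\delta^{k+1}V^{\rm BUF}_{\ell}(\theta Dw)\big[(Dw)^{k},e_{\rho}\big]$ --- $k$ factors, not $k+1$ --- since the gradient of the degree-$k$ polynomial \eqref{taylor} is the degree-$(k-1)$ polynomial of the gradient. This term is absent from \eqref{eq:forcediff}, and the paper offers no argument for dropping it; your difficulty is located exactly at that discrepancy.

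Precisely because you keep that term, your argument has a genuine gap in case (1), as you yourself concede: telescoping yields only $|f_{\ell}|\lesssim|Dw|^{k-1}|D^{2}w|\eqsim|\ell|^{-2d-1}$ for point defects ($k=2$, $u_0=0$), short of $(1+|\ell|)^{-3d}$. Moreover, the rescue you propose --- pairing $\rho$ with $-\rho$ so that ``odd leading contributions drop out'' --- fails as described. The telescoped leading term is $-\sum_{\rho}\frac{1}{(k-1)!}\delta^{k+1}V(0)\big[(Dw)^{k-1},(\rho\cdot\nabla)Dw,e_{\rho}\big]$; substituting $D_{\sigma}w\approx\nabla w\,\sigma$ and $(\rho\cdot\nabla)D_{\sigma}w\approx\rho^{\top}\nabla^{2}w\,\sigma$, the substitution $(\sigma_1,\dots,\sigma_k,\rho)\mapsto(-\sigma_1,\dots,-\sigma_k,-\rho)$ multiplies the coefficient by $(-1)^{k+1}$, the $k-1$ gradient factors by $(-1)^{k-1}$, and leaves the Hessian factor unchanged, so each $\pm\rho$ pair \emph{adds} rather than cancels. (Inversion symmetry does annihilate the single-site sum $\sum_{\rho}\delta^{k+1}V(0)[(Dw)^{k},e_{\rho}]$ to leading order, because $Dw$ is odd to leading order, but what survives is again $O(|Dw|^{k-1}|D^{2}w|)$ --- the order you already had.) For $d=3$ the claimed $|\ell|^{-9}$ would even require cancellation at two consecutive orders, for which you supply no mechanism. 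Finally, the two-sided bound $\eqsim$ asserted in the statement is not established by your argument in either case: the dislocation ``non-degeneracy check'' is only named, never performed. In summary, your dislocation upper bound does go through (there $|D^{2}u_{0}|\eqsim|Du_{0}|^{2}$, so $|Dw|^{k-1}|D^{2}w|\eqsim|\ell|^{-4}$ happens to coincide with the stated rate), but the point-defect rate and both lower bounds remain unproven, so the proposal does not prove the proposition.
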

\begin{proof}
Observing that $f^{\rm H}_{\ell}(\uH)=0$ for each $\ell \in \LMM\setminus \Lbuf$ and recalling the definition of $\tilde{f}_{\ell}$ in \eqref{eq:ftilde}, for $\bm{\sigma}=(\sigma_1, ..., \sigma_{k+1})$, we obtain
\begin{align}\label{eq:forcediff}
f_{\ell}(\uH) - f_{\ell}^{\rm H}(\uH)  = & f_{\ell}(\uH)- \tilde{f}_{\ell}(\uH) + \tilde{f}_{\ell}(\uH) - f_{\ell}^{\rm H}(\uH) \nonumber \\
 = & f_{\ell}(\uH)- \tilde{f}_{\ell}(\uH) + \frac{1}{(k+1)!}\sum_{\rho\in \Rg-\ell}\sum_{\bm{\sigma} \in (\Rg-\ell)^{k+1}}V^{\rm BUF}_{\ell,\rho \bm{\sigma}}\big(0\big)\prod^{k+1}_{j=1} D_{\sigma_j}\uu(\ell)  \nonumber \\
 & + \frac{1}{(k+1)!}\sum_{\rho \in\ell - \Rg} \sum_{\bm{\sigma} \in (\ell - \Rg)^{k+1}} V^{\rm BUF}_{\ell-\rho, \rho\bm{\sigma}}\big(0\big) \prod^{k+1}_{j=1} D_{\sigma_j}\uu(\ell-\rho)
 \nonumber \\
 =:& F_1 + F_2 + F_3.
\end{align} 

 Lemma \ref{lemma-thermodynamic-limit} leads to,
 \begin{align*}
 \big|F_1\big| \lesssim& e^{-\eta \Rbuf}.
\end{align*}

For $\ell$ belongs to the inner MM region, i.e., $\ell \in (\LMM \setminus \Lbuf) \setminus \LI$, combining Cauchy-Schwarz inequality and \eqref{eq:forcediff}, we have
\begin{align*}
 \big|F_2\big| \lesssim& 
 \prod^{k+1}_{j=1} \Big(\sum_{\sigma_j \in \Rg-\ell} e^{-2\gamma|\sigma_j|}\big|D_{\sigma_j}\uu(\ell)\big|^2\Big)^{1/2} = \big|D\uu(\ell)\big|^{k+1}_\gamma.
\end{align*}

The term $F_3$ can be estimated similarly,
hence for $\ell \in (\LMM \setminus \Lbuf) \setminus \LI$, we have
\begin{align}\label{eq:inner}
\big| f_{\ell}(\uH) \big| 
 \lesssim & e^{-\eta \Rbuf} + 
 \big|D\uu(\ell)\big|^{k+1}_\gamma.
\end{align} 

We now discuss about the decay estimate for $\ell \in \LI$. 
Due to the loss of symmetry, we note that the decay estimate is weaker than that inside MM region. For example, for $\ell \in \LI \cap \LMM$, if there exists a $\sigma_j \in \Rg-\ell$ such that $\ell + \sigma_j \in \LFF$, then the term $D_{\sigma_j}\uu(\ell)$ will becomes to $D_{\sigma_j}u_0(\ell)-\uH(\ell)$. 
Hence, similar to the estimate for inner MM region, for $\ell \in \LI$, we have
\begin{align*}
\big| f_{\ell}(\uH) \big| 
 \lesssim & e^{-\eta \Rbuf} +  \big(\big|Du_0(\ell)\big|^{k+1}_\gamma+\big|\uH(\ell)\big|^{k+1}\big).
 \end{align*} 
Indeed we have grossly overestimated here, but there is no advantage in a sharper estimate since we do not really care about the decay estimate on MM/FF interface.

Using the generic decay estimate of $\bar{u}$ and $u_0$ in \cite{chen19}, $|D\bar{u}(\ell)|_\gamma \lesssim (1+|\ell|)^{-d}\log^t(2+|\ell|)$, $t=0$ for point defects, $t=1$ for dislocations and $|Du_0(\ell)| \lesssim |\ell|^{-1}$, together with an additional assumption that $u$ and $\uH$ have the same decay rates, namely $|D^j\uH(\ell)|_\gamma \eqsim |D^j\bar{u}(\ell)|_\gamma$ for $j=0,1$, we derive the stated results by \eqref{eq:inner}.
\end{proof}

\bibliographystyle{plain}
\bibliography{posterioriQMMM.bib}
\end{document}